\documentclass[reqno,11pt]{amsart}
\usepackage{amscd,amssymb,verbatim}
\usepackage{enumitem}
\usepackage{marginnote}
\usepackage[nobysame]{amsrefs}
\usepackage{hyperref}
\usepackage[scr=rsfso]{mathalfa}
\usepackage{mathrsfs}  \usepackage{bbm}
\usepackage{xcolor}
\usepackage{slashed}
\usepackage{graphicx}

\DeclareFontFamily{U}{BOONDOX-calo}{\skewchar\font=45 }
\DeclareFontShape{U}{BOONDOX-calo}{m}{n}{
  <-> s*[1.05] BOONDOX-r-calo}{}
\DeclareFontShape{U}{BOONDOX-calo}{b}{n}{
  <-> s*[1.05] BOONDOX-b-calo}{}
\DeclareMathAlphabet{\mathcalboondox}{U}{BOONDOX-calo}{m}{n}
\SetMathAlphabet{\mathcalboondox}{bold}{U}{BOONDOX-calo}{b}{n}
\DeclareMathAlphabet{\mathbcalboondox}{U}{BOONDOX-calo}{b}{n}

\setlength{\textwidth}{6.3in}
 \addtolength{\oddsidemargin}{-1.7cm}
\addtolength{\evensidemargin}{-1.7cm}

\numberwithin{equation}{section}

\theoremstyle{plain}
\newtheorem{theorem}{Theorem}[section]
\newtheorem{lemma}[theorem]{Lemma}
\newtheorem{corollary}[theorem]{Corollary}
\newtheorem{proposition}[theorem]{Proposition}

\newtheorem{assumption}[theorem]{Assumption}

\theoremstyle{definition}
\newtheorem{definition}[theorem]{Definition}

\theoremstyle{remark}
\newtheorem{remark}[theorem]{Remark}

\newcommand{\IM}{\mathrm{Im}}
\newcommand{\RR}{\mathbb{R}}
\newcommand{\CC}{\mathbb{C}}

\newcommand{\ZZ}{\mathbb{Z}}
\renewcommand{\>}{\rangle}
\newcommand{\<}{\langle}

\newcommand{\ind}{\operatorname{ind}}

\newcommand{\End}{\operatorname{End}}

\newcommand{\p}{\partial}

\newcommand{\calB}{\mathscr{B}}

\newcommand{\calA}{\mathscr{A}}

\newcommand{\calW}{\mathscr{W}}\newcommand{\calH}{\mathscr{H}}
\newcommand{\calWp}{\mathscr{W}_{[-\pi,\pi]}}\newcommand{\calHp}{\mathscr{H}_{[-\pi,\pi]}}

\newcommand{\At}{\calA_\tau}
\newcommand{\calS}{\mathscr{S}}
\newcommand{\HH}{\mathcal{H}}

\newcommand{\D}{\mathscr{D}}\newcommand{\B}{\mathscr{B}}
\newcommand{\M}{\mathscr{M}}\newcommand{\N}{\mathscr{N}}
\newcommand{\E}{\mathscr{E}}\newcommand{\F}{\mathscr{F}}
\newcommand{\MM}{\mathscr{M}}

\renewcommand{\c}{\mathcalboondox{c}}
\newcommand{\spf}{\operatorname{sf}}
\newcommand{\sign}{\operatorname{sign}}
\newcommand{\rank}{\operatorname{rank}}
\newcommand{\ID}{\operatorname{Id}}
\newcommand{\Herm}{\operatorname{Herm}}

\newcommand{\f}{\mathcalboondox{f}}

\newcommand{\oM}{\overline{M}}

\newcommand{\Ib}{\mathcal{I}_{{\rm Bulk},\tau}}
\newcommand{\Ie}{\mathcal{I}_{{\rm Edge},\tau}}
\renewcommand{\b}{*}
\newcommand{\odd}{{\operatorname{odd}}}
\newcommand{\even}{{\operatorname{even}}}

\newcommand{\n}{\nabla}


\setcounter{tocdepth}{1}

\begin{document}

\title{The $\ZZ_2$-valued spectral flow of a symmetric family of Toeplitz operators}
\author{Maxim Braverman} 
\address{Department of Mathematics,
        Northeastern University,
        Boston, MA 02115,
        USA
         }
\email{maximbraverman@gmail.com}
\author{Ahmad Reza Haj Saeedi Sadegh }
    \address{Department of Mathematics, Dartmouth College, Hanover, NH 03755,
USA}
\email{arhsaeedi@gmail.com}

\begin{abstract}
We consider families $A(t)$ of self-adjoint operators with symmetry that causes the spectral flow of the family to vanish. We study the secondary $\mathbb{Z}_2$-valued spectral flow of such families. We prove an analog of the Atiyah-Patodi-Singer-Robbin-Salamon theorem, showing that this secondary spectral flow of $A(t)$ is equal to the secondary $\mathbb{Z}_2$-valued index of the suspension operator $\frac{d}{dt}+A(t)$.

Applying this result, we show that the graded secondary spectral flow of a symmetric family of Toeplitz operators on a complete Riemannian manifold equals the secondary index of a certain Callias-type operator. In the case of a pseudo-convex domain, this leads to an odd version of the secondary Boutet de Monvel's index theorem for Toeplitz operators. When this domain is simply a unit disc in the complex plane, we recover the bulk-edge correspondence for the Graf-Porta module for 2D topological insulators of type AII.
\end{abstract}

\subjclass[2020]{58J20,58J22,19K56,32T15,58Z05}
\keywords{Spectral flow, index theory, quaternionic bundle, odd symmetric operator, topological Insulators, bulk-edge correspondence, Graf-Porta model}   

\maketitle
\section{Introduction}\label{S:introduction}

Atiyah, Patodi, and Singer \cite{APS3} showed that the spectral flow of a periodic path $A=A(t)$ of self-adjoint Fredholm operators on a Hilbert space $H$ is equal to the index of the ``suspension" operator $D_A:= \frac{d}{dt}+A(t)$. This result was extended to non-periodic families by Robbin and Salamon \cite{RobbinSalamon95}. 

In this paper, we assume that there is an anti-linear, anti-unitary, anti-involution $\alpha: H\to H$, $\alpha^2=-1$, $\alpha^*=\alpha$, and define a transformation $\tau$ of paths of operators by  $\tau: A(t)\to \alpha A(-t) \alpha^{-1}$. If the path $A(t)$ is $\tau$-invariant, then its spectral flow vanishes.  However, the secondary $\ZZ_2$-valued invariant called the {\em half-spectral flow} is defined, cf. \cites{DollSB21,DeNittisSB15}. In this situation, the suspension operator $D_A$ commutes with $\tau$. Such an operator is called {\em odd symmetric} and its index vanishes because of the symmetry. But a secondary $\ZZ_2$-valued index $\ind_\tau D_A$, called the {\em $\tau$-index},  is defined, \cites{Schulz-Baldes15,DeNittisSB15,DollSB21,
BrSaeedi24index}.

The first main result of our paper is the analog of the Atiyah-Patodi-Singer-Robbin-Salamon theorem for half-spectral flow: 
\begin{equation}\label{E:Ispf=indDA}
        \spf_\tau A \ = \ \ind_\tau D_A.
\end{equation}
Note that recently a different version of the  Robin-Salamon-type theorem for the $\ZZ_2$-valued spectral flow was obtained in \cite{BourneCareyLeschRennie22}. In this paper, the authors considered paths of skew-adjoint Fredholm operators on a real vector space. The main difference with our result is that in \cite{BourneCareyLeschRennie22} every operator $A(t)$ possesses a symmetry (it is skew-adjoint), while we consider the case when the symmetry relates $A(t)$ with $A(-t)$. We discuss the relationship between our result and \cite{BourneCareyLeschRennie22} in Section~\ref{SS:compareBCLR}.

Next, we consider a complete Riemannian manifold $M$ with involution $\theta^M: M\to M$. Let $E=E^+\oplus E^-$ be a graded Clifford bundle over $M$ endowed with an anti-unitary anti-involution $\theta^E$ covering $\theta^M$. We say that $E$ is a {\em graded quaternionic Clifford bundle}. Then the corresponding Dirac operator $D$  is odd symmetric.  We denote by $\HH=\HH^+\oplus \HH^-$ the kernel of $D$ and by $P$ the orthogonal projection onto $\HH$. Typically, it is an infinite dimensional space. 

Given a family $f_t$ of self-adjoint matrix-valued functions on $M$, parametrized by a circle $t\in S^1$, we denote by $M_{f_t}$ the operator of multiplication by $f_t$ and define a family of Toeplitz operators 
\[
    T_{f_t}\ := \ PM_{f_t}P:\,\HH\ \to \ \HH, \qquad t\in S^1.
\]
Let $T_{f_t}^\pm$ denote the restriction of $T_{f_t}$ to $\mathcal{H}^\pm$. Under suitable assumptions on $D$ and $f_t$, cf. Section~\ref{S:sfToeplitz}, the operators $T_{f_t}$ are Fredholm. We assume that the family $f_t$ is odd symmetric. Then so is the family $T_{f_t}$. 

The Dirac operator $D$ induces an ungraded Dirac operator $\D$ on the manifold $\M:=S^1\times M$. For large enough constant $c$, the operator 
\[
    \B_{cf}\ := \ \D\  + \ ic\M_{f_t}
\]
is an odd symmetric Callias-type operator. The $\tau$-index of such operators was investigated in \cite{BrSaeedi24index}. We show that the {\em graded half-spectral flow} 
\begin{equation}\label{E:IsfT=indB}
  \spf_\tau T_{f_t}^+ \ - \  \spf_\tau T_{f_t}^- \ = \ \ind_\tau \B_{cf}.
\end{equation}

In the case when $M$ is a pseudo-convex domain in $\CC^n$ with smooth boundary $N$, it follows from \cite[\S5]{DonnellyFefferman83} that $\HH^-=\{0\}$. Hence, the above formula computes the half-spectral flow of $T_{f_t}^+$. Applying the Callias index theorem for $\tau$-index, which was obtained in our previous paper \cite{BrSaeedi24index}, we obtain an explicit formula for a graded Dirac operator $\D_\N$ on $\N:=S^1\times N$, such that 
\begin{equation}\label{E:generalized bulk-edge}
        \spf_\tau T_{f_t}^+ \ = \  \ind_\tau \D_\N^+.
\end{equation}
(Note that $\spf_\tau T_{f_t}^-=0$ since $\HH^-=\{0\}$).
We call this result the {\em generalized bulk-edge correspondence} because if $M=\{z\in \CC:\, |z|\le1\}$ is the unit disc in $\CC$ this is equivalent to the bulk-edge correspondence for the Graf-Porta model for 2D topological insulators with time-reversal symmetry, \cite{GrafPorta13}. In Section~\ref{S:Graf-Porta}, we use \eqref{E:generalized bulk-edge} to obtain a new proof of the bulk-edge correspondence in the Graf-Porta model.

The paper is organized as follows:

In Section~\ref{S:tauindex}, we recall the main facts about the $\tau$-index of an odd symmetric operator from \cites{Schulz-Baldes15,DeNittisSB15,DollSB21,BrSaeedi24index}. 

In Section~\ref{S:spaces}, we recall the main steps of the Robbin-Salamon proof of the equality between the spectral flow of $A(t)$ and the index of $D_A$. The notation introduced in this section is used in the subsequent sections. 

In Section~\ref{S:tauabsractflow}, we study $\tau$-invariant paths $A(t)$ of self-adjoint Fredholm operators parametrized by $t\in \RR$. We define their half-spectral flow and show that it has many properties similar to the usual $\ZZ$-valued spectral flow. The main result of this section is the equality \eqref{E:Ispf=indDA} between the half-spectral flow of $A(t)$ and the $\tau$-index of $D_A$. The proofs of many statements in this section are postponed to Section~\ref{S:proofs of three theorems}. 

In Section~\ref{S:APScircle}, we show that the equality \eqref{E:Ispf=indDA} also holds for periodic paths, i.e. for paths parametrized by a circle. 

In Section~\ref{S:sfToeplitz}, we study families of Toeplitz operators on complete Riemannian manifolds $M$ associated with a $\tau$-invariant family of functions $f_t:M\to \Herm(k)$ with values in the space $\Herm(k)$ of Hermitian $k\times k$-matrices. We compute the graded spectral flow of such families, \eqref{E:Ispf=indDA}. The proof of the main result is postponed to Section~\ref{S:prsfToeplitz=Callias}.

In Section~\ref{S:evendim}, we consider the case when the dimension of $M$ is even and give a more explicit formula for the right-hand side of \eqref{E:Ispf=indDA} using the $\ZZ_2$-version of the Callias index theorem. When $M$ is a pseudo-convex domain in $\CC^n$ the formula is especially nice and we call it the {\em generalized bulk-edge correspondence}.

In Section~\ref{S:Graf-Porta}, we recall the Graf-Porta model for topological insulators of type AII and show that the result of the previous section implies the equality between the bulk and the edge index in this model. 

In Section~\ref{S:prsfToeplitz=Callias}, we prove the main result of Section~\ref{S:sfToeplitz}.

\section{The $\tau$-index}\label{S:tauindex}

In this section, we recall the construction of the secondary $\ZZ_2$-valued index of odd symmetric operators  \cites{Schulz-Baldes15,DeNittisSB15,DollSB21}.  We work with unbounded operators and recall some results about the $\ZZ_2$-valued index of an odd symmetric differential operator on a non-compact involutive manifold  \cite{BrSaeedi24index}.

\subsection{An anti-linear involution}\label{SS:involtiontau}
Let $H$ be a complex Hilbert space.  We denote the inner product on $H$ by $\<\cdot,\cdot\>_H$. An \textbf{anti-linear}  map  $\tau:H\to H$  is called an anti-involution if  $\tau^*=-\tau= \tau^{-1}$.  Here  we denote by $\tau^*$ the unique anti–linear operator satisfying 
\[
	\<\tau x, y\>_H\ = \ \overline{\<x,\tau^*y\>_H} \qquad\text{for all}\quad
	x,y\in H.
\]
We say that $\tau$ is an {\em anti-unitary anti-involution}.

The following version of  \textit{Kramers' degeneracy} \cite{KleinMartin1952}  is proven in \cite[Lemma~2.2]{BrSaeedi24index}

\begin{lemma}\label{L:even} An anti-linear anti-involution $\tau$ does not have non-zero fixed vectors, i.e. $\tau x=x$ iff $x=0$. Further, if $\tau$ acts on a finite-dimensional space $H$, then $\dim H$ is even.  Moreover, there exists a subspace $L\in H$ such that $H=L\oplus \tau L$.
\end{lemma}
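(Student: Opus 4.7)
The plan is to extract everything from the algebraic identity $\tau^{2}=-\ID$ and then build the desired decomposition one orthogonal pair at a time.

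First, I would unpack the hypothesis. From $\tau^{*}=-\tau$ and $\tau^{*}=\tau^{-1}$ one gets $\tau^{-1}=-\tau$, hence $\tau^{2}=-\ID$. If $\tau x=x$, then applying $\tau$ again gives $\tau^{2}x=\tau x=x$, while directly $\tau^{2}x=-x$; so $x=-x$, i.e.\ $x=0$. This disposes of the first assertion. For the remaining two assertions I would next check that $\tau$ is an isometry: using anti-unitarity,
\[
  \|\tau x\|^{2}\ =\ \<\tau x,\tau x\>_{H}\ =\ \overline{\<x,\tau^{*}\tau x\>_{H}}\ =\ \overline{\<x,x\>_{H}}\ =\ \|x\|^{2}.
\]
Moreover, $x$ and $\tau x$ are automatically orthogonal: setting $\alpha:=\<x,\tau x\>_{H}$, Hermitian symmetry gives $\<\tau x,x\>_{H}=\bar\alpha$, while anti-unitarity applied to the pair $(x,\tau x)$ yields $\<\tau x,\tau^{2}x\>_{H}=\bar\alpha$, i.e.\ $-\<\tau x,x\>_{H}=\bar\alpha$. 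Combining these two computations forces $\alpha=0$.

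For the decomposition, I would proceed by induction on $\dim H$ (the base $\dim H=0$ is trivial). Pick a unit vector $x_{1}\in H$; by the preceding paragraph $\{x_{1},\tau x_{1}\}$ is orthonormal, and in particular linearly independent over $\CC$. Let $W:=\span_{\CC}\{x_{1},\tau x_{1}\}$, which is $\tau$-invariant by construction. The key point is that $W^{\perp}$ is also $\tau$-invariant: for $y\in W^{\perp}$ and any $w\in W$, using $\tau^{*}=-\tau$ and the fact that $\tau w\in W$,
\[
  \<\tau y,w\>_{H}\ =\ \overline{\<y,\tau^{*}w\>_{H}}\ =\ -\overline{\<y,\tau w\>_{H}}\ =\ 0.
\]
Hence $\tau$ restricts to an anti-linear anti-involution on $W^{\perp}$, whose dimension is strictly smaller. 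By the inductive hypothesis $\dim W^{\perp}$ is even and there exists $L'\subset W^{\perp}$ with $W^{\perp}=L'\oplus\tau L'$. Setting $L:=\CC x_{1}\oplus L'$ then yields $H=L\oplus\tau L$ and $\dim H=\dim W+\dim W^{\perp}$ is even.

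I do not expect any real obstacle: the only subtle step is the computation showing $x_{1}\perp\tau x_{1}$, where one has to use anti-unitarity twice (once in each argument) rather than mistakenly treating $\tau$ as linear; once this orthogonality is in hand, the induction is routine. In the Hilbert-space setting one would simply replace the induction by a Zorn's lemma argument applied to collections of orthonormal families $\{x_{i},\tau x_{i}\}_{i\in I}$, but the algebraic core is identical.
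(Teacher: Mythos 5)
Your proof is correct, and it follows the standard Kramers-degeneracy argument (the paper itself defers to \cite{BrSaeedi24index}*{Lemma 2.2}, where essentially this argument appears): derive $\tau^2=-\ID$ to rule out fixed vectors, show $\<x,\tau x\>_H=0$ for all $x$, and then build $L$ by induction on orthogonal $\tau$-invariant planes. All the individual computations (isometry, orthogonality of $x$ and $\tau x$, $\tau$-invariance of $W^\perp$) check out, and your remark that a Zorn's lemma argument replaces the induction in infinite dimensions is the right way to handle the general Hilbert-space case.
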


\subsection{Odd symmetric operators}\label{SS:oddsymmetric}
Let $W\subset H$ be a dense subspace and let $D: H\to H$ be a closed unbounded linear operator on $H$ whose domain is $W$.  We endow  $W$ with the inner product
\[
	\<x,y\>_W\ = \ \<x,y\>_H\ + \ \<Dx,Dy\>_W.
\]
Then $W$ is a Hilbert space and the operator $D:W\to H$ is a bounded. 

\begin{definition}\label{D:odd symmetric operator}
A closed operator $D:H\to H$ with a dense domain $W$ is called {\em odd symmetric} (or $\tau$-symmetric) if 
\begin{equation}\label{E:tauDtau}
    \tau\, D\, \tau^{-1}\ = \ D^*.
\end{equation}
We denote by $\calB_\tau(W,H)$ the space of $\tau$-symmetric operators on $H$ whose domain is $W$. This is a Banach space with the norm defined as the operator norm of $D:W\to H$. 
\end{definition}

\subsection{Graded odd symmetric operators}\label{SS:graded oddsymmetric}
If $H=H^+\oplus H^-$ is a graded Hilbert space, we say that $D$ is \textit{odd with respect to the grading} if $D:H^\pm\to H^\mp$. We denote $D^\pm$ the restriction of $D$ to $H^\pm$ and write $D=\left(\begin{smallmatrix}
0&D^-\\D^+&0\end{smallmatrix}\right)$.

Let $W$ be the domain of $D$. Then $W^\pm:= W\cap H^\pm$ is the domain of $D^\pm$ and $W= W^+\oplus W^-$.  If, in addition,  $D$ is self-adjoint, then $(D^\pm)^*= D^\mp$.

Assume that the anti-unitary anti-involution $\tau$ is odd with respect to the grading,  $\tau:H^\pm\to H^\mp$. If $D$ is odd symmetric, then \eqref{E:tauDtau} implies that $\tau D^\pm\tau^{-1}= (D^\mp)^*$. In this situation, we say that $D$ is a \textit{graded odd symmetric operator}.

We denote the space of graded odd symmetric self-adjoint operators by $\widehat{\calB}_\tau(W,H)$.
If $D\in \widehat{\calB}_\tau(W,H)$, then $\tau D^\pm \tau^{-1}= D^\mp= (D^\pm)^*$. Then \textit{the operators $D^\pm$ are odd symmetric}. Notice, that in this case $\tau:W^\pm\to W^\mp$.

\subsection{The $\ZZ_2$-valued index}\label{SS:Z2index}
If $D$ is an odd symmetric operator then $\ind D=0$ by \eqref{E:tauDtau}. Following \cite{Schulz-Baldes15}, we define the {\em secondary} $\ZZ_2$-valued invariant – the \textit{$\tau$-index of $D$}:

\begin{definition}\label{D:tau index}
If $D$ is an odd symmetric operator on $H$ its {\em $\tau$-index} is
\begin{equation}\label{E:tauindex}
	\ind_\tau D\ := \ \dim \ker D \qquad \mod 2.
\end{equation}
If $D\in \widehat{\calB}_\tau(W,H)$ is a graded odd symmetric self-adjoint operator on a graded Hilbert space $H$, we define 
\begin{equation}\label{E:tauindex graded}
	\ind_\tau D^+\ := \ \dim \ker D^+ \qquad \mod 2.
\end{equation}
\end{definition}

The following version of the homotopy invariance of the index is proven in \cite[Theorem~2.5]{BrSaeedi24index}:
\begin{theorem}\label{T:homotopyindex}
The $\tau$-index $\ind_\tau D$ is constant on the connected components of $\calB_\tau(W,H)$. The $\tau$-index  $\ind_\tau D^+$ is constant on the connected components of $\widehat{\calB}_\tau(W,H)$.
\end{theorem}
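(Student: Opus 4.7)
The plan is to prove local constancy of $\ind_\tau$ on $\calB_\tau(W,H)$ and conclude by connectedness. Fixing $D_0 \in \calB_\tau(W,H)$ (which is Fredholm), I would set $V_1 := \ker D_0$ and $V_2 := \ker D_0^*$, both finite-dimensional, and observe that $\tau D_0 \tau^{-1} = D_0^*$ restricts to an anti-linear isomorphism $\tau|_{V_1} : V_1 \to V_2$. I would then decompose $W = V_1 \oplus W'$ and $H = V_2 \oplus H'$ orthogonally, with $W' := V_1^\perp \cap W$ and $H' := \IM D_0$; the compressed operator $D_0 : W' \to H'$ is a bounded isomorphism, and the same holds for $D$ in a small neighborhood $\calU$ of $D_0$. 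A standard Lyapunov--Schmidt bordering then yields a continuous map $F : \calU \to \mathrm{Hom}(V_1, V_2)$ with $\dim \ker D = \dim \ker F(D)$ for every $D \in \calU$. Because $\tau$ is anti-unitary with $\tau V_1 = V_2$, one has $\tau V_1^\perp = V_2^\perp$, so the decomposition is $\tau$-equivariant and $F(D)$ inherits the odd-symmetry $\tau F(D) \tau^{-1} = F(D)^*$.

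The core of the argument would be the following parity lemma: every linear $G : V_1 \to V_2$ satisfying $\tau G \tau^{-1} = G^*$ obeys $\dim \ker G \equiv \dim V_1 \pmod{2}$. To prove it, I would define
\[
\omega(v, w) \ := \ \<\tau v, G w\>_H, \qquad v, w \in V_1,
\]
and check that $\omega$ is $\CC$-bilinear (since the anti-linearity of $\tau$ cancels the conjugate-linearity of the inner product in its first slot). A short computation using $\tau^* = -\tau$ and $\tau G \tau^{-1} = G^*$ gives $\omega(w, v) = -\omega(v, w)$, so $\omega$ is skew-symmetric. The bijectivity of $\tau|_{V_1}$ identifies the radical of $\omega$ with $\ker G$, and since any skew-symmetric bilinear form on a finite-dimensional complex vector space has even rank, $\dim V_1 - \dim \ker G$ is even. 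Applying this to $G = F(D)$ gives $\dim \ker D \equiv \dim V_1 \pmod{2}$ for all $D \in \calU$, so $\ind_\tau$ is locally constant; being $\ZZ_2$-valued, it is then constant on each connected component of $\calB_\tau(W,H)$.

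The graded case is entirely parallel: take $V^+ := \ker D_0^+$ and $V^- := \tau V^+ = \ker D_0^-$ (using graded self-adjointness), reduce via Lyapunov--Schmidt to $F^+(D) : V^+ \to V^-$, and apply the same skew-form lemma on $V^+$. The main technical obstacle will be verifying that the Lyapunov--Schmidt reduction is genuinely $\tau$-equivariant; this requires compatibility between the domain $W$ (graph norm) and $\tau W = \mathrm{dom}(D_0^*)$, which matches $W$ automatically in the graded self-adjoint setting but must be controlled more carefully in the ungraded case. Once this compatibility is in place, the skew-symmetric form $\omega$ is the conceptual heart of the proof and explains why $\ind_\tau$ is naturally $\ZZ_2$-valued rather than $\ZZ$-valued.
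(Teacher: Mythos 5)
Your argument is correct, and it is the standard mechanism behind the $\ZZ_2$-valued index (note that this paper does not actually prove Theorem~\ref{T:homotopyindex} --- it cites the proof from \cite{BrSaeedi24index}, which in turn follows Schulz-Baldes \cite{Schulz-Baldes15}; your finite-dimensional reduction plus skew-form parity lemma is essentially that argument). The parity lemma is stated and proved correctly: $\omega(v,w)=\<\tau v,Gw\>_H$ is (conjugate-)bilinear and skew-symmetric by the identity $\<\tau x,y\>_H=-\<\tau y,x\>_H$ (which follows from $\tau^*=-\tau$), its radical is $\ker G$ because $\tau V_1=V_2$ and $\IM G\subset V_2$, and even rank of a skew form gives $\dim\ker G\equiv\dim V_1\pmod 2$. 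The one point you flag as a possible obstacle --- $\tau$-equivariance of the Lyapunov--Schmidt reduction in the ungraded case --- does go through, and the reason is exactly the domain bookkeeping you identify: since $\tau D_0\tau^{-1}=D_0^*$ as unbounded operators, $\mathrm{dom}(D_0^*)=\tau(W)$, and because $\tau$ is anti-unitary with $\tau V_1=V_2$ one gets $\tau(V_1^\perp\cap W)=V_2^\perp\cap\tau(W)=V_2^\perp\cap\mathrm{dom}(D_0^*)$, while on the target side $\tau(\IM D_0)=\tau(V_2^\perp)=V_1^\perp=\IM D_0^*$. Hence $\tau$ carries the block decomposition used for $D$ to the one used for $D^*$, the four blocks satisfy $\tau a\tau^{-1}=a^*$, $\tau b\tau^{-1}=c^*$, $\tau c\tau^{-1}=b^*$, $\tau d\tau^{-1}=d^*$, and the Schur complements obey $\tau F(D)\tau^{-1}=a^*-c^*(d^*)^{-1}b^*=F(D)^*$. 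With that verified, your proof is complete; the graded case is, as you say, the same computation with $\tau:H^\pm\to H^\mp$ and $W^-=\tau W^+$, where no domain issue arises.
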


We now discuss some geometric examples of odd symmetric operators, \cite{BrSaeedi24index}.

\subsection{Quaternionic vector bundles}\label{SS:quaternionic}
An\textit{ involutive manifold}  $(M,\theta^M)$ is  a Riemannian manifold $M$ together with a metric preserving involution  $\theta^M:M\to M$.  A {\em quaternionic vector bundle} over an involutive manifold $M$ is hermitian vector bundle $E$ endowed with an anti-linear unitary bundle map $\theta^E:E\to \theta^*E$, such that $(\theta^E)^2=-1$, cf. \cites{Dupont69,DeNittisGomi15,Hayashi17}. This means that for every $x\in M$ there exists an anti-linear map $\theta^E_x:E_x\to E_{\theta(x)}$, which depends smoothly on $x$ and satisfies $\theta^E_{\theta^M(x)}\, \theta^E_x=-1$,
$(\theta^E_x)^*\theta^E_x=1$.

If $E=E^+\oplus E^-$ is a graded vector bundle and  $\theta^E$ is {\em odd with respect to the grading}  (i.e. $\theta^E(E^\pm)=E^\mp$), we say that $(E,\theta^E)$ is a {\em graded quaternionic bundle}.

Given a quaternionic vector bundle $(E,\theta^E)$ over an involutive manifold  $(M,\theta^M)$ we define an anti-unitary anti-involution $\tau$ on the space $\Gamma(M,E)$ of sections of $E$ by 
\begin{equation}\label{E:tau=}
    \tau:\, f(x)\ \mapsto \ \theta^E f(\theta^M x), \qquad f\in\Gamma(M,E).  
\end{equation}
\subsection{Odd symmetric elliptic operators}\label{SS:oddelliptic}
Let $(E=E^+\oplus E^-,\theta^E)$ be a graded quaternionic vector bundle over an involutive manifold $(M,\theta^M)$. Let $D:\Gamma(M,E)\to \Gamma(M,E)$ be a self-adjoint odd symmetric elliptic differential operator on $M$, which is odd with respect to the grading, i.e. $D:\Gamma(M,E^\pm)\to \Gamma(M,E^\mp)$. We set $D^\pm:= D|_{\Gamma(M,E^\pm)}$. Then $\tau\, D^\pm\, \tau^{-1}= (D^\pm)^*= D^\mp$ and
\[
    D\ = \ \begin{pmatrix}
        0&D^-\\D^+&0
    \end{pmatrix}
\]
with respect to the decomposition $E=E^+\oplus E^-$.

Suppose now that $D$ is Fredholm. In particular, this is always the case when $M$ is compact.  Then the $\tau$-index $\ind_\tau D^+\in \ZZ_2$ is defined by \eqref{E:tauindex}. In \cite{BrSaeedi24index} we computed this index in several examples and proved the $\ZZ_2$-valued analogs of the Relative Index Theorem, the Callias index theorem, and the Boutet de Monvel's index theorem for Toeplitz operators.

\section{The spectral flow and the Robbin-Salamon theorem}\label{S:spaces}

This section recalls some basic definitions and results from \cite{RobbinSalamon95}. We also fix the notation which will be used in the next section.

\subsection{Some spaces of operators}\label{SS:nonequiv}
Let $A:H\to H$ be a closed operator  with dense domain $W$. As in Section~\ref{SS:oddsymmetric},  we define the scalar product on $W$ by 
\[
	\<x,y\>_W\ = \ \<x,y\>_H\ + \ \<Ax,Ay\>_H
\]
and view $A$ as a bounded operator  $A:W\to H$. We denote by $\|A\|$ its norm.  

We now make a basic 

\begin{assumption}\label{A:compact embedding}
The embedding $W\hookrightarrow H$ is compact.
\end{assumption}

Let $\calS(W,H)$ denote the space of self-adjoint (possibly unbounded) operators on $H$ whose domain is $W$ and \textit{which have a non-zero resolvent set}.  If the Assumption~\ref{A:compact embedding} is satisfied then every  $A\in \calS$ has a compact resolvent and, hence, $A$ has a discrete spectrum. In particular, it is Fredholm.

From this point on we always assume Assumption~\ref{A:compact embedding}.

Let $\calA(\RR,W,H)$ denote the space of norm continuous maps $A: \RR\to \calS(W,H)$ which have limits 
\begin{equation}\label{E:limA(t)}
    A^\pm\ = \ \lim_{t\to \pm\infty}\, A(t)
\end{equation}
and the operators $A^\pm:W\to H$ are bijective. This is an open subspace in the Banach space of all maps satisfying \eqref{E:limA(t)} with the norm 
\begin{equation}\label{E:normA}
	\|A\|_\calA \ := \  \sup_{t\in \RR}\, \|A(t)\|.
\end{equation}

Denote by $\calA^1(\RR,W,H)$ the set of those $A\in \calA$ which are continuously differentiable in the norm topology and satisfy 
\[
	\|A\|_{\calA^1}\ := \ \sup_{t\in \RR}\, \big(\|A(t)\|+\|\dot{A}(t)\|\big)\ <\ \infty. 
\]
The space $\calA^1$ is dense in $\calA$.

The following theorem is proven in \cite[Theorem~4.23]{RobbinSalamon95}:

\begin{theorem}\label{T:RobbinSalamon}
There is a unique map $\mu:\calA(\RR,W,H)\to \ZZ$ satisfying the following axioms:

\begin{enumerate}
\item{\em (Homotopy)} \ \  $\mu$ is constant on the connected components (in norm topology) of $\calA(\RR,W,H)$.
\item{\em (Constant)} \ \  If $A(t)$ is  independent of $t$, then $\mu(A)=0$.
\item{\em (Direct sum)}\ \ $\mu(A_1\oplus A_2)= \mu(A_1)+\mu(A_2)$. 
\item{\em (Normalization)}\ \  For $W=H=\RR$ and $A(t)=\arctan(t)$, we have $\mu(A)=1$. 
\end{enumerate}
\end{theorem}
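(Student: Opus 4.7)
The plan is to prove existence and uniqueness separately. For existence, I would construct a candidate map $\spf:\calA(\RR,W,H)\to\ZZ$ by counting signed eigenvalue crossings at $0$: Assumption~\ref{A:compact embedding} ensures each $A(t)$ has compact resolvent and hence discrete spectrum, while invertibility of $A^\pm$ gives a uniform spectral gap at $0$ for $|t|$ large, so after a generic perturbation within the path-component of $A$ only finitely many eigenvalue curves cross $0$ and all crossings are transverse. Let $\spf(A)$ be their algebraic count. The four axioms are verified directly from this definition. The heart of the theorem is uniqueness: for any other candidate $\mu$, show that $\nu:=\mu-\spf$ vanishes identically. This $\nu$ satisfies Homotopy, Constant, Direct Sum, and $\nu(\arctan)=0$.

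Uniqueness is carried out in three reduction steps. First is \emph{spectral reduction}: pick $\epsilon>0$ smaller than $\mathrm{dist}(0,\mathrm{spec}(A^\pm))$ and let $P(t)$ be the spectral projection of $A(t)$ onto $(-\epsilon,\epsilon)$; compactness of the resolvent and continuity of $A(t)$ give a continuous $P(t)$ of uniformly bounded finite rank that vanishes for $|t|$ large. Decomposing along $\IM P(t)\oplus\ker P(t)$ and using a homotopy within $\calA$ to decouple the two summands, I write $A$ as a direct sum $A_0\oplus A_1$ where $A_0$ acts on a finite-dimensional Hilbert space and $A_1$ is invertible uniformly in $t$. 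The spectrum of $A_1$ is bounded away from $0$, so its positive and negative spectral parts can be straight-line contracted to $\pm I$, showing $A_1$ is homotopic to a constant path; thus $\nu(A_1)=0$ by Constant and $\nu(A)=\nu(A_0)$ by Direct Sum. Second is \emph{generic transversality}: $A_0$ is homotoped with endpoints fixed to a path whose eigenvalue curves meet $0$ only transversally at distinct times $t_1,\dots,t_N$. Third is \emph{localization}: a further homotopy and Direct Sum argument decompose $A_0$ as $\bigoplus_{i=1}^N a_i$, each $a_i$ a one-dimensional path on $\RR$ with a single transverse crossing at $t_i$.

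Every such $a_i$ is homotopic in $\calA(\RR,\RR,\RR)$ to a translate of $\pm\arctan$, and translates are homotopic to $\pm\arctan$ itself by a time shift, so it suffices to check $\nu(\pm\arctan)=0$. The $+$ case is Normalization. For the $-$ case, consider the homotopy $M_s(t):=\left(\begin{smallmatrix} \arctan(t) & -s\\ -s & -\arctan(t)\end{smallmatrix}\right)$ for $s\in[0,1]$, whose spectrum $\pm\sqrt{\arctan^2(t)+s^2}$ is uniformly bounded away from $0$ for $s=1$; hence $M_1$ is homotopic within $\calA$ to a constant path and has $\nu=0$, forcing $\nu(\arctan)+\nu(-\arctan)=0$ by Direct Sum. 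Summing contributions over the $N$ crossings yields $\nu(A)=0$ for all $A\in\calA(\RR,W,H)$. The main obstacle I expect is making the spectral reduction fully rigorous: the projection $P(t)$ is only continuous where the spectrum does not cross the thresholds $\pm\epsilon$, and to obtain a continuous finite-dimensional splitting defined on all of $\RR$ one must locally adapt $\epsilon$ or invoke a graph/partition-of-unity construction as in Robbin-Salamon.
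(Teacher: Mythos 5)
The paper does not prove this theorem at all: it is imported verbatim from Robbin--Salamon (Theorem~4.23 of \cite{RobbinSalamon95}), with the only original content being the observation (Remark~\ref{R:catenation}) that the catenation axiom may be dropped since Robbin--Salamon derive it from the remaining four. Your proposal is essentially a reconstruction of the Robbin--Salamon argument itself: existence via signed transverse crossings, uniqueness via spectral reduction to a finite-dimensional summand plus an invertible summand, genericity, localization to one-dimensional crossings, and the $2\times 2$ anti-diagonal perturbation to handle $-\arctan$. That trick and the overall three-step reduction are exactly the standard route, so the approach matches the source the paper relies on.

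Two technical points deserve flagging. First, the step ``its positive and negative spectral parts can be straight-line contracted to $\pm I$'' cannot be taken literally: the endpoint $\pm I$ is a bounded operator, hence does not lie in $\calS(W,H)$ when $\dim H=\infty$ (elements of $\calS(W,H)$ have domain $W$ and, under Assumption~\ref{A:compact embedding}, compact resolvent), so the straight-line homotopy exits $\calA(\RR,W,H)$ at its endpoint. One must instead connect the invertible summand $A_1$ to a \emph{constant invertible element of $\calS(W,H)$} (e.g. $A_1(-\infty)$), which requires a connectivity argument for paths of invertible operators within $\calS(W,H)$ rather than a naive spectral deformation; this is precisely what Robbin--Salamon's finite-rank reduction lemma is engineered to avoid. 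Second, as you yourself note, the spectral projection $P(t)$ onto $(-\epsilon,\epsilon)$ is continuous only where $\pm\epsilon$ avoid the spectrum, so the global finite-dimensional splitting needs the locally adapted thresholds / partition-of-unity construction of \cite{RobbinSalamon95}. Neither issue changes the architecture of the proof, but both must be repaired for the uniqueness half to close.
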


\begin{remark}\label{R:catenation}
Robbin and Salamon include an extra {\em catenation} axiom in their theorem. But then, \cite[Prop.~4.26]{RobbinSalamon95} they show, that this axiom follows from the others 4 axioms and, hence, can be omitted. Since for our main object of study – the half-spectral flow –  the catenation axiom does not make sense, we prefer to formulate Theorem~\ref{T:RobbinSalamon} without it.  
\end{remark}

\subsection{The spectral flow}\label{SS:spflow}
For $A(t)\in \calA$ let $\spf A$ denote the spectral flow of $A(t)$. There are many equivalent ways to define $\spf A$. Let us recall the definition given in \cite{RobbinSalamon95}. 

 We say that $t\in \RR$ is a {\em crossing} for $A$ if $\ker A(t)\not= \{0\}$.  
First, consider the case when $A\in \calA^1$. If $t$ is a crossing for $A$, we define the {\em crossing operator}
\[
	\Gamma(A,t)\ := \ P(A,t)\dot{A}(t)P(A,t){\big|_{\ker A(t)}},
\]
where $P(A,t):H\to H$ denotes the orthogonal projection onto the kernel of $A(t)$. We say that the crossing $t$ is {\em regular}  if the crossing operator $\Gamma(A,t)$ is non-degenerate. A regular crossing is  {\em simple} if $\dim\ker A(t)= 1$. 

If $t$ is a regular crossing for $A$  we denote by $\sign \Gamma(A,t)$ the signature (number of positive minus the number of negative eigenvalues) of $\Gamma(A,t)$. 

If $t$ is a simple crossing there is a unique real-valued function $\lambda(s)$, defined near $t$, such that $\lambda(s)$ is an eigenvalue of $A(s)$ for all $s$ and $\lambda(t)=0$. We call $\lambda$ the {\em crossing eigenvalue}. Then 
\[
	\sign \Gamma(A,t) \  = \ \sign \dot{\lambda}(t).
\]

If $A\in \calA^1$ has only regular crossings then we define 
\begin{equation}\label{E:sfreguar}
	\spf A\ = \ \sum_t \sign \Gamma(A,t),
\end{equation}
where the sum is taking over all crossings of $A$. 
Thus, if $A(t)$ has only simple crossings 
\begin{equation}\label{E:sfsimple}
	\spf A\ = \ \sum_t \sign \dot{\lambda}(t).
\end{equation}

The following result is a combination of several theorems from \cite{RobbinSalamon95}:

\begin{proposition}\label{P:sfcrossings}
\begin{enumerate}
\item In every connected component  of $\calA(\RR,W,H)$ there  is an $A\in \calA^1$ which has only simple crossings. 

\item   If $A_0,A_1\in \calA^1$ belong to the same connected component of $\calA(\RR,W,H)$ and have only regular crossings then  $\spf A_0= \spf A_1$. 
\end{enumerate}
\end{proposition}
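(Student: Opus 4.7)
The plan is to follow the transversality strategy underlying \cite{RobbinSalamon95}, proving the two parts by parallel genericity arguments.

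For part (i), I would first exploit the density of $\calA^1$ in $\calA$ (with respect to the norm $\|\cdot\|_\calA$) to replace the given $A \in \calA(\RR,W,H)$ by a nearby $C^1$ path in the same connected component. Since $A(t) \to A^\pm$ in norm as $t \to \pm\infty$ and the limits $A^\pm$ are invertible, the crossing set of this $C^1$ path is contained in some compact interval $[-T,T]$. Next I would perturb by a compactly supported self-adjoint perturbation $K(t) = \chi(t) K_0$, where $\chi$ is a smooth cutoff equal to one on a neighbourhood of the crossings and vanishing outside $[-T,T]$, and $K_0$ is a small self-adjoint compact operator. Because Assumption~\ref{A:compact embedding} ensures that each $A(t)$ has compact resolvent, a uniform-in-$t$ spectral projection onto a finite-dimensional space $V \subset H$ captures the low-lying eigenvalues on $[-T,T]$. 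Restricting $K_0$ to the finite-dimensional space of self-adjoint endomorphisms of $V$ and applying Sard's theorem, I would find an arbitrarily small $K_0$ for which the perturbed path has only simple crossings; because the perturbation is small and compact, the perturbed path stays in $\calA^1$ and in the original connected component.

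For part (ii), I would connect $A_0$ and $A_1$ by a continuous path $\{A_s\}_{s \in [0,1]}$ in their common connected component, approximate it by a family $A(s,t)$ depending $C^1$ on both variables (keeping $A_0,A_1$ at the endpoints), and perturb once more so that the two-parameter crossing set
\[
    \Sigma \ := \ \{(s,t) \in [0,1] \times \RR : \ker A(s,t) \neq 0\}
\]
becomes a smooth $1$-dimensional submanifold with $1$-dimensional kernels at its interior points, whose projection $\Sigma \to [0,1]$ has only fold-type critical values. At non-critical $s$, the slice $A(s,\cdot)$ has only simple crossings, and the crossing eigenvalues $\lambda(s,t)$ together with their derivatives $\dot\lambda(s,t)$ depend continuously on $s$, so the quantity \eqref{E:sfsimple} is locally constant in $s$. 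At a fold value of $s$, a local normal-form analysis shows that two crossings are born (or annihilated) with opposite signs of $\dot\lambda$, so their combined contribution to \eqref{E:sfsimple} vanishes. Hence $\spf A(s,\cdot)$ is constant throughout $[0,1]$, yielding $\spf A_0 = \spf A_1$.

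The main obstacle will be carrying out the transversality arguments rigorously in the unbounded-operator setting rather than in the standard finite-dimensional differential-topology context. The key technical input is that Assumption~\ref{A:compact embedding} yields a uniform finite rank for the spectral projection onto the small eigenvalues of $A(s,t)$ over compact subsets of parameter space, reducing the required transversality to Sard's theorem in finite dimensions. A secondary concern is that all perturbations must preserve the asymptotic invertibility at $t\to\pm\infty$ and the domain $W$; localizing perturbations in $t$ by a cutoff and restricting to compact (hence $W\to H$ bounded) operators addresses both requirements and guarantees that the perturbed families remain in $\calA^1(\RR,W,H)$.
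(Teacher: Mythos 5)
The paper does not actually prove this proposition: it is imported wholesale from Robbin--Salamon, so the relevant comparison is with the arguments of \cite{RobbinSalamon95} themselves. Your plan --- genericity of simple crossings via a finite-dimensional reduction plus Sard, and homotopy invariance via a two-parameter crossing set with fold singularities --- is the standard route and is essentially a reconstruction of their \S4; the skeleton is sound, including the observation that invertibility of $A^\pm$ confines all crossings to a compact $t$-interval and that a fold creates or annihilates a pair of simple crossings with opposite signs of $\dot\lambda$, so the sum \eqref{E:sfsimple} is unaffected.

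Two steps need repair. First, there is in general \emph{no} single finite-dimensional $V\subset H$ whose spectral projection ``captures the low-lying eigenvalues'' uniformly for $t\in[-T,T]$: the ranges of the projections $P_{[-\epsilon,\epsilon]}(A(t))$ rotate with $t$ and their union need not lie in any finite-dimensional subspace. What is true is that on a small $t$-interval one can choose $\epsilon$ with $\pm\epsilon$ never in the spectrum, so the projection has constant finite rank and can be trivialized by a norm-continuous family of unitaries (Kato); the Sard argument must then be run locally near each crossing and patched, and the reduction of ``$0\in\mathrm{spec}(A(t)+K)$'' to a finite-dimensional condition should go through an effective (Schur-complement type) operator rather than a naive restriction to $V$, since $V$ is not $A(t)$-invariant. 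Second, part (ii) as stated allows $A_0,A_1$ to have \emph{regular} but non-simple crossings, whereas your bordism argument compares paths with simple crossings. You need the additional (standard, but not free) lemma that a regular crossing at $t_0$ splits under a small perturbation into $\dim\ker A(t_0)$ simple crossings whose signs $\sign\dot\lambda_i$ sum to $\sign\Gamma(A,t_0)$, so that \eqref{E:sfreguar} for the endpoints agrees with \eqref{E:sfsimple} for their generic perturbations. With those two repairs the argument closes.
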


We now define the spectral flow of $A\in \calA(\RR,W,H)$ as the spectral flow of any of the operators $A_1\in \calA^1(\RR,W,H)$ which lies in the same connected component of $\calA(\RR,W,H)$ as $A$ and has only regular crossings.  It follows from Proposition~\ref{P:sfcrossings}(ii) that $\spf A$ is well defined and is constant on connected components of $\calA(\RR,W,H)$. Moreover, it is easy to see that it satisfies the other hypothesis of Theorem~\ref{T:RobbinSalamon}. Thus we obtain the following theorem (cf. \cite[Lemma~4.27]{RobbinSalamon95})

\begin{theorem}\label{T:spflow=mu}
The unique function $\mu(A)$ defined in Theorem~\ref{T:RobbinSalamon} is equal to the spectral flow of $A$
\end{theorem}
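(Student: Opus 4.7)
The plan is to apply the uniqueness half of Theorem~\ref{T:RobbinSalamon}: since $\mu$ is characterized by the four axioms (Homotopy, Constant, Direct sum, Normalization), it suffices to verify that the function $A\mapsto \spf A$, defined via regular crossings and extended to all of $\calA(\RR,W,H)$ through Proposition~\ref{P:sfcrossings}, satisfies each of these axioms. The theorem then follows immediately from uniqueness.

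First, I would address well-definedness: given $A\in \calA(\RR,W,H)$, Proposition~\ref{P:sfcrossings}(i) produces some $A_1\in \calA^1(\RR,W,H)$ in the same connected component with only simple (hence regular) crossings, and Proposition~\ref{P:sfcrossings}(ii) guarantees that the integer $\sum_t \sign\Gamma(A_1,t)$ is independent of the particular representative $A_1$. This simultaneously verifies the Homotopy axiom: any two paths $A, A'$ in the same connected component admit such representatives $A_1, A_1'$ which themselves lie in the common component, so $\spf A = \spf A_1 = \spf A_1' = \spf A'$.

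Next I would check the remaining three axioms. For Constant, note that if $A(t)\equiv A_0$ is independent of $t$, then the asymptotic limits force $A_0:W\to H$ to be bijective, so $\ker A(t)=\{0\}$ for every $t$ and there are no crossings whatsoever, giving $\spf A=0$ by the empty sum in \eqref{E:sfreguar}. For Direct sum, if $A_1\in\calA^1(\RR,W_1,H_1)$ and $A_2\in\calA^1(\RR,W_2,H_2)$ have only regular crossings, then the crossings of $A_1\oplus A_2$ are the disjoint union of those of the factors, $\ker(A_1\oplus A_2)(t)=\ker A_1(t)\oplus \ker A_2(t)$, and the crossing operator splits as $\Gamma(A_1\oplus A_2,t) = \Gamma(A_1,t)\oplus \Gamma(A_2,t)$, whose signature is additive; passing to representatives with only regular crossings (using Proposition~\ref{P:sfcrossings}(i)) extends this to arbitrary $A_i\in\calA$. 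For Normalization, on $W=H=\RR$ with $A(t)=\arctan(t)$ the limits $A^{\pm}=\pm\pi/2$ are bijective, the unique crossing is at $t=0$, and $\Gamma(A,0)=\dot A(0)=1$, so $\sign\Gamma(A,0)=+1$ and $\spf A = 1$.

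The main obstacle has already been isolated in Proposition~\ref{P:sfcrossings}: the perturbation argument producing a $\calA^1$-representative with only simple crossings, and the homotopy invariance of $\sum_t \sign\Gamma(A,t)$ under deformations through paths with regular crossings. Granted those two facts (which are taken from \cite{RobbinSalamon95}), the rest of the verification reduces to the bookkeeping sketched above, and uniqueness in Theorem~\ref{T:RobbinSalamon} delivers $\mu(A)=\spf A$.
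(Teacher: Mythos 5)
Your proposal is correct and follows the same route as the paper: verify that $\spf$ satisfies the four axioms of Theorem~\ref{T:RobbinSalamon} (well-definedness and the Homotopy axiom coming from Proposition~\ref{P:sfcrossings}, the remaining three by direct inspection) and invoke uniqueness. The paper dismisses the last three axioms as "easy to see," so your explicit checks of the Constant, Direct sum, and Normalization axioms simply fill in details the paper leaves implicit.
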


 \subsection{The index and the spectral flow}\label{SS:index}
Consider the spaces 
\[
	\mathscr{H}\ :=\  L^2(\RR,H), \qquad \mathscr{W}\ := \ L^2(\RR,W)\cap W^{1,2}(\RR,H)
\]
equipped with the norms 
\[\begin{aligned}
	\|\xi\|_{\mathscr{H}}\ &:= \  \int_{-\infty}^{\infty}\,\|\xi(t)\|_H\,dt,\\
	\|\xi\|_{\mathscr{W}}\ &:= \  \int_{-\infty}^{\infty}\,\big(\,\|\xi(t)\|_W+\|\dot\xi(t)\|_H\,\big)\,dt,
\end{aligned}\]

 Given $A\in \calA^1(\RR,W,H)$ consider the differential operator
 \begin{equation}\label{E:DA}
  	D_A:\xi(t)\ \mapsto \ \frac{d}{dt}\xi(t)\ + \ A(t)\xi(t).
 \end{equation} 
 This is a bounded operator $\mathscr{W}\to \mathscr{H}$. Robbin and Salamon show that it is Fredholm (\cite[Theorem~3.12]{RobbinSalamon95}) and  $\ind D_A$ satisfies the conditions of Theorem~\ref{T:RobbinSalamon} (\cite[page~20]{RobbinSalamon95}). Thus it must be equal to the unique function $\mu$ of Theorem~\ref{T:RobbinSalamon}. Hence, by Theorem~\ref{T:spflow=mu}, we have

 \begin{theorem}\label{T:RobbinSalamon2}
 For $A\in \calA(\RR,W,H)$ we have
 \begin{equation}\label{E:RobbinSalamon2}
 	\ind \Big(\,\frac{d}{dt}+ A(t)\,\Big) \ = \  \spf A.
 \end{equation}
 \end{theorem}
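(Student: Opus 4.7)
The plan is to invoke the uniqueness assertion of Theorem~\ref{T:RobbinSalamon}. Combined with Theorem~\ref{T:spflow=mu}, this reduces the problem to verifying that $A\mapsto \ind D_A$ satisfies the four axioms (Homotopy), (Constant), (Direct sum), and (Normalization); the identity $\ind D_A=\spf A$ then follows from uniqueness. Since $D_A$ is defined only for $A\in \calA^1$, a preliminary step is to extend $\ind D_A$ to all of $\calA$. I would quote the Fredholmness of $D_A:\calW\to \calH$ from \cite[Theorem~3.12]{RobbinSalamon95}, the essential input being that the limits $A^\pm:W\to H$ are bijective; then the map $A\mapsto D_A$ into bounded Fredholm operators $\calW\to \calH$ is norm continuous, so that $\ind D_A$ is locally constant on $\calA^1$. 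By Proposition~\ref{P:sfcrossings}(i) every connected component of $\calA$ already meets $\calA^1$, which lets us define $\ind D_A$ unambiguously on $\calA$ and makes (Homotopy) automatic.

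For (Constant), if $A(t)\equiv A_0$ with $A_0$ a bijective self-adjoint operator, I would diagonalize $A_0$ spectrally, reducing $D_A\xi=0$ to the scalar ODEs $\dot c_k+\lambda_k c_k=0$ on each eigenline, whose only $L^2(\RR)$ solution is $0$ (since $\lambda_k\ne 0$). The same argument applied to the formal adjoint $-d/dt+A_0$ gives $\dim\ker D_A^*=0$, so $\ind D_A=0$. The (Direct sum) axiom is immediate from the block-diagonal identity $D_{A_1\oplus A_2}=D_{A_1}\oplus D_{A_2}$. For (Normalization), with $H=W=\RR$ and $A(t)=\arctan(t)$, the homogeneous equation $\dot\xi=-\arctan(t)\xi$ is solved by $\xi(t)=c\exp\bigl(-\int_0^t\arctan(s)\,ds\bigr)$; since $\int_0^t\arctan(s)\,ds=t\arctan(t)-\tfrac{1}{2}\log(1+t^2)$ grows like $\tfrac{\pi}{2}|t|$ at infinity, this $\xi$ lies in $L^2(\RR)$, while the corresponding solution of the adjoint equation grows exponentially and does not. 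Hence $\dim\ker D_A=1$, $\dim\mathrm{coker}\,D_A=0$, and $\ind D_A=1$, as required.

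With all four axioms verified, uniqueness in Theorem~\ref{T:RobbinSalamon} gives $\ind D_A=\mu(A)$, and Theorem~\ref{T:spflow=mu} identifies $\mu(A)=\spf A$. The main obstacle is the preliminary extension step: one needs norm continuity of $A\mapsto D_A$ into Fredholm operators together with local constancy of the index to be strong enough to make $\ind D_A$ a well-defined, locally constant function on all of $\calA$, which is exactly what \cite[Theorem~3.12]{RobbinSalamon95} combined with standard stability of the Fredholm index delivers. Once this is in hand, the remaining axioms (Constant), (Direct sum), and (Normalization) reduce to essentially routine scalar ODE computations, so the real content of the theorem is packaged in the Fredholm-stability input and the uniqueness theorem.
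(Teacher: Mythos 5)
Your proposal is correct and follows essentially the same route as the paper: the paper also deduces the theorem by citing the Fredholmness of $D_A$ from \cite[Theorem~3.12]{RobbinSalamon95}, asserting that $\ind D_A$ satisfies the four axioms of Theorem~\ref{T:RobbinSalamon} (citing \cite[page~20]{RobbinSalamon95} for the verification you carry out explicitly), and then concluding via uniqueness and Theorem~\ref{T:spflow=mu}. Your explicit ODE computations for the (Constant) and (Normalization) axioms and your care about extending from $\calA^1$ to $\calA$ simply fill in details the paper delegates to the references.
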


\subsection{A periodic family of operators}\label{SS:periodic}
Consider an element $A\in \calA(\RR,W,H)$, such that there exists a bijective $B\in \calS(W,H)$ such that 
\begin{equation}\label{E:loop}
	A(t)=B \qquad \text{for all} \quad t\not\in[-\pi,\pi].
\end{equation}
Let 
\[
	S^1\ = \ \{ e^{i t}:\,-\pi\le t\le \pi\}
\] 
be the unit circle, and define $\tilde{A}:S^1\to \calS$ by
\[
	\tilde{A}(e^{ i t})\ := \ A(t).	
\]
We denote the space of such maps $\tilde{A}$ by $\Omega\calS(W,H)$. This is a version of a loop space of self-adjoint operators on $H$, namely, we restrict ourselves to loops with certain analytic properties. Then
\[
	\spf(\tilde{A})\ := \ \spf(A)  \ = \ \sum_{-\pi<t<\pi} \sign \Gamma(A,t)
\]
is the spectral flow of $\tilde{A}$ as it is defined in \cite{APS3}. 


It is shown in \cite{APS3} that  the isomorphism $\pi_1(\calS)= \pi_0(\Omega\calS)= \ZZ$ is detected by the spectral flow. In other words, if the spectral flow of a loop $\tilde{A}$ vanishes, then $\tilde{A}$ is homotopic to a constant path in $\Omega\calS$. 

Consider the operator
\[
	D_{\tilde{A}}:  L^2(S^1,W)\cap W^{1,2}(S^1,H) \ \to \ L^2(\RR,H),
\]
defined by 
\[
 	D_{\tilde{A}}:\xi(e^{i t})\ 
 	\mapsto \ \frac{d}{dt}\,\xi(e^{ i t})\ - \ A(e^{i t})\,\xi(e^{it}).
 \]
 This is just the operator \eqref{E:DA} with periodic boundary conditions. Thus, cf. \cite[p.~95]{APS3}
 \[
 	\ind A \ = \ \ind \tilde{A}.
 \]
 Hence, we recover from from \ref{T:RobbinSalamon2} the following result, originally proven in \cite{APS3}:
 \begin{equation}\label{E:APSindex}
 	\ind D_{\tilde{A}} \ = \ -\, \spf \tilde{A}.
 \end{equation}
 Equivalently, 
 \begin{equation}\label{E:APSindex2}
 	\ind \left(\, \frac{d}{dt}+A\,\right) \ = \  \spf \tilde{A}.
 \end{equation}
\section{The half-spectral flow of a family of operators on real line}\label{S:tauabsractflow}

In this section, we consider $\tau$-symmetric paths of self-adjoint operators. Because of the symmetry, the usual spectral flow vanishes. We define a secondary invariant {\em half-spectral flow}, with values in $\ZZ_2$. In   \cite{DollSB21}  this invariant is called the {\em odd half-spectral flow}. This part of the section roughly follows \cite{DeNittisSB15}, but we consider a slightly different setting because of our future applications. 

Theorem~\ref{T:uniqtau} states that several properties of half-spectral flow define it uniquely. This is an analog of Theorem~\ref{T:RobbinSalamon}.  If $A$ is a $\tau$-symmetric path, then the operator $D_A$, cf. \eqref{E:DA} is $\tau$-symmetric (in the sense of Section~\ref{S:tauindex}) with respect to some anti-unitary anti-involution $\tau$. We use the uniqueness of the spectral flow to show that $\ind_\tau D_A$ is equal to the half-spectral flow of $A$. This is an analog of the Robbin–Salamon Theorem~\ref{T:RobbinSalamon2} for our secondary invariants. This result should be compared to \cite[\S8]{BourneCareyLeschRennie22}, though we consider different paths of operators. We discuss the relationship of our result with \cite{BourneCareyLeschRennie22} in Section~\ref{SS:compareBCLR}.

\subsection{An anti-linear anti-involution}\label{SS:involtion}

As in Section~\ref{SS:nonequiv}, we assume that $W\hookrightarrow H$ is a compact embedding of a Hilbert space $W$ into a Hilbert space $H$. We identify $W$ with its image in $H$.
Let $\alpha:H\to H$ be an \textit{anti-linear anti-involution} and assume that $\alpha(W)=W$.

The map $\alpha$ defies anti-linear maps on the spaces of functions $\calW$ and $\calH$ of Section~\ref{SS:index}, which we also denote by $\alpha$.  Define the maps $\tau:\calW\to \calW$ and $\tau:\calH\to \calH$ by 
\begin{equation}\label{E:tau}
	\tau:\, \xi(t)\ \mapsto \ \alpha \xi(-t).
\end{equation}
By a slight abuse of the language we denote by the same letter the conjugation
$\tau:\calA(\RR,W,H)\to \calA(\RR, W,H)$, 
\begin{equation}\label{E:tau2}
		\tau:\, A(t)\ \mapsto \ \tau\, A(t)\, \tau^{-1} \ = \  \alpha\, A(-t)\, \alpha^{-1}.
\end{equation}

Let 
\begin{equation}\label{E:Atau}
	\At(\RR,W,H)\ := \ \big\{\,A\in \calA(\RR,W,H):\, \tau(A)=A\,\big\}.
\end{equation}
denote the set of $\tau$-invariants in $\calA$ and set set $\At^1:= \At\cap \calA^1$.   Recall that if $A\in \calA$ then $A(t)$ has a discrete spectrum and compact resolvent for all $t\in \RR$.

\subsection{The half-spectral flow of a $\tau$-invariant path of operators}\label{SS:tauflow}
Suppose $A\in \At$. If $t\in \RR$ is a crossing for $A$, then $-t$ is also a crossing for $A$ and 
\begin{equation}\label{E:tauGamma}
	\Gamma(A,t) \ = \  -\,\alpha\, \Gamma(A,-t) \, \alpha^{-1}.
\end{equation}
Hence, $\sign\Gamma(A,t)= -\sign\Gamma(A,-t)$ and, if 0 is a crossing,  $\sign\Gamma(A,0)=0$. We conclude that {\bf the spectral flow of any $A\in \At$ is equal to zero}. In this section we define  a {\em secondary invariant} of $A$ –– the $\ZZ_2$-valued half-spectral flow.

Before giving the definition and discussing the properties of the half-spectral flow, let us put it in a perspective of the works of Atiayh–Singer \cite{AtSinger69} and Carey-Phillips-Schulz-Baldes \cite{CareyPhillipsSB19}. 

Recall that the loop space $\Omega\calS(W,H)$ was introduced in Section~\ref{SS:periodic}. Define $\tau:\Omega\calS\to \Omega\calS$ by 
\[
	\tau\big(\tilde{A}(e^{it})\big) \ : = \ \alpha\, \tilde{A}\big(e^{-it}\big)\,\alpha, 
\]
and let
\[
	\Omega_\tau\calS(W,H) \ := \ 
	\big\{\,\tilde{A}\in \Omega\calS(W,H):\, \tau( \tilde{A})=\tilde{A}\,\big\},
\]
be the space of  $\tau$-invariant loops. 

Since the spectral flow of $\tilde{A}$ vanishes, it follows from \cite{AtSinger69} that is $\tilde{A}$ is homotopic in the space $\Omega\calS$ to a constant path. However, if the half-spectral flow does not vanish, this homotopy cannot be chosen inside the space $\Omega_\tau\calS$.

\subsection{The uniqueness of half-spectral flow}\label{SS:uniqtau}
The first main result of this section is the following analog of Theorem~\ref{T:uniqtau}:

\begin{theorem}\label{T:uniqtau}
There is a unique map $\mu_\tau:\At(\RR,W,H)\to \ZZ_2$ satisfying the following axioms:

\begin{enumerate}
\item{\em (Homotopy)} \ \  $\mu_\tau$ is constant on the connected components (in norm topology) of \linebreak $\At(\RR,W,H)$.
\item{\em (Constant)} \ \  If $A(t)$ is  independent of $t$, then $\mu_\tau(A)=0$.
\item{\em (Direct sum)}\ \  $\mu_{\tau_1\oplus\tau_2}(A_1\oplus A_2)= \mu_{\tau_1}(A_1)+\mu_{\tau_2}(A_2)$. 
\item{\em (Normalization)}\ \  For $W=H=\CC^2$, 
\[
	\alpha: \CC^2\ \to \ \CC^2, \qquad \alpha:\,(z_1,z_2)\ \mapsto \ (\bar{z}_2,-\bar{z}_1).
\]
and 
\begin{equation}\label{E:taunormalization}
	A(t)\ = \ 
	\begin{pmatrix}
		\arctan(t)&0\\0&-\arctan(t)
	\end{pmatrix},
\end{equation}
we have $\mu_\tau(A)=1$. 
\end{enumerate}
The number $\mu_\tau(A)$ is called the {\bf half-spectral flow} (or the {\em $\tau$-spectral flow}) of A. 
\end{theorem}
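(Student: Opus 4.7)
The strategy mirrors the proof of Theorem~\ref{T:RobbinSalamon}. I would construct an explicit crossing formula for $\mu_\tau$ on a dense subset of $\At$, verify it satisfies the four axioms, and then use the axioms to pin down its value on every path. Differentiating the $\tau$-equivariance $\alpha A(-t)\alpha^{-1}=A(t)$ at $t=0$ yields $\alpha\dot A(0)\alpha^{-1}=-\dot A(0)$, so $\ker A(0)$ is $\alpha$-invariant (hence even-dimensional by Lemma~\ref{L:even}), the crossing operator $\Gamma(A,0)$ anti-commutes with $\alpha$, and crossings at $t\neq 0$ come in mirror pairs $\{t,-t\}$. On the dense set of paths in $\At^1$ whose crossings are all regular, I would define
\[
  \mu_\tau(A) \ := \ \tfrac{1}{2}\,\dim\ker A(0) \ + \ \sum_{t>0}\dim\ker A(t) \pmod 2.
\]
The first summand is an integer by Kramers degeneracy, the second because regular crossings at $t\neq 0$ are discrete. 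A $\tau$-equivariant transversality argument paralleling Proposition~\ref{P:sfcrossings}(i) shows that such paths meet every connected component of $\At$.

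The key technical point is to verify that this formula is invariant under generic $\tau$-equivariant homotopies $\{A_s\}$ in $\At$. The crossing variety $\Sigma=\{(s,t)\in[0,1]\times\RR:\ker A_s(t)\neq 0\}$ is then a smooth $\tau$-symmetric $1$-dimensional submanifold, and its generic non-transverse slices in $s$ fall into exactly two types. A \emph{transit} event occurs when a $\tau$-symmetric component of $\Sigma$ crosses the $s$-axis transversally at some $(s_0,0)$: a pair of simple crossings at $\pm t(s)\to 0$ collides into a $2$-dimensional Kramers crossing at $t=0$ and re-emerges on the other side. The term $\sum_{t>0}\dim\ker A(t)$ drops by $1$ at $s=s_0$ while $\tfrac12\dim\ker A(0)$ simultaneously jumps by $1$, preserving $\mu_\tau$. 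A \emph{pair-birth} (or \emph{pair-death}) event occurs when an asymmetric component of $\Sigma$ has a tangent extremum at some $(s_0,t_0)$ with $t_0\neq 0$, and by mirror symmetry the same happens at $(s_0,-t_0)$; four simple crossings are then born or die, and $\sum_{t>0}\dim\ker A(t)$ changes by $\pm 2$, still preserving $\mu_\tau\bmod 2$. Higher-codimension degenerations---such as a Kramers crossing with vanishing $\Gamma$, or an isolated tangency of $\Sigma$ to the $s$-axis---are of codimension at least two in $\tau$-equivariant homotopies and can be avoided by a small perturbation. With homotopy invariance in hand, the remaining three axioms follow at once: a bijective constant path has no crossings and trivial kernel at $t=0$, giving $\mu_\tau=0$; the kernel of a direct sum splits, making both summands additive modulo $2$; and for the normalization example $\ker A(0)=\CC^2$ and $A(t)$ is invertible for $t\neq 0$, giving $\mu_\tau(A)=1$.

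For uniqueness, let $\mu'$ be another map satisfying the four axioms and let $A\in \At$. After a homotopy one may assume $A$ has only regular crossings. Reversing pair-birth moves eliminates all asymmetric pairs of simple crossings; reversing transit moves then shepherds each remaining mirror pair of simple crossings to the origin, merging it into a Kramers crossing there. The resulting path has its only crossing at $t=0$ with $\alpha$-invariant kernel of some even dimension $2k$. Choosing an $\alpha$-equivariant orthogonal decomposition of $\ker A(0)$ into $k$ Kramers pairs, and extending it $\tau$-equivariantly to $H$ in a manner compatible with the limits $A^\pm$, one exhibits $A$ as an orthogonal direct sum of $k$ copies of the normalization example together with a bijective constant path. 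The Constant, Direct sum, and Normalization axioms then force $\mu'(A)=k\pmod 2=\mu_\tau(A)$. The main obstacle is precisely this decomposition step: producing an $\alpha$-equivariant splitting of $H$ that simultaneously diagonalizes the Kramers crossing at $t=0$ into standard $2$-dimensional blocks \emph{and} respects the limits $A^\pm$. This quaternionic analog of the Robbin--Salamon linear-algebraic decomposition depends on Assumption~\ref{A:compact embedding} to reduce the bookkeeping to finite-dimensional spectral subspaces, together with a $\tau$-equivariant contractibility argument to extend the splitting to all of $H$.
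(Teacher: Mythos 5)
Your existence half is essentially the paper's: the formula $\tfrac12\dim\ker A(0)+\sum_{t>0}\dim\ker A(t)\bmod 2$ coincides with \eqref{E:tauspflow} (the two half-lines give equal counts by the mirror symmetry of crossings), and your transit/pair-birth analysis of the crossing variety is a plausible, if sketchier, substitute for the paper's proof of Proposition~\ref{P:sfcrossings tau}, which instead truncates the path at $\pm\epsilon$ and reduces everything to the known homotopy invariance of the ordinary $\ZZ$-valued spectral flow of the auxiliary paths $\tilde A_s$ and $\hat A_s$ (this also yields the invariance of $\rank P(A_s,0)\bmod 2$ without any equivariant Sard--Smale input). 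The verification of axioms (ii)--(iv) is correct.

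The uniqueness half, however, has a genuine gap exactly where you flag "the main obstacle": the claim that, after homotopy, $A$ can be exhibited as an orthogonal direct sum of $k$ copies of the normalization example and a bijective constant path. Nothing in the axioms lets $\mu'$ see $A$ unless such a decomposition is actually produced, and producing it requires an $\alpha$-equivariant splitting of $H$ that is preserved by $A(t)$ for \emph{all} $t$ (or a further homotopy to such a split path) and is compatible with the invertible limits $A^\pm$ --- an equivariant redo of the entire Robbin--Salamon normal-form machinery that you assert rather than prove. The paper avoids this step completely by a symmetrization trick: for a \emph{non}-symmetric $B\in\calA$ it forms $\tilde B(t)=B(t)\oplus\alpha B(-t)\alpha^{-1}$ (with the two factors swapped for $t>0$), observes that $B\mapsto\mu_\tau(\tilde B)$ satisfies the four Robbin--Salamon axioms on $\calA(\RR,W,H)$ with values in $\ZZ_2$, and hence equals the (unique) mod~$2$ reduction $\mu_2$ of the ordinary spectral flow. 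Then for $A\in\At$ with $A(0)$ invertible one has $\widetilde{A_{\le0}}=A\oplus A(0)$, so the direct-sum and constant axioms force $\mu_\tau(A)=\mu_2(A_{\le0})=\sum_{t<0}\rank P(A,t)\bmod 2$; the case of non-invertible $A(0)$ is handled by a small perturbation splitting $\ker A(0)=L\oplus\alpha L$. This pins down $\mu_\tau$ on every path using only the already-established non-equivariant uniqueness, with no equivariant decomposition of $H$ needed. I recommend replacing your normal-form reduction with this argument, or else supplying the missing decomposition lemma in full.
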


The proof of the theorem is postponed to Section~\ref{SS:pruniqtau}.

\subsection{The half-spectral flow}\label{SS:tauspflow}
We present an explicit construction of the half-spectral flow in terms of the crossing operators. Recall that $t\in\RR$ is called a crossing for $A(t)$ if $\ker A(t)\not=\{0\}$. We say that a crossing $t$ is \textit{isolated} if the exists $\delta>0$ such that $t$ is the only crossing for $A(t)$ on the interval $(t-\delta,t+\delta)$. Note that if $A\in A^1_\tau$ and $t$ is a regular crossing (cf. Section~\ref{SS:spflow}) then $t$ is an isolated crossing.   

First, assume that $A\in \calA_\tau$ has only isolated crossings. 
Since we are only interested in mod 2 value of the flow, we can replace the signature  of the crossing operator with the rank of the spectral projection $P(A,t)$:
\begin{equation}\label{E:Gamma=P}
		\sign \Gamma(A,t)\ = \ \rank P(A,t)\qquad \mod 2.
\end{equation}
The space $\ker P(A,0)$ is $\alpha$-invariant. It follows from  Lemma~\ref{L:even}  that $\rank P(A,0)$ is even. As opposed to the crossing operators $\Gamma(A,t)$, the projections $P(A,t)$ are defined for all $A\in \At$ (and not only for those in $\At^1$). If $A\in \At$  has only isolated crossings we define 
\begin{equation}\label{E:tauspflow}
	\spf_\tau (A) \ := \  \ \sum_{t< 0} \rank P(A,t) \ + \ \frac12\rank P(A,0), \qquad \mod 2. 
\end{equation}
We have the following analogue of Proposition~\ref{P:sfcrossings}: 
\begin{proposition}\label{P:sfcrossings tau}
If $A_0, A_1\in \calA_\tau(\RR,W,H)$ belong to the same connected component and have only isolated crossings then  $\spf_\tau A_0= \spf_\tau A_1$. 
\end{proposition}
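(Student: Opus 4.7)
The plan is to mimic the proof of Proposition~\ref{P:sfcrossings}(ii), tracking the $\tau$-symmetry throughout. Since $A_0$ and $A_1$ lie in the same connected component of $\calA_\tau(\RR,W,H)$, fix a continuous path $s\mapsto A^s$ connecting them. Mollification in $s$ preserves $\tau$-invariance (as $\tau$ does not act on the $s$ variable), so $\At^1$ is dense in $\calA_\tau$, and we may assume $A^s\in\At^1$ with $s\mapsto A^s$ of class $C^1$.

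The technical core is a Sard-type transversality step carried out \emph{inside} the $\tau$-invariant Banach space. After a small perturbation, we may assume that for all but finitely many $s\in[0,1]$ the operator $A^s$ has only isolated crossings of two ``regular'' types: either a $\tau$-related pair of simple (rank~$1$) crossings at some $\pm t_0$ with $t_0>0$, forced to appear in pairs by \eqref{E:tauGamma}, or a rank-$2$ Kramers crossing at $t=0$ that is transverse, in the sense that the Kramers-degenerate eigenvalue in question passes through $0$ with non-zero $s$-derivative. At finitely many exceptional $s$-values, the path encounters exactly one of two codimension-$1$ $\tau$-symmetric bifurcations. In type \textbf{(B1)}, a mirror pair of tangencies to $\lambda=0$ at $\pm t_0$ with $t_0>0$ creates or destroys four simple crossings simultaneously, two at $t<0$ and two at $t>0$. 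In type \textbf{(B2)}, a simple crossing at $-t_0<0$ and its mirror at $+t_0>0$ coalesce at $t=0$ into a rank-$2$ Kramers crossing (or the time-reverse). All other degenerations---non-transverse Kramers crossings, higher-rank crossings at $t\ne 0$, two simultaneous bifurcations---have codimension $\ge 2$ and can be avoided by a generic perturbation.

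Invariance of $\spf_\tau$ along the path is then a short case check. Between bifurcation times, crossings merely move continuously in $t$; by $\tau$-symmetry they can traverse $t=0$ only via a (B2) event, so the sum in \eqref{E:tauspflow} is locally constant in $s$. Across a (B1) bifurcation, the sum changes by $\pm 2\equiv 0\pmod 2$. Across a (B2) bifurcation, just before and after one counts the single negative-$t$ crossing, contributing $1$; at the bifurcation moment one counts $\tfrac12\rank P(A,0)=\tfrac12\cdot 2=1$. Thus the mod-$2$ contribution is preserved in every case.

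The main obstacle is the transversality step within the $\tau$-invariant subspace: one must verify that $\tau$-invariant perturbations are abundant enough to achieve transversality to every ``bad'' stratum and, in particular, to ensure that Kramers crossings at $t=0$ are transverse to $\lambda=0$ as a function of $s$. Once the list of generic codimension-$1$ bifurcations (B1)--(B2) is established, the invariance check is elementary.
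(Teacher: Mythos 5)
Your strategy---equivariant genericity plus a bifurcation case-check---is genuinely different from the paper's argument, and its central step is missing rather than merely deferred. The paper never perturbs the path to achieve transversality. It exploits the fact that the ordinary $\ZZ$-valued spectral flow is already known to be homotopy invariant and reduces to it by truncation: if $t=0$ is not a crossing of $A_{s_0}$, then $\spf_\tau(A_s)$ equals mod $2$ the ordinary spectral flow of the path obtained by freezing $A_s(t)$ for $t>0$; if $t=0$ is a crossing, one truncates at $t=-\epsilon$ and controls the term $\tfrac12\rank P(A_s,0)$ separately by computing the ordinary spectral flow of the path restricted to $[-\epsilon,\epsilon]$, which by the symmetry $P(A_s,t)=\alpha P(A_s,-t)\alpha^{-1}$ equals $\rank P(A_s,0)$ mod $2$. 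No regularity or genericity of crossings is needed---which is precisely why \eqref{E:tauspflow} is phrased in terms of $\rank P(A,t)$ rather than crossing operators.

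By contrast, the step you yourself flag as ``the main obstacle''---an equivariant Sard/transversality theorem in $\At(\RR,W,H)$ producing the stratification into simple mirror pairs, rank-$2$ Kramers crossings at $t=0$, and the codimension-one events (B1)--(B2)---carries essentially the whole weight of the proposition in your approach and is not routine: the $\tau$-constraint couples the behavior at $t$ and $-t$ and forces even-dimensional kernels at $t=0$, so the Robbin--Salamon genericity argument does not apply verbatim. Your description of the generic stratum is also internally inconsistent: a rank-$2$ Kramers crossing at $t=0$ cannot both be a ``regular type'' present for all but finitely many $s$ and the codimension-one event (B2) occurring at finitely many exceptional $s$; generically $A^s(0)$ is invertible and the $t=0$ crossings are exactly the (B2) moments. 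Finally, replacing $A_0$ and $A_1$ by generic perturbations presupposes that small $\tau$-invariant perturbations do not change $\spf_\tau$ of a path with isolated crossings, which is an instance of the statement being proved and needs its own local argument. The mod-$2$ bookkeeping in your last step is correct, but without the transversality theorem the proof does not close.
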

The proof is given in Section~\ref{SS:prsfcrossings tau}.  We now define the half-spectral flow of $A\in \calA_\tau(\RR,W,H)$ as the spectral flow of any of the operators $A_1$ with isolated crossings lying in  the same connected components of $\calA_\tau$ as $A$. By Proposition~\ref{P:sfcrossings tau} this definition is independent of the choice of $A_1$.
\begin{theorem}\label{T:tauspflowhommotopy}
The map $\spf_\tau:\At(\RR,W,H)\to \ZZ_2$ satisfies the conditions of Theorem~\ref{T:uniqtau} and, hence, is the half-spectral flow. 
\end{theorem}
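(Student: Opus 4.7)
The strategy is to verify the four axioms of Theorem~\ref{T:uniqtau} for the map $\spf_\tau$ defined by \eqref{E:tauspflow} and then extended to all of $\At$ via Proposition~\ref{P:sfcrossings tau}. By construction, $\spf_\tau(A)$ is computed by applying \eqref{E:tauspflow} to any representative with only isolated crossings in the same connected component of $\At$ as $A$; Proposition~\ref{P:sfcrossings tau} already makes $\spf_\tau$ well-defined and constant on connected components, which gives the (Homotopy) axiom for free. It remains to verify (Constant), (Normalization), and (Direct sum).

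For (Constant), if $A(t) \equiv B$ then $B = A^\pm$ is bijective by the definition of $\calA$, so $\ker A(t) = \{0\}$ for every $t$ and the sum in \eqref{E:tauspflow} is empty. For (Normalization), a direct computation with $\alpha(z_1, z_2) = (\bar{z}_2, -\bar{z}_1)$ shows that the path \eqref{E:taunormalization} satisfies $\alpha A(-t) \alpha^{-1} = A(t)$, so it lies in $\At$. Since $\pm\arctan(t) \neq 0$ for $t \neq 0$, the only crossing is the isolated one at $t=0$, where $\ker A(0) = \CC^2$ and $\rank P(A,0) = 2$. Formula \eqref{E:tauspflow} then gives $\spf_\tau(A) = 0 + \tfrac12 \cdot 2 \equiv 1 \pmod 2$.

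For (Direct sum), let $A_i \in \calA_{\tau_i}(\RR, W_i, H_i)$ and set $A = A_1 \oplus A_2$, $\tau = \tau_1 \oplus \tau_2$. After replacing each $A_i$ by an isolated-crossing representative in its connected component, the plan is to produce a small $\tau$-invariant perturbation of $A_1 \oplus A_2$ inside its own component so that the crossings of the two summands become disjoint subsets of $\RR \setminus \{0\}$. At each crossing $t \neq 0$ exactly one summand contributes, giving $\rank P(A,t) = \rank P(A_i, t)$; at $t = 0$ the kernel splits as $\ker A(0) = \ker A_1(0) \oplus \ker A_2(0)$ and each summand kernel is even-dimensional by Lemma~\ref{L:even}, so $\tfrac12 \rank P(A,0) = \tfrac12 \rank P(A_1, 0) + \tfrac12 \rank P(A_2, 0)$. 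Summing across $t < 0$ and adding the half-contribution at $t = 0$ yields $\spf_\tau(A) = \spf_{\tau_1}(A_1) + \spf_{\tau_2}(A_2)$ modulo $2$.

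The main obstacle is the $\tau$-equivariant perturbation used in (Direct sum): one must separate the crossings of $A_1$ from those of $A_2$ while remaining inside $\At$, which constrains admissible perturbations to respect the pairing of crossings at $\pm t$ implicit in \eqref{E:tauGamma}. This should follow from the same equivariant transversality tools needed in Section~\ref{S:proofs of three theorems} to prove Proposition~\ref{P:sfcrossings tau} and to guarantee the existence of isolated-crossing representatives in every connected component of $\At$; these are the genuinely technical ingredients on which the present axiomatic verification relies.
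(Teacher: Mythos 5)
Your proposal is correct and follows the same route as the paper: the homotopy axiom comes from Proposition~\ref{P:sfcrossings tau}, the constant axiom from the bijectivity of a constant path (so the sum in \eqref{E:tauspflow} is empty), and the normalization axiom from the explicit computation that the only crossing of \eqref{E:taunormalization} is at $t=0$ with $\rank P(A,0)=2$, giving $\spf_\tau(A)=1$. The one place you diverge is the direct sum axiom, which the paper simply declares clear --- and it is: you do not need any $\tau$-equivariant perturbation to separate the crossings of the two summands. Since $\ker\bigl(A_1(t)\oplus A_2(t)\bigr)=\ker A_1(t)\oplus\ker A_2(t)$ for \emph{every} $t$, one has $\rank P(A_1\oplus A_2,t)=\rank P(A_1,t)+\rank P(A_2,t)$ at every crossing, including coincident ones and $t=0$, so formula \eqref{E:tauspflow} is termwise additive as it stands (the set of crossings of $A_1\oplus A_2$ is the union of the two finite crossing sets, hence still consists of isolated points). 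The equivariant transversality machinery you invoke is therefore not needed for axiom (iii); the only place where a genuine equivariant perturbation argument is required is the one you correctly flag as an input, namely the existence of an isolated-crossing representative in each connected component of $\At$, which underlies the definition of $\spf_\tau$ and Proposition~\ref{P:sfcrossings tau} themselves.
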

The proof of the theorem is given in Section~\ref{SS:prtauspflowhommotopy}.

\subsection{The $\tau$–index of $D_A$}\label{SS:tauindex}
Let $A\in \At$ and let $D_A$ be as in \eqref{E:DA}. Then 
\begin{equation}\label{E:tauDAtau}
    	\tau\, D_A\,\tau^{-1} \ = \ D_A^*,
\end{equation}
where $\tau$ is defined in \eqref{E:tau}. Hence $D_A$ is $\tau$-invariant and its $\tau$-index is defined by \eqref{E:tauindex}, 
\[
    \ind_\tau D_A \ = \ \dim \ker D_A \qquad \mod \ 2.
\]

We have the following analog of the Robbin-Salamon Theorem~\ref{T:RobbinSalamon2}:

\begin{theorem}\label{T:tauRS}
The map $\ind_\tau: \At(\RR,W,H)\to \ZZ_2$ satisfies the conditions of Theorem~\ref{T:uniqtau} and, hence, is equal to the half-spectral flow:
 \begin{equation}\label{E:tauindex=tausf}
 	\ind_\tau \Big(\,\frac{d}{dt}+ A(t)\,\Big) \ = \  \spf_\tau A  \qquad \mod 2.
 \end{equation}
\end{theorem}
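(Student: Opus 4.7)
The plan is to apply the uniqueness Theorem~\ref{T:uniqtau}: I will verify that the assignment $A\mapsto \ind_\tau D_A$ satisfies the four axioms (Homotopy, Constant, Direct sum, Normalization) and thereby must coincide with $\spf_\tau A$. The map is well defined on $\At(\RR,W,H)$: identity \eqref{E:tauDAtau} places $D_A$ in the Banach space $\calB_\tau(\calW,\calH)$ of $\tau$-symmetric operators, and $D_A$ is Fredholm by \cite[Theorem~3.12]{RobbinSalamon95}, so $\ind_\tau D_A\in\ZZ_2$ is defined via \eqref{E:tauindex}.

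For the homotopy axiom, a norm-continuous path $s\mapsto A_s$ in $\At$ produces a norm-continuous path $s\mapsto D_{A_s}$ in $\calB_\tau(\calW,\calH)$: this is precisely the continuity underlying Robbin--Salamon's proof in the ordinary setting, with $\tau$-invariance preserved automatically. Theorem~\ref{T:homotopyindex} then yields local constancy of $\ind_\tau D_{A_s}$. For the constant axiom, if $A(t)\equiv A_0$ with $A_0:W\to H$ self-adjoint, bijective and with compact resolvent, then $0\notin\sigma(A_0)$; decomposing any $\calW$-kernel element of $D_A$ in the eigenbasis of $A_0$, the positive-eigenvalue components fail to be $L^2$ near $-\infty$ and the negative-eigenvalue components fail to be $L^2$ near $+\infty$, forcing $\ker D_A=0$. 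For the direct sum axiom, $D_{A_1\oplus A_2}=D_{A_1}\oplus D_{A_2}$, so $\dim\ker$ is additive and its $\ZZ_2$-reduction is as well.

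The only genuine calculation is the normalization. For the model $H=\CC^2$, $\alpha(z_1,z_2)=(\bar z_2,-\bar z_1)$ and $A(t)=\mathrm{diag}(\arctan t,-\arctan t)$, the operator $D_A$ decouples into the two scalar problems $\tfrac{d}{dt}\pm\arctan t$. Setting $F(t):=\int_0^t\arctan s\,ds=t\arctan t-\tfrac12\log(1+t^2)$, one has $F(t)\sim\tfrac{\pi}{2}|t|$ as $|t|\to\infty$. Hence $\ker\bigl(\tfrac{d}{dt}+\arctan t\bigr)$ is spanned by $e^{-F(t)}$, which decays exponentially in both directions and lies in $\calW$, while $\ker\bigl(\tfrac{d}{dt}-\arctan t\bigr)$ is spanned by $e^{F(t)}$, which grows like $e^{\pi|t|/2}$ and is not in $\calW$. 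Therefore $\dim\ker D_A=1$ and $\ind_\tau D_A=1\pmod 2$, matching the normalization.

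The potentially delicate step is the homotopy axiom, but the required continuity of $A\mapsto D_A$ and the Fredholm property have been established by Robbin--Salamon, and Theorem~\ref{T:homotopyindex} supplies the homotopy invariance of the $\tau$-index on $\calB_\tau(\calW,\calH)$; what remains is simply to note that $\tau$-invariance of $A_s$ forces $\tau$-symmetry of $D_{A_s}$ throughout the path, which is immediate from \eqref{E:tauDAtau}. With all four axioms verified, Theorem~\ref{T:uniqtau} forces $\ind_\tau D_A=\spf_\tau A$, which is \eqref{E:tauindex=tausf}.
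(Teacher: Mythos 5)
Your proposal is correct and follows exactly the paper's route: verify the four axioms of Theorem~\ref{T:uniqtau} for $A\mapsto\ind_\tau D_A$ (homotopy via Theorem~\ref{T:homotopyindex}, the constant axiom via non-square-integrability of $e^{-A_0t}\xi(0)$, direct sum trivially) and invoke uniqueness. The only difference is that you verify the normalization axiom by explicitly computing $\ker D_A$ for the model path via $F(t)=\int_0^t\arctan s\,ds$, whereas the paper delegates this to Lemma~2.7 of \cite{BrSaeedi24index}; your computation is correct and self-contained.
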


The proof of Theorem~\ref{T:tauRS} is presented in Section~\ref{SS:prtauRS}.

\subsection{Comparison with the spectral flow of a path of skew-adjoint operators}\label{SS:compareBCLR}
For $A(t)\in \At(\RR,W,H)$ suppose for simplicity that the operator $A(0)$ is invertible.  Set 
\begin{equation}\notag
    A_{<0}(t):\ = \ \begin{cases}
        A(t)\quad&\text{for}\ \ t\le0;\\
        A(0)\quad&\text{for}\ \ t>0,
    \end{cases}
    \qquad\qquad 
    A_{>0}(t):\ = \ \begin{cases}
        A(t)\quad&\text{for}\ \ t\ge0;\\
        A(0)\quad&\text{for}\ \ t<0.
    \end{cases}
\end{equation}
Consider the ``doubling" of the path $A$:
\begin{equation}\notag
    \hat{A}(t)\ := \ \begin{pmatrix}
        0&A_{<0}(t)\\
        -\alpha\,A_{>0}(-t)\,\alpha^{-1}&0
    \end{pmatrix}.
\end{equation}
This is a path of skew adjoint operators. 
The $\ZZ_2$-valued spectral flow of $\hat{A}$ considered in \cite{BourneCareyLeschRennie22} is exactly equal to $\spf_\tau A$. Theorem~8.1 of  \cite{BourneCareyLeschRennie22} expresses this spectral flow as a $\ZZ_2$-valued index of a certain ``suspension" operator of the form similar to $\frac{d}{dt}+A$. Using the gluing formula for the $\tau$-index, \cite{BrSaeediYan25}, one can show that the latter index is equal to $\ind_\tau \big(\frac{d}{dt}+ A(t)\big)$. Thus the half-spectral flow of $A$ is equal to the $\tau$-index of $\frac{d}{dt}+A$. Combining it with Theorem 8.1 of \cite{BourneCareyLeschRennie22}, we see that the $\ZZ_2$-valued indexes of $\frac{d}{dt}+A$ and of the operator in \cite{BourneCareyLeschRennie22} coincide. This can also be shown by a direct, but non-trivial argument. 

In conclusion, our Theorem~\ref{T:tauRS} is closely related to Theorem 8.1 of \cite{BourneCareyLeschRennie22}, but the relationship is not very straightforward.  This is why we preferred to give a direct proof of our result in the next section, by adopting the original argument of Robbin-Salamon \cite{RobbinSalamon95} to our $\ZZ_2$-valued invariants.

\section{Proof of $\ZZ_2$-valued version of the Robbin-Salamon theorem}\label{S:proofs of three theorems}

This section is dedicated to the proofs of Proposition~\ref{P:sfcrossings tau} and Theorems~\ref{T:tauspflowhommotopy}, \ref{T:tauRS}, and \ref{T:uniqtau}.

\subsection{Proof of Proposition~\ref{P:sfcrossings tau}}\label{SS:prsfcrossings tau}

Suppose $A_0$ and $A_1$ lie in the same connected component of $\calA_\tau(\RR,W,H)$ and let $A_s$ ($s\in[0,1]$) be a continuous path in $\calA_\tau$  connecting between them such that all $A_s$ have only isolated crossings.  We need to prove that $\spf_\tau(A_s)$ is independent of $s$. If $t=0$ is not a crossing point for some $A_{s_0}$, then there exists $\epsilon>0$ such that $0$ is not a crossing for all $A_s$, $s\in (s_0-\epsilon,s_0+\epsilon)$. Then the path
\[
	\tilde{A}_s(t) \ := \ 
	\begin{cases}
	 A_s(t) \quad&\text{for}\quad t\le 0,\\
	 A_s(0)\quad&\text{for}\quad t>0,
	\end{cases}
\]
is in $\calA(\RR,W,H)$ for $s\in  (s_0-\epsilon,s_0+\epsilon)$. The $\tau$-spectral flow of $A_s$ is equal modulo 2 to the usual $\ZZ$-valued spectral flow  $\spf(\tilde{A}_s)$. Hence, it is independent of $s\in   (s_0-\epsilon,s_0+\epsilon)$ by the stability of the usual spectral flow under homotopy. 

If $0$ is a crossing for $A_{s_0}$, then there exist $\epsilon, \delta>0$ such that
\begin{enumerate}
    \item there are no non-zero eigenvalues of $A_{s_0}(0)$ in the interval $(-\delta,\delta)$;
    \item there are no crossings $t$ of $A_{s_0}$ with $0<|t|\le\epsilon$;
    \item $\pm\delta$ are not in the spectrum of $A_s(t)$ for all $s\in (s_0-\epsilon,s_0+\epsilon)$, $t\in (-\epsilon,\epsilon)$.
\end{enumerate}
Then the paths 
\[
	\tilde{A}_s(t) \ := \ 
	\begin{cases}
	 A_s(t) \quad&\text{for}\quad t\le -\epsilon,\\
	 A_s(-\epsilon)\quad&\text{for}\quad t>-\epsilon,
	\end{cases}
\]
and 
\[
	\hat{A}_s(t) \ := \ 
	\begin{cases}
	 A_s(-\epsilon) \quad&\text{for}\quad t<-\epsilon,\\
  A_s(t) \quad&\text{for}\quad -\epsilon\le t\le \epsilon,\\
	 A_s(\epsilon)\quad&\text{for}\quad t>\epsilon,
	\end{cases}
\]
are in $\calA(\RR,W,H)$ for all $s\in (s_0-\epsilon,s_0+\epsilon)$, $t\in (-\epsilon,\epsilon)$. By \eqref{E:tauspflow} , we have 
\begin{equation}\label{E:tauspflowtildA}
	\spf_\tau (A_s) \ = \  \spf(\tilde{A}_s) \ + \ \sum_{-\epsilon<t< 0} \rank P(A_s,t) \ + 
 \frac12\rank P(A_s,0), \qquad \mod 2.
\end{equation}
Since $t=0$ is not a crossing for  $\tilde{A}_s$ ($s\in (s_0-\epsilon,s_0+\epsilon)$, $\spf(\tilde{A}_s)$ is independent of $s$ on this interval.

From  \eqref{E:sfreguar} and \eqref{E:Gamma=P}, we see that the mod 2 reduction of  usual $\ZZ$-valued spectral of $\hat{A}_s$ is given by
\[
    \spf (\hat{A}_s) \ = \  \ \sum_{-\epsilon<t< 0} \rank P(A_s,t) 
 \ + \ 
 \rank P(A_s,0) \ + \sum_{0<t< \epsilon} \rank P(A_s,t) \qquad\text{mod}\quad 2.
\]
By \eqref{E:tau2}, $P(A_s,t)= \alpha\, P(A_s,-t)\,\alpha^{-1}$. It follows that
\[
\sum_{-\epsilon<t< 0} \rank P(A_s,t) 
 \ = \ \sum_{0<t< \epsilon} \rank P(A_s,t)
\]
and $\spf(\hat{A}_{s})=  \rank P(A_{s},0)$ mod 2. From the stability of the usual spectral flow under homotopy, we have $\spf(\hat{A}_{s_0})= \spf(\hat{A}_s)$ and, hence, 
\begin{equation}\label{E:spflowhatA} 
 \rank P(A_s,0)\ = \  \rank P(A_{s_0},0), \qquad \text{mod}\quad 2.
\end{equation}

Substituting this equality into \eqref{E:tauspflowtildA} and using that $\spf(\tilde{A}_s)$ is independent of $s\in (s_0-\epsilon,s_0+\epsilon)$, we conclude that 
\[
    \spf_\tau (A_s) \ = \  \spf(\tilde{A}_{s_0}) \  +\  
 \frac12\rank P(A_{s_0},0) \ = \ \spf_\tau(A_{s_0}), \qquad \mod 2.
\]
\hfill$\square$

We now pass to the proof of Theorems~\ref{T:tauspflowhommotopy} and \ref{T:tauRS}.
\subsection{The normalization axiom}\label{SS:findim}
Let $W=H= \CC^2$ and let $\alpha$ and $A$ be as in Theorem~\ref{T:uniqtau}(iv). Then the eigenvalues of $A(t)$ are $\pm\arctan(t)$. By \eqref{E:tauspflow}, $\spf_\tau(A)=1$. Hence, we obtain the following

\begin{lemma}\label{L:spfnormalization}
$\spf_\tau$ satisfies the normalization condition of Theorem~\ref{T:uniqtau}.
\end{lemma}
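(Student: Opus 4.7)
The plan is a direct computational verification from the definition \eqref{E:tauspflow}. First I would confirm that the prescribed $A$ actually belongs to $\calA_\tau(\RR,\CC^2,\CC^2)$. The path is norm-continuous because $\arctan$ is continuous; the limits $A^\pm = \pm(\pi/2)\operatorname{diag}(1,-1)$ exist and are invertible as operators on $\CC^2$, so $A \in \calA(\RR,\CC^2,\CC^2)$. The $\tau$-invariance $\alpha A(-t)\alpha^{-1} = A(t)$ is an elementary matrix check: using $\alpha^2=-1$ one computes $\alpha^{-1}(z_1,z_2) = (-\bar z_2,\bar z_1)$, so
\[
	\alpha\,A(-t)\,\alpha^{-1}(z_1,z_2)\ = \ \alpha\bigl(\arctan(t)\bar z_2,\ \arctan(t)\bar z_1\bigr)\ = \ \bigl(\arctan(t)z_1,\ -\arctan(t)z_2\bigr),
\]
which equals $A(t)(z_1,z_2)$. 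Hence $A\in\calA_\tau$.

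The second step is to identify the crossings. The eigenvalues of $A(t)$ are $\pm\arctan(t)$, so $\ker A(t)\neq\{0\}$ precisely when $t=0$, at which point $A(0)=0$, $\ker A(0)=\CC^2$, and $\rank P(A,0)=2$. There are no other crossings, so in particular none with $t<0$, and the single crossing at the origin is automatically isolated. Substituting into \eqref{E:tauspflow},
\[
	\spf_\tau(A)\ =\ \sum_{t<0}\rank P(A,t)\ +\ \tfrac{1}{2}\rank P(A,0)\ =\ 0\ +\ 1\ =\ 1\qquad \mod 2,
\]
which is exactly the normalization value demanded by Theorem~\ref{T:uniqtau}(iv).

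There is essentially no obstacle in this argument; the only point worth flagging is conceptual rather than technical. The kernel at the unique crossing $t=0$ is two-dimensional, not one-dimensional, reflecting Kramers' degeneracy (Lemma~\ref{L:even}) applied to the $\alpha$-invariant subspace $\ker A(0)$. The half-contribution $\tfrac12\rank P(A,0)$ in \eqref{E:tauspflow} is precisely the device that converts this doubled kernel into the single $\ZZ_2$-unit of half-spectral flow, which is why the model path $A$ is built from two mirror-symmetric branches of eigenvalues crossing zero simultaneously.
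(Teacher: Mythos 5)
Your proposal is correct and follows the same route as the paper: the paper's proof simply notes that the eigenvalues of $A(t)$ are $\pm\arctan(t)$ and reads off $\spf_\tau(A)=1$ from \eqref{E:tauspflow}, exactly the computation you carry out (with the additional, correct, verifications that $A\in\calA_\tau$ and that $t=0$ is the unique, isolated crossing with $\rank P(A,0)=2$).
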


Also, by Lemma~2.7 of \cite{BrSaeedi24index} we have

\begin{lemma}\label{L:indexnormalizationtau}
The map $A\mapsto \ind_\tau D_A$ satisfies the normalization condition of Theorem~\ref{T:uniqtau}.
\end{lemma}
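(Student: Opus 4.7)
The plan is a direct computation: solve the defining first-order ODE for $D_A\xi = 0$, count the independent $L^2$ solutions, and reduce modulo $2$. No homotopy invariance or abstract machinery is needed because the statement is purely a calculation of the $\tau$-index on one explicit path.

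The first observation is that for $A(t) = \mathrm{diag}(\arctan t,\, -\arctan t)$ the suspension operator $D_A = \frac{d}{dt} + A(t)$ is diagonal on $\calH = L^2(\RR,\CC^2)$, so it splits as the direct sum $D_+ \oplus D_-$ of the scalar operators
\[
    D_\pm \ := \ \frac{d}{dt}\ \pm\ \arctan(t),
\]
each acting on $W^{1,2}(\RR,\CC)$. Because $\arctan(\pm\infty) = \pm\pi/2$ are nonzero, Theorem~3.12 of \cite{RobbinSalamon95} (already invoked in Section~\ref{SS:index}) guarantees that $D_\pm$, and hence $D_A$, are Fredholm, so $\dim\ker D_A$ is finite. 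A one-line verification using $\alpha\,A(-t) = A(t)\,\alpha$ shows that $D_A$ really is $\tau$-symmetric for the $\tau$ built from the $\alpha$ of axiom (iv), so that $\ind_\tau D_A$ is defined.

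Next I would integrate each scalar ODE. Solving $D_+\phi=0$ gives
\[
    \phi_+(t) \ = \ \exp\Bigl(-\int_0^t \arctan s\,ds\Bigr) \ = \ \sqrt{1+t^2}\; e^{-t\arctan t}.
\]
Since $t\arctan t \sim |t|\pi/2$ as $t\to\pm\infty$, this solution decays like $|t|\,e^{-|t|\pi/2}$ and lies in $L^2(\RR)$, giving $\dim\ker D_+ = 1$. The same computation for $D_-$ produces $\phi_-(t) = e^{t\arctan t}/\sqrt{1+t^2}$, which grows like $e^{|t|\pi/2}/|t|$ at both ends and is not in $L^2$; hence $\ker D_- = \{0\}$. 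Consequently $\dim\ker D_A = 1$ and
\[
    \ind_\tau D_A \ = \ 1 \pmod{2},
\]
which is exactly the value required by the normalization axiom (iv) of Theorem~\ref{T:uniqtau}.

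There is no serious obstacle here: the whole argument reduces to separation of variables for a scalar first-order ODE, and the only nontrivial check is the exponential decay/growth rate at $\pm\infty$, which is immediate from the asymptotics of $t\arctan t$. The mildly delicate bookkeeping point is making sure the $\tau$-symmetry is set up correctly so that $\ind_\tau D_A$ is a well-defined $\ZZ_2$-invariant in the first place; once that is in place, the Fredholm property supplied by Robbin--Salamon makes the calculation entirely elementary.
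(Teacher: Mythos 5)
Your computation is correct, and it is self-contained where the paper is not: the paper disposes of Lemma~\ref{L:indexnormalizationtau} by citing Lemma~2.7 of \cite{BrSaeedi24index}, whereas you carry out the underlying calculation explicitly. The diagonal splitting $D_A=D_+\oplus D_-$, the antiderivative $\int_0^t\arctan s\,ds=t\arctan t-\frac12\ln(1+t^2)$, and the resulting asymptotics $\phi_+(t)\sim |t|\,e^{-\pi|t|/2}$ and $\phi_-(t)\sim e^{\pi|t|/2}/|t|$ are all right, so $\dim\ker D_A=1$ and $\ind_\tau D_A=1$ as required. Two points are worth making explicit in your write-up: first, the kernel of an odd symmetric operator is not $\tau$-invariant (by \eqref{E:tauDAtau}, $\tau$ maps $\ker D_A$ to $\ker D_A^*$), so Kramers degeneracy (Lemma~\ref{L:even}) does not force the kernel to be even-dimensional and the odd answer is consistent with the formalism; second, the appeal to \cite[Theorem~3.12]{RobbinSalamon95} for Fredholmness is not actually needed for the kernel count, since a scalar first-order linear ODE has at most a one-dimensional solution space regardless. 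What the citation-based proof buys is brevity; what yours buys is that the normalization axiom is verified within the paper by the same elementary separation-of-variables argument that underlies the Robbin--Salamon normalization in the $\ZZ$-valued case.
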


\subsection{A constant path}\label{SS:constantpath}
Let us consider the case when $A(t)=A_0\in \At$ is independent of $t$. Then $A_0$ is bijective.  It follows that $\spf_\tau(A)=0$, i.e. $\spf_\tau$ satisfy axiom (ii) of Theorem~\ref{T:uniqtau}. 

Further, if $D_A\xi(t)= 0$, then $\xi(t)= e^{-A_0t}\xi(0)$, which is not square-integrable, unless $\xi(0)=0$. Hence, $\ker{}D_A= \{0\}$. The following lemma summarizes this subsection:

\begin{lemma}\label{L:constanttau}
The maps $A\mapsto \spf_\tau(A)$ and $A\to \ind_\tau D_A$ satisfy axiom (ii) of Theorem~\ref{T:uniqtau}.
\end{lemma}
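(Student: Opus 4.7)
My plan is to handle the two claims separately, since they are both short but involve different tools. In both cases the key observation is that $A \in \calA_\tau \subset \calA$ forces $A^+ = A^- = A_0$ (the common value of the constant path) to be a \emph{bijective} self-adjoint operator $W \to H$, so in particular $0 \notin \sigma(A_0)$ and $A_0$ has a spectral gap at the origin.

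For the half-spectral flow, I would simply observe that bijectivity of $A_0$ means $\ker A(t) = \ker A_0 = \{0\}$ for every $t \in \RR$, so the path has no crossings. The constant path trivially has only isolated crossings (there are none), and the defining formula \eqref{E:tauspflow} is an empty sum, giving $\spf_\tau(A) = 0$.

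For the $\tau$-index of $D_A$, the plan is to solve the ODE $\frac{d}{dt}\xi(t) + A_0 \xi(t) = 0$ on the domain $\calW$. Any distributional solution is of the form $\xi(t) = e^{-tA_0}\xi(0)$, interpreted via the functional calculus of the self-adjoint operator $A_0$. I would decompose $H = H_+ \oplus H_-$ according to the positive/negative spectral projections of $A_0$, which is possible precisely because $0 \notin \sigma(A_0)$, and write $\xi(0) = \xi_+ + \xi_-$. On $H_+$, the factor $e^{-tA_0}$ is bounded for $t \ge 0$ but grows exponentially as $t \to -\infty$, and symmetrically on $H_-$ it grows as $t \to +\infty$. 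Combined with the spectral gap, $\|\xi(t)\|_H$ grows at least exponentially in one of the two directions unless $\xi_+ = \xi_- = 0$. Hence membership in $\calH = L^2(\RR,H)$ forces $\xi \equiv 0$, so $\ker D_A = \{0\}$ and $\ind_\tau D_A = \dim \ker D_A \mod 2 = 0$.

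I do not expect a main obstacle here; the only minor care needed is justifying that the weak solutions in $\calW$ really are given by the functional-calculus formula $e^{-tA_0}\xi(0)$, which is standard for self-adjoint generators with spectral gap. The $\tau$-invariance of $A_0$ plays no role in the argument beyond placing us in the right space, which is consistent with the fact that axiom (ii) of Theorem~\ref{T:uniqtau} makes no reference to $\tau$.
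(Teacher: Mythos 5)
Your proposal is correct and follows essentially the same route as the paper: bijectivity of the constant value $A_0$ (forced by membership in $\calA$) kills all crossings, so $\spf_\tau(A)=0$, and the kernel of $D_A$ consists of solutions $\xi(t)=e^{-tA_0}\xi(0)$, which the spectral gap at $0$ forces to grow exponentially in at least one direction unless $\xi(0)=0$. The paper states this more tersely; your added detail on the positive/negative spectral decomposition is the standard justification it leaves implicit.
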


\subsection{Proof of Theorem~\ref{T:tauspflowhommotopy}}\label{SS:prtauspflowhommotopy}
By Proposition~\ref{P:sfcrossings tau}, $\spf_\tau$ satisfies the homotopy axiom (i) of Theorem~\ref{T:uniqtau}. By Lemmas~\ref{L:spfnormalization} and \ref{L:constanttau}, $\spf_\tau$ satisfies axioms (ii) and (iv) of Theorem~\ref{T:uniqtau}.  It is clear, that it also satisfies the direct sum axiom (iii). 
\hfill$\square$
\subsection{Proof of Theorem~\ref{T:tauRS}}\label{SS:prtauRS}
By Lemmas~\ref{L:indexnormalizationtau} and \ref{L:constanttau}, $\ind_\tau D_A$ satisfies axioms (ii) and (iv) of Theorem~\ref{T:uniqtau}.  It is clear, that it also satisfies the direct sum axiom (iii). By Theorem~\ref{T:homotopyindex} it also satisfies the homotopy axiom. Thus it coincides with the half-spectral flow by Theorem~\ref{T:uniqtau}.
\hfill$\square$
    
\subsection{The proof of Theorem~\ref{T:uniqtau}}\label{SS:pruniqtau}
Theorem~\ref{T:tauspflowhommotopy} states that $\spf_\tau$ satisfies the axioms of Theorem~\ref{T:uniqtau}. Hence, the existence of $\mu_2$ is proven. 

To prove the uniqueness we need to show that any $\mu_2$, satisfying the axioms of Theorem~\ref{T:uniqtau}, is equal to $\spf_\tau$.

A verbatim repetition of the  Robbin-Salamon proof of Theorem~\ref{T:RobbinSalamon}, cf. \cite[\S4]{RobbinSalamon95}, shows that there exists a unique map $\mu_2:\calA(\RR,W,H)\to \ZZ_2$, satisfying the axioms (i)-(iv) of Theorem~\ref{T:RobbinSalamon}. This $\mu_2$ is just the mod 2 reduction of $\mu$. 

For any $B\in\calA$  the {\em symmetrization} of $B$ is the path 
\[
    \tilde{B}(t)\ := \ 
    \begin{cases}
        B(t)\oplus \alpha\, B(-t)\, \alpha^{-1} \qquad&\text{for}\quad t\le0\\
        \alpha\, B(-t)\, \alpha^{-1}\oplus B \qquad&\text{for}\quad t>0
    \end{cases}
    \ \  \in \ \ \At(\RR,W\oplus W,H\oplus H).
\]
We claim that the map $B\mapsto \mu_\tau(\tilde{B})$ satisfies all the axioms for $\mu_2$. Indeed, if $B_1$ is homotopic to $B_2$ in $\calA$, then $\tilde{B}_1$ is homotopic to $\tilde{B}_2$ in $\At$, which implies the homotopy axiom. The rest of the axioms follow immediately from the corresponding axioms for $\mu_\tau$. It follows now from the uniqueness  of $\mu_2$ that 
\begin{equation}\label{E:mu2=mutau}
    \mu_2(B)\ = \ \mu_\tau(\tilde{B}).
\end{equation}

We want to compute $\mu_\tau(A)$  in terms of $\mu_2$ for any $A\in \At$. First, consider a path $A\in \At$, which is bijective at 0 and set
\[
	A_{\le0}(t)\ := \ 
	\begin{cases}
	A(t)\quad&\text{for}\quad t\le 0,\\
	A(0)\quad&\text{for}\quad t> 0.
	\end{cases}
\]
Then $A_{\le0}\in \calA(\RR,W,H)$ and the symmetrization $\tilde{A}_{\le0}$ is the direct sum of $A$ and the constant path $A(0)$:
\[
    \tilde{A}_{\le0} \ = \ A\oplus A(0).
\]
Thus, from the direct sum and the constant axiom for $\mu_\tau$ we conclude that $\mu_\tau(A)= \mu_\tau(\tilde{A}_{\le0})$. Using \eqref{E:mu2=mutau}, we get
\begin{equation}\label{E:mutau=mu2}
	\mu_\tau(A)\ = \ \mu_\tau(\tilde{A}_{\le 0}) \ = \ \mu_2(A_{\le0})\ \equiv \ \sum_{t< 0} \rank P(A,t)
	 \ = \ \spf_\tau(A).
\end{equation}
Here the third equality holds modulo 2 and follows from Theorem~\ref{T:spflow=mu} and \eqref{E:Gamma=P}.

Consider a general $A\in \At$. Then, by Lemma~\ref{L:even}, the kernel of $A(0)$ is even dimensional, and there is a subspace $L$ in it such that $\ker A(0)\ = \ L\oplus\alpha{L}$. Then there exists $\epsilon>0$ and  a path $A'\in \At$ homotopic to $A$, such that $A'(t)$ is bijective for all $t\in (-\epsilon,\epsilon)$, $\ker{}A(-\epsilon)= L$, and $\ker{}A(\epsilon)= \tau(L)$.  Then 
\[
	\rank P(A',-\epsilon)\ = \  \dim L \ = \  \frac12\,\rank P(A,0). 
\]
It now follows from \eqref{E:mutau=mu2}, that $\mu_\tau(A)$ is given by \eqref{E:tauspflow}.
\hfill$\square$

\section{The $\ZZ_2$-valued index and the $\ZZ_2$-valued spectral flow on a circle}\label{S:APScircle}

In this section, we consider a $\tau$-invariant family of self-adjoint operators  
 $A(t)$, parametrized by $t\in S^1$ and prove an analog of Theorem~\ref{T:tauRS}
for it. For the usual $\ZZ$-valued spectral flow this result basically follows from the APS index theorem, cf. \cite[\S7]{APS3}. This theorem is unavailable for the $\tau$-index. Instead, we deduce the result for the periodic families from Theorem~\ref{T:tauRS}.

Throughout this section, the Hilbert spaces $W$ and $H$ are as in Section~\ref{SS:nonequiv}.

\subsection{Families of operators on an interval and on a circle}\label{SS:intervalfamily}
Let $\calA([-\pi,\pi],W,H)$ be the space of norm continuous maps $A:[-\pi,\pi]\to \calS(W,H)$. For $A\in \calA([-\pi,\pi],W,H)$ set
\begin{equation}\label{E:tildeAextension}
    \tilde{A}(t)\ := \ \begin{cases}
        A(-\pi),\qquad &\text{for}\quad t<-\pi,\\
        A(t),\qquad &\text{for}\quad -\pi\ge t\le\pi,\\
        A(\pi),\qquad &\text{for}\quad t>\pi.
    \end{cases}
\end{equation}
Then $\tilde{A}\in \calA(\RR,W,H)$ iff $A(\pi)$ and $A(-\pi)$ are bijective. 

Let 
\[
    \calA(S^1,W,H)\ := \ \big\{\, A\in \calA([-\pi,\pi],W,H):\, A(-\pi)=A(\pi)\,\big\}.
\]
We view elements of $\calA(S^1,W,H)$ as families of operators on a circle. 

\sloppy Suppose that $\alpha:H\to H$ is an anti-unitary anti-involution. As in Section~\ref{SS:involtion}, we define anti-linear anti-involution $\tau$ on the spaces $\calA([-\pi,\pi],W,H)$ and $\calA(S^1,W,H)$. Let    $\At([-\pi,\pi],W,H)$ and $\At(S^1,W,H)$ denote the subspaces of $\tau$-invariant elements in \linebreak[4]
$\calA([-\pi,\pi],W,H)$ and $\calA(S^1,W,H)$ respectively.  

Note that if $A\in \calA_\tau(S^1,W,H)$ then 
\begin{equation}\label{E:A(pi) odd symmetric}
    A(-\pi)\ = \ A(\pi)\ = \ \alpha\,A(\pi)\, \alpha^{-1},
\end{equation}
i.e. the operator $A(\pi)= A(-\pi)$ is odd symmetric.

\subsection{The half-spectral flow on the circle}\label{SS:halfspf circle}
For $A\in \calA_\tau(S^1,W,H)$  which have only isolated crossings, we define the half-spectral flow by 
\begin{equation}\label{E:tauspflowcircle}
	\spf_\tau (A) \ := \  \ \sum_{-\pi<t< 0} \rank P(A,t) \ + \ \frac12\rank P(A,0)\ + \ \frac12\rank P(A,-\pi), \qquad \mod 2. 
\end{equation}
A verbatim repetition of the arguments in the proof of Proposition~\ref{P:sfcrossings tau} shows that if $A_0$ and $A_1$ lie in the same connected component of  $\calA_\tau(S^1,W,H)$ and have isolated crossings, then  $\spf_\tau(A_0)= \spf_\tau(A_1)$. As in Section~\ref{SS:tauspflow}, for a general $A\in\calA_\tau(S^1,W,H)$  we define the half-spectral flow $\spf_\tau(A)$ as the half-spectral flow of any $A_1$ which has isolated crossing and lies in the same connected component of  $\calA_\tau(S^1,W,H)$ as $A$.

\subsection{The operator $D_A$ on an interval and on a circle}\label{SS:DAperiodic}
Let $\calH$ and $\calW$ be as in Section~\ref{SS:index} and let $\calHp$ and $\calWp$ be similar spaces with $\RR$ replaced by $[-\pi,\pi]$ (thus $\calHp:= L^2([-\pi,\pi],H)$, etc.). Let 
\[
    \calW_{S^1}\ : = \ \big\{\xi\in \calWp:\, \xi(-\pi)=\xi(\pi)\,\big\}. 
\]
We view $\calW_{S^1}$  as a space of $H$-valued functions on  $S^1$. We also view $\calH_{[-\pi,\pi]}$ as the space of square-integrable $H$-valued functions on $S^1$ and use the notations $\calH_{[-\pi,\pi]}$ and $\calH_{S^1}$ for this space interchangeably.  

Suppose $A\in \calA_\tau(S^1,W,H)$ and consider the operator 
\begin{equation}\label{E:DAcircle}
  	D_{A,S^1}:\xi(t)\ \mapsto \ \frac{d}{dt}\xi(t)\ + \ A(t)\xi(t), \qquad \xi\in \calW_{S^1}.
 \end{equation} 
It is a bounded odd symmetric operator $\calW_{S^1}\to \calH_{S^1}$. If we view $\calW_{S^1}$ as a subspace of $\calW_{[-\pi,\pi]}$ then $D_A$ becomes an operator on the interval $[-\pi,\pi]$ with periodic boundary conditions. It is well known that this operator is Fredholm. When $H$ is a space of square-integrable sections of a bundle over a compact manifold this follows from the standard description of elliptic boundary conditions for PDE, cf., for example, \cite[\S7.3]{BarBallmann12} (in \cite{BarBallmann12} the periodic boundary conditions are called ``transmission conditions". cf. \cite[Example~7.28]{BarBallmann12}). The abstract description of elliptic boundary conditions is very similar, but simpler, cf. \cite[Appendix~A]{BandaraGoffengRozen25}).

It follows from Theorem~\ref{T:homotopyindex} that the map $A\to \ind_\tau(D_A)$ is constant on the connected components of $\calA_\tau(S^1,W,H)$.

The main result of this section is the following analog of Theorem~\ref{T:RobbinSalamon2}

\begin{theorem}\label{T:RScircle}
Suppose $A\in \calA_\tau(S^1,W,H)$. The $\tau$-index of the operator $D_{A,S^1}:\calW_{S^1}\to \calH_{S^1}$ is equal to the half-spectral flow $\spf_\tau(A)$.
\end{theorem}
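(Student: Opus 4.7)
The plan is to deduce Theorem~\ref{T:RScircle} from the real-line version Theorem~\ref{T:tauRS} by extending a loop to a path on $\RR$. The paper has already established, just before the theorem, that $\spf_\tau$ is constant on connected components of $\calA_\tau(S^1,W,H)$; combined with the homotopy invariance of $\ind_\tau D_{A,S^1}$ from Theorem~\ref{T:homotopyindex}, this lets me perturb $A$ within $\calA_\tau(S^1,W,H)$ without affecting either side. Since by \eqref{E:A(pi) odd symmetric} the operator $B:=A(\pi)=A(-\pi)$ is odd-symmetric, its kernel is even-dimensional by Lemma~\ref{L:even}, and a small $\tau$-equivariant perturbation of $A$ near $t=\pm\pi$ can render $B$ bijective. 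Hence I may assume $B$ is bijective.

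Next, I extend $A$ to $\hat A\in\calA_\tau(\RR,W,H)$ by $\hat A(t)=B$ for $|t|\ge\pi$. A direct comparison of the definitions \eqref{E:tauspflow} and \eqref{E:tauspflowcircle} gives $\spf_\tau A=\spf_\tau\hat A$, since $\hat A$ has no crossings outside $(-\pi,\pi)$ and the boundary contribution $\tfrac12\rank P(A,-\pi)$ vanishes because $B$ is bijective. Theorem~\ref{T:tauRS} then yields $\ind_\tau D_{\hat A,\RR}=\spf_\tau\hat A=\spf_\tau A$. It therefore remains to establish the identity $\ind_\tau D_{A,S^1}=\ind_\tau D_{\hat A,\RR}$.

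Evaluating a kernel element at $t=-\pi$ and writing $\Phi$ for the propagator of $\dot\xi+A\xi=0$ from $-\pi$ to $\pi$, one identifies $\ker D_{A,S^1}\cong\ker(\Phi-I)$, while $\ker D_{\hat A,\RR}\cong\{v\in H^-(B):\Phi v\in H^+(B)\}$, where $H^\pm(B)$ denote the positive and negative spectral subspaces of $B$. A short computation using the $\tau$-invariance of $A$ yields the relation $\alpha\Phi\alpha^{-1}=\Phi^*$, so that both $\Phi-I$ and the block operator $P^-\Phi|_{H^-(B)}\colon H^-(B)\to H^-(B)$ are odd-symmetric bounded operators. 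To compare their mod-$2$ kernel dimensions, I would construct a $\tau$-equivariant homotopy through Fredholm boundary-value problems on $[-\pi,\pi]$ that interpolates between the periodic boundary condition and the APS-type condition coming from the spectral decomposition of $B$, and apply Theorem~\ref{T:homotopyindex} along this homotopy. An alternative is to prove a circle version of the uniqueness Theorem~\ref{T:uniqtau} and verify that $A\mapsto\ind_\tau D_{A,S^1}$ satisfies its axioms.

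The main obstacle is this final index comparison. In the $\ZZ$-valued theory, both indices equal $\spf A$ by two independent applications of the APS index theorem, but in the $\ZZ_2$-valued setting $\ind_\tau$ is merely a kernel parity and no analogous index formula is available. The comparison must therefore proceed either through a careful homotopy of elliptic boundary conditions, using Theorem~\ref{T:homotopyindex}, or through an axiomatic uniqueness statement on loops analogous to Theorem~\ref{T:uniqtau}.
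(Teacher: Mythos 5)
Your proposal follows essentially the same route as the paper: reduce to the case where $A(\pi)=A(-\pi)$ is invertible by a small perturbation within the same connected component, extend by a constant to a path on $\RR$, apply Theorem~\ref{T:tauRS}, and then compare the circle index with the real-line index. The one step you leave as a plan --- interpolating through a family of elliptic boundary conditions between the periodic and APS conditions and invoking the stability of the $\tau$-index (Theorem~\ref{T:homotopyindex}) --- is exactly how the paper closes the argument (Proposition~\ref{P:indAPS=indperiodic}, combined with the identification $\dim\ker D_{A,\mathrm{APS}}=\dim\ker D_{\tilde{A}}$ from van den Dungen--Ronge), so your outline is sound and matches the paper's proof.
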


The rest of this section is occupied with the proof of this theorem.

\subsection{The case when $A(-\pi)=A(\pi)$ is invertible}\label{SS:circle_invertible_case}
Suppose, first, that $A(-\pi)= A(\pi)$ is invertible. Then the operator \eqref{E:tildeAextension} is in $\calA_\tau(\RR,W,H)$ and $\spf_\tau(\tilde{A})= \spf_\tau(A)$. Hence, from Theorem~\ref{T:RobbinSalamon2} we conclude that 
\begin{equation}\label{E:indDtildeA=sfA}
    \ind_\tau D_{\tilde{A}}\ = \ \spf_\tau A.
\end{equation}

We now recall the abstract Atiyah-Patodi-Singer (APS) boundary conditions, cf. \cite[\S4]{DungenRonge21}. Let $\calH_{<0}\subset H$ (resp. $\calH_{>0}\subset H$)  denote the span of eigenvectors of $A(-\pi)=A(\pi)$ with negative (resp. positive) eigenvalues. Set
\begin{equation}
  \begin{aligned}
          \calW_{APS}\ &:= \ \big\{\,f\in \calW_{[-\pi,\pi]}:\, f(-\pi)\in \calH_{<0},
    \ f(\pi)\in \calH_{>0}\,\big\},\\
    \calW_{APS}^\dagger\ &:= \ \big\{\,f\in \calW_{[-\pi,\pi]}:\, f(-\pi)\in \calH_{>0},
    \ f(\pi)\in \calH_{<0}\,\big\}.
  \end{aligned}
\end{equation}
Then $\tau: \calW_{APS}\to \calW_{APS}^\dagger$.

The operator $D_{A,APS}:\calW_{APS}\to \calH$ is called the operator $D_A$ with the {\em APS boundary conditions}. The adjoint of this operator is the operator
\[
    D_{A,APS}^*= -\frac{d}{dt}+A(t):\calW_{APS}^\dagger\to \calH.
\]
It follows that $\tau D_{A,APS}\ \tau^{-1}= D_{A,APS}^*$, i.e. {\em the operator $D_{A,APS}$ is odd symmetric}.

Recall that the operator $\tilde{A}$ is defined in \eqref{E:tildeAextension}.  By  Proposition~4.5 of \cite{DungenRonge21}, the dimension of the kernel of $D_{A,APS}$ is equal to the dimension of the kernel of the operator $D_{\tilde{A}}$.  Hence, the $\tau$-indices of these operators are equal
\begin{equation*}\label{E:DAPS=tildeD}
    \ind_\tau D_{A,APS}\ = \ \ind_\tau D_{\tilde{A}}.
\end{equation*}
Using \eqref{E:indDtildeA=sfA} we obtain
\begin{equation}\label{E:indDAPS=spfA}
        \ind_\tau D_{A,APS}\ = \ \spf_\tau A.
\end{equation}
Theorem~\ref{T:RScircle} for the case when $A(\pi)$ is invertible follows now from the following 

\begin{proposition}\label{P:indAPS=indperiodic}
If $A(\pi)= A(-\pi)$ is invertible, then $\ind_\tau D_{A,APS}= \ind_\tau D_{A,S^1}$.
\end{proposition}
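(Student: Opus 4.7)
My plan is a homotopy argument inside $\calA_\tau(S^1,W,H)$ that reduces both $\tau$-indices to a constant-family case where each can be computed by hand.

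Concretely, I would consider the linear deformation
\[
    A_s(t) \ := \ (1-s)\,A(t)\ +\ s\,A(\pi), \qquad s\in[0,1],
\]
joining $A_0=A$ to the constant path $A_1\equiv A(\pi)$. The first step is to check that $A_s\in \calA_\tau(S^1,W,H)$ with $A_s(\pi)=A(\pi)$ invertible for every $s$: the loop condition $A_s(-\pi)=A_s(\pi)$ is immediate from \eqref{E:A(pi) odd symmetric}, and $\tau$-invariance is preserved because $\tau A(t)\tau^{-1}=A(t)$ and $\alpha A(\pi)\alpha^{-1}=A(\pi)$ (the latter by \eqref{E:A(pi) odd symmetric} and self-adjointness). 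Continuity of $s\mapsto A_s$ in the $\calA$-norm is immediate.

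The second step is to invoke homotopy invariance. Since $\calH_{<0}$ and $\calH_{>0}$ depend only on $A(\pi)$, which is frozen along the deformation, the APS domain $\calW_{APS}$ is independent of $s$, and the two families $s\mapsto D_{A_s,S^1}$ and $s\mapsto D_{A_s,APS}$ differ from $D_{A,S^1}$ and $D_{A,APS}$ only by multiplication by the uniformly bounded potential $s(A(\pi)-A(t))$. They are therefore continuous families of odd symmetric Fredholm operators in the respective operator norms, and Theorem~\ref{T:homotopyindex} implies that both $\ind_\tau D_{A_s,S^1}$ and $\ind_\tau D_{A_s,APS}$ are independent of $s\in[0,1]$.

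For the third step, I compute both indices at $s=1$. A kernel element of either problem has the form $\xi(t)=e^{-(t+\pi)A(\pi)}\xi(-\pi)$. In the periodic case, $\xi(\pi)=\xi(-\pi)$ reduces to $(e^{-2\pi A(\pi)}-I)\xi(-\pi)=0$, which forces $\xi(-\pi)=0$ since $A(\pi)$ is self-adjoint with spectrum in $\RR\setminus\{0\}$. In the APS case, expanding $\xi(-\pi)\in\calH_{<0}$ in eigenvectors of $A(\pi)$ with negative eigenvalues, the propagation $\xi(\pi)=e^{-2\pi A(\pi)}\xi(-\pi)$ remains a combination of the same negative-eigenvalue eigenvectors, and the requirement $\xi(\pi)\in\calH_{>0}$ forces every coefficient to vanish. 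Both indices are therefore $0$ at $s=1$, hence $0$ throughout, and in particular $\ind_\tau D_{A,APS}= \ind_\tau D_{A,S^1}$.

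The main obstacle I expect is verifying continuity of the APS family in the operator-norm topology required by Theorem~\ref{T:homotopyindex}: APS boundary conditions are a priori sensitive to the spectral data of the boundary operator, but the fact that $A(\pi)$ itself is frozen along the homotopy means the domain $\calW_{APS}$ does not vary with $s$, and continuity reduces to the trivial linear-in-$s$ estimate $\|D_{A_s,APS}-D_{A_{s'},APS}\|\le|s-s'|\,\sup_t\|A(\pi)-A(t)\|_{W\to H}$.
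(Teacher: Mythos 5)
Your homotopy contracts the wrong object, and the argument proves too much. If the linear contraction $A_s(t)=(1-s)A(t)+sA(\pi)$ really stayed inside $\calA_\tau(S^1,W,H)$ with both associated families Fredholm, then your computation at $s=1$ would show $\ind_\tau D_{A,APS}=\ind_\tau D_{A,S^1}=0$ for \emph{every} loop with invertible endpoint; combined with \eqref{E:indDAPS=spfA} (which is established before this proposition) this would force $\spf_\tau(A)=0$ for every such loop, contradicting the nontriviality of the half-spectral flow (cf.\ Lemma~\ref{L:bulk=GrafPorta} and Theorem~\ref{T:bulkedge}), and the same reasoning would contradict the classical formula \eqref{E:APSindex}. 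The step that fails is the unexamined claim that $A_s(t)\in\calS(W,H)$: a convex combination of two unbounded self-adjoint operators with common domain $W$ is symmetric but in general is neither self-adjoint with domain $W$ nor has compact resolvent — for $A=-i\,d/dx$ and $B=i\,d/dx$ on $W=H^1(S^1)\subset L^2(S^1)$ the midpoint is the zero operator on $W$, which is not closed and whose closure has domain $H\neq W$. This is not a repairable technicality: since $\pi_1$ of the space of self-adjoint Fredholm operators is $\ZZ$ and is detected by the spectral flow, a loop with nonzero spectral flow admits no contraction to a constant loop within $\calS(W,H)$, so no path from $A$ to a constant family can make your second step work.

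The paper's proof instead keeps the path $A$ — hence the differential expression $\frac{d}{dt}+A(t)$ — fixed, and deforms the \emph{boundary condition} through a continuous family of elliptic boundary conditions $B_s$ interpolating between the periodic and the APS conditions (following B\"ar--Ballmann); after conjugating to a unitarily equivalent family with fixed domain $\calW_{S^1}$, Theorem~\ref{T:homotopyindex} applies. Your kernel computations at the constant family are correct as far as they go, but they only establish the proposition in the trivial case where $A$ is already constant.
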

\begin{proof}
In the proof of Theorem 8.17 of  \cite{BarBallmann12} (see also the proof of Theorem~5.11 in \cite{BrShi21odd}) the similar statement is proven for the usual index in the case when $H$ is the space of square-integrable sections over a manifold so that $D_A$ is a PDE. The proof goes by contracting a continuous family of elliptic boundary conditions $B_s$ which interpolates between the periodic and the APS boundary conditions: $B_0$ is the APS boundary conditions and $B_1$ is the periodic boundary conditions. This construction also works in our case, where the proof of the fact that the intermediate boundary conditions $B_s$ are Fredholm is exactly the same as in the PDE case. The details can be found in cf. \cite[Appendix~A]{BandaraGoffengRozen25}). Then one obtains a family of operators  $D_{A,B_s}$ with different domains. One then shows that there is a unitary equivalent continuous family of operators $D_{A,s}$ of operators with domain $\calW_{S^1}$. By construction, all the operators  $D_{A,B_s}$ are odd symmetric. Thus, the proposition follows from the stability of the $\tau$-index (Theorem~\ref{T:homotopyindex}).
\end{proof}

\subsection{The case when $A(\pi)$ is not invertible}\label{SS:circle_general_case}
Suppose  $A(-\pi)= A(\pi)$ is not invertible.  Since $A(\pi)$ has a discrete spectrum,  for small enough $\epsilon>0$,  the operator $A(\pi)+\epsilon$ is invertible. The path $A+\epsilon\in \calA_\tau(S^1,W,H)$ is in the same connected component of $\calA_\tau(S^1,W,H)$ as $A$. Hence 
\begin{equation}\label{E:A=A+epsilon}
    \spf_\tau(A+\epsilon)\ =\ \spf_\tau(A), \qquad
    \ind_\tau(D_A)\ =\  \ind_\tau(D_{A+\epsilon}).
\end{equation}
It is shown in Section~\ref{SS:circle_invertible_case} that $\spf_\tau(A+\epsilon)= \ind(D_{A+\epsilon})$. Theorem~\ref{T:RScircle} follows now from \eqref{E:A=A+epsilon}. \hfill$\square$

\section{The half-spectral flow of a family of Toeplitz operators}\label{S:sfToeplitz}

This section studies a family of odd symmetric Toeplitz operators on a complete Riemannian manifold.  Our main result is that the graded half-spectral flow of such a family is equal to the  $\tau$-index of a certain Callias-type operator. This is a $\ZZ_2$-valued analog of the main result of \cite{Br19Toeplitz}.  In the next section, we apply it to obtain a new proof and a generalization of the bulk-edge correspondence in the Graf-Porta model of topological insulators with time-reversal symmetry \cite{GrafPorta13}.

\subsection{A (generalized) Dirac operator}\label{SS:Dirac}
Let $M$ be a complete Riemannian manifold. Recall from \cite[\S{}II.5]{LawMic89} that a graded bundle $E=E^+\oplus E^-$ over $M$ is called a Dirac bundle if it is a Hermitian vector bundle endowed with the Clifford action 
\[	
	c:T^*M\to \End(E), \qquad 
	\big(\,
	c(v):E^\pm\to E^\mp, \quad c(v)^2=-|v|^2, \quad c(v)^*=-c(v)
	\,\big),
\]
and a Hermitian connection $\n^E=\n^{E^+}\oplus\n^{E^-}$ compatible with the Clifford action. We assume that the Hermitian metric is $\theta^E$-invariant

We extend the Clifford action to the product $E\otimes\CC^k$ and denote by  $D$ the associated (generalized) Dirac operator. In local coordinates, it can be written as $D=\sum_jc(dx^j)\n^E_{\p_j}$. We view $D$ as an unbounded  self-adjoint operator $D:L^2(M,E\otimes\CC^k)\to L^2(M,E\otimes\CC^k)$.

From this point on we make the following

\begin{assumption}\label{A:1}
Zero is an isolated point of the spectrum of $D$. 
\end{assumption}

Let $\HH=\HH^+\oplus \HH^-\subset L^2(M,E\otimes\CC^k)$ denote the kernel of $D$ and let $P:L^2(M,E\otimes\CC^k)\to \HH$ be the orthogonal projection. Here $\HH^\pm$ is the intersection of $\HH$ with $L^2(M,E^\pm\otimes\CC^k)$. We denote by $P^\pm:L^2(M,E^\pm\otimes\CC^k)\to \HH^\pm$ the restriction of $P$ to $L^2(M,E^\pm\otimes\CC^k)$.

\subsection{Matrix valued functions}\label{SS:matrix valued}
Let $BC(M,k)$ denote the Banach algebra of continuous bounded functions $f(x)$ on $M$ with values in the space  $\Herm(k)$ of Hermitian complex-valued $k\times{}k$-matrices.

Let $C_{0}(M,k)$ denote the closure of the subalgebra of functions with compact support in $BC(M,k)$. 

Let $C_g^\infty(M,k)\subset BC(M,k)$ denote the space of smooth functions with values in $\Herm(k)$  which are bounded and such that $df$ vanishes at infinity.  Then $C_0(M,k)\subset C_g(M,k)$. 

For  $f\in C^\infty_g(M;k)$ we denote by $M_f:L^2(M,E\otimes\CC^k)\to L^2(M,E\otimes\CC^k)$ the multiplication by $1\otimes{f}$ and by $M_f^\pm$ the restriction of $M_f$ to $L^2(M,E^\pm\otimes\CC^k)$.

\subsection{The Toeplitz operator}\label{SS:Toeplitz}
Recall that $\HH=\HH^+\oplus\HH^-$ denotes the kernel of $D$ and $P:L^2(M,E\otimes\CC^k)\to \HH$ is the orthogonal projection.

\begin{definition}\label{D:Toeplitz}
The operator 
\begin{equation}\label{E:Toeplitz}
	T_f\ := \ P M_f P:\, \HH\ \to \HH
\end{equation}
is called the {\em Toeplitz operator defined by $f$}.  We denote by $T_f^\pm$ the restriction of $T_f$ to $\HH^\pm$.  This is a self-adjoint odd symmetric operator which is even with respect to the grading  $\HH=\HH^+\oplus\HH^-$.
\end{definition}

\begin{definition}\label{D:invertible}
We say that a matrix-valued function $f\in C^\infty_g(M;k)$ is {\em invertible at infinity} if there exists a compact set $K\subset M$ and $C_1>0$  such that $f(x)$ is an invertible matrix for all $x\not\in K$ and 
\begin{equation}\label{E:f-1<C}
    \big\|f(x)^{-1}\big\|<C_1 \qquad \text{for all} \quad x\not\in K.
\end{equation}
\end{definition}

The following result is proven in \cite[Lemma~2.6]{Bunke00}
\begin{proposition}\label{P:Fredholm}
If $D$ satisfies Assumption~\ref{A:1} and  $f\in C^\infty_g(M;k)$  is invertible at infinity then the Toeplitz operator $T_f$ is Fredholm. 
\end{proposition}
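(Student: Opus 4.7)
The plan is to prove Fredholmness of $T_f$ on $\HH$ by constructing an explicit parametrix $T_g$, where $g\in C_g^\infty(M;k)$ is a smooth extension of $f^{-1}$ across the compact set $K$ from Definition~\ref{D:invertible}. Using the uniform bound \eqref{E:f-1<C}, one picks a slightly larger compact set $K'\supset K$ and a bounded smooth $\Herm(k)$-valued $g$ with $g(x)=f(x)^{-1}$ for $x\notin K'$. The identity $dg=-g\,(df)\,g$ outside $K$, together with the vanishing of $df$ at infinity and the uniform bound on $g$, shows $g\in C_g^\infty(M;k)$, while $h_1:=fg-I$ and $h_2:=gf-I$ are compactly supported in $K'$.

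Next I reduce Fredholmness to compactness of two explicit error terms. A direct calculation, viewing both sides as operators on $\HH$, gives
\[
   T_fT_g - I \ = \ PM_{h_1}P \ - \ PM_f(I-P)M_gP,
\]
and a symmetric identity for $T_gT_f-I$. So it suffices to prove that each of $PM_{h_1}P$ and $PM_f(I-P)M_gP$ is compact. By Assumption~\ref{A:1} there is a smooth compactly supported $\chi:\RR\to\RR$ with $\chi\equiv 1$ near $0$ and $\chi\equiv 0$ on $\mathrm{spec}(D)\setminus\{0\}$, so that $P=\chi(D)$. Elliptic regularity for the Dirac operator $D$ makes $\chi(D)$ locally smoothing (sending $L^2$ to $H^1_{\mathrm{loc}}$ on every relatively compact open set); since $h_1$ has compact support, Rellich's compact embedding $H^1(K')\hookrightarrow L^2(K')$ immediately gives compactness of $PM_{h_1}P$.

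For the remaining term I use the identity $(I-P)M_gP=-[P,M_g]P$ to rewrite
\[
   PM_f(I-P)M_gP \ = \ -PM_f\,[P,M_g]\,P,
\]
reducing the task to compactness of $[P,M_g]=[\chi(D),M_g]$. I would apply the Helffer--Sj\"ostrand formula to an almost-analytic extension $\tilde\chi$ of $\chi$:
\[
   [\chi(D),M_g]\ =\ \frac{1}{2\pi i}\int_{\CC}\bar\partial\tilde\chi(z)\,(z-D)^{-1}\,c(dg)\,(z-D)^{-1}\,dz\wedge d\bar z,
\]
where $[D,M_g]=c(dg)$ vanishes at infinity by the definition of $C_g^\infty$. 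Each factor $(z-D)^{-1}\,c(dg)\,(z-D)^{-1}$ is compact because $c(dg)$ vanishes at infinity and $(z-D)^{-1}$ is locally smoothing, and the integral converges in operator norm via standard bounds on $\bar\partial\tilde\chi$.

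The main obstacle is precisely this last compactness step: on a non-compact complete manifold, neither multiplication by a function vanishing at infinity nor the resolvent $(z-D)^{-1}$ is compact by itself, so one must combine the decay of $dg$ with local elliptic regularity for $D$ to conclude compactness of the sandwich $(z-D)^{-1}\,c(dg)\,(z-D)^{-1}$. Once that is in place, the identities above yield $T_fT_g-I$ and $T_gT_f-I$ both compact, and hence $T_f$ is Fredholm.
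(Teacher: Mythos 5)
Your argument is correct. Note, however, that the paper does not actually prove Proposition~\ref{P:Fredholm}: it simply cites \cite{Bunke00}*{Lemma~2.6}, and your proposal is essentially a self-contained reconstruction of that standard argument --- a parametrix $T_g$ with $g$ an extension of $f^{-1}$, reduction of the error terms to $PM_{h_1}P$ with $h_1$ compactly supported (compact by elliptic regularity plus Rellich) and to the commutator $[P,M_g]$, whose compactness is obtained by writing $P$ through the functional calculus and exploiting that $[D,M_g]=c(dg)$ vanishes at infinity. All the identities you use check out, and the key compactness of $c(dg)(z-D)^{-1}$ follows exactly as you indicate, by approximating $c(dg)$ in sup norm by compactly supported multipliers and applying the interior elliptic estimate together with Rellich on each compact piece. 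One small simplification worth noting: since Assumption~\ref{A:1} says that $0$ is an \emph{isolated} point of the spectrum, you do not need the Helffer--Sj\"ostrand calculus at all; the Riesz formula $P=\frac{1}{2\pi i}\oint_{|z|=\epsilon}(z-D)^{-1}\,dz$ over a small circle separating $0$ from the rest of the spectrum gives $[P,M_g]=\frac{1}{2\pi i}\oint_{|z|=\epsilon}(z-D)^{-1}c(dg)(z-D)^{-1}\,dz$ directly, a norm-convergent integral of compact operators, with no almost-analytic extension or decay estimates on $\bar\partial\tilde\chi$ required.
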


\subsection{A quaternionic bundle structure}\label{SS:quaternionix_for_Toeplitz}
Assume now that $\theta^M:M\to M$ is an involution. Let $\theta^E:E\to \theta^*E$ be an anti-linear \textit{involution} (not anti-involution!), which preserves the Hermitian metric on $E$. We assume that $\theta^E$ has an odd grading degree, i.e. $\theta^E:E^\pm\to E^\mp$. Let $\theta^{\CC^k}:\CC^k\to \CC^k$ be an anti-unitary anti-involution. By Lemma~\ref{L:even}, it implies that $k$ is even. The tensor product 
\begin{equation}\label{E:thetaECk}
     \theta^{E\otimes\CC^k}\ := \ \theta^E\otimes\theta^{\CC^k}
\end{equation}
is an anti-unitary anti-involution of odd grading degree. As in \eqref{E:tau=} it induces an anti-unitary anti-involution $\tau:\Gamma(M,E\otimes\CC^k)\to \Gamma(M,E\otimes\CC^k)$. 

\begin{assumption}\label{A:diracisodd}
The generalized Dirac operator $D$ on $E\otimes\CC^k$ is graded odd symmetric, $\tau D\tau^{-1}= D$.
\end{assumption}

\subsection{An odd symmetric family of matrix-valued functions}\label{SS:family_of_functions}
Let $S^1= \{e^{it}:t\in [-\pi,\pi]\}$ be the unit circle. Consider the involution $\theta^{S^1}:t\mapsto -t$ on $S^1$. Then $(S^1,\theta^{S^1})$ is an involutive manifold. 

Set $\M= S^1\times M$. We write points of $\M$ as $(t,x)$, $t\in S^1, \ x\in M$, and view a  smooth matrix-valued function $\f:\M\to \Herm(k)$ as a smooth family $f_t:M\to \Herm(k)$, $t\in S^1$.  Thus we usually write this function as $f_t(x)$. We are interested in the functions with $f_t\in C^\infty_g(M;k)$.

The involutions $\theta^M$ and $\theta^{S^1}$ define an involution $\theta^\M:= \theta^{S^1}\times\theta^M$ on $\M$.  As usual, we define the anti-unitary anti-involution $\tau$ on the spaces of matrix-valued functions on $\M$ by 
\begin{equation}\label{E:theta inv matrices}
    (\tau f)_t(x)\ = \ \theta^{\CC^k}\,f_{\theta^{S^1}t}(\theta^M x)\, (\theta^{\CC^k})^{-1}.
\end{equation}

\begin{definition}\label{D:oddsymmetric_function}
A smooth family  $\f:S^1\to C^\infty_g(M;k)$, $t\mapsto f_t$, of matrix-valued functions is called {\em odd symmetric} if  $\tau f=f$.  

We denote by $\F_\tau(M,k)$ the set of smooth families $\f=\{f_t\}$ of odd symmetric matrix-valued functions, such that $f_t$ is  invertible at infinity of $M$ for all $t\in S^1$ and  there exists a constant $C_2>0$ such that 
\begin{equation}\label{E:ddtf<C}
    \big\|\frac{\p}{\p t}f_t(x)\big\|< C_2, \qquad
    \text{for all}\quad \in S^1,\ x\in M. 
\end{equation}
\end{definition}
Suppose $\f= \{f_t\}\in \F_\tau(M;k)$. It follows from \eqref{E:f-1<C} and \eqref{E:ddtf<C} that there exists a compact set $K\subset M$ and  a large enough constant $\alpha>0$ such that   
\begin{equation}\label{E:ddtf<alpha}
	\frac{\p}{\p t}f_t(x)\ < \ 
	\frac{\alpha}2\,f_t(x)^2,
	\qquad\text{for all} \ \ x\not\in K.
\end{equation}
Here the inequality $A<B$ between two self-adjoint matrices means that for any vector $v\not=0$, we have $\<Av,v\><\<Bv,v\>$.

\subsection{The graded half-spectral flow of a family of self-adjoint Toeplitz operators}\label{SS:family}
 If $\f\in \F_\tau(M;k)$ then $T_{\f}$ ($t\in S^1$) is a periodic family of odd symmetric self-adjoint Fredholm operators (the Fredholmness follows from Proposition~\ref{P:Fredholm}). Recall that we denote by $T_{\f}^\pm$ the restriction of $T_\f$ to $\HH^\pm$.
 Our goal is to compute the graded half-spectral flow 
 \[
    \spf_\tau T_\f^+\ - \ \spf_\tau T_\f^-
 \]
 of this family.

\subsection{A Callias-type operator on $S^1\times M$}\label{SS:CalliasSxM} Recall that $\M= S^1\times M$ is an involutive manifold with $\theta^{\MM}=\theta^{S^1}\times \theta^M$. Denote by $\pi_1:S^1\times M\to S^1$ and $\pi_2:S^1\times M\to M$ the natural projections. By a slight abuse of notation, we denote the pull-backs $\pi^*_1dt,\,\pi_2^*dx\in T^*\M$ by  $dt$ and $dx$ respectively. Set  
\[
	\E := \ \pi_2^*E.
\]
Then $\E$ is naturally an ungraded quaternionic Dirac bundle with Clifford action $\c:T^*\M\to \End(\E)$ such that $\c(dx)= c(dx)$ and $\c(dt)$ is given with respect to the decomposition $\E=\pi_2^*E^+\oplus\pi_2^*E^-$ by the matrix
\[
	\c(dt)\ = \ \begin{pmatrix}
	i\cdot\ID&0\\0&-i\cdot\ID
	\end{pmatrix}.
\] 
Let $\D$ be the corresponding Dirac operator. With respect to the decomposition 
\begin{equation}\label{E:L2=L2timesL2}
		L^2(\M,\E\otimes\CC^k)\ = \ 
		L^2(S^1)\otimes L^2(M,E\otimes\CC^k).
\end{equation}
it takes the form 
\begin{equation}\label{E:tilD}
	\D\ = \ \c(dt)\,\frac{\p}{\p t}\otimes1\ + \ 1\otimes D.
\end{equation}
It follows that $\D$ is odd symmetric. We remark that, as opposed to $D$, the operator $\D$ is not graded.  

Let now $\f=\{f_t\}\in \F_\tau(M;k)$.  We view it as a smooth function on $\M$. Let $\MM_\f:L^2(\M,\E\otimes\CC^k)\to L^2(\M,\E\otimes\CC^k)$ denote the multiplication by $\f$.  It is an odd symmetric operator. The commutator 
\[
	[\D,\MM_\f]\ := \ \D\circ\MM_\f-\MM_\f\circ\D
\] 
is a zero-order differential operator, i.e.,  a bundle map $\E\otimes\CC^k\to \E\otimes\CC^k$.


From \eqref{E:ddtf<alpha} and our assumption that $df_t$ vanishes at infinity, we conclude that there exist  constants $c,d>0$ and  a  compact set $\mathcal{K}\subset \M$, called {\em an essential support} of $\B_{c\f}$, such that 
\begin{equation}\label{E:Calliascond}
	\big[\D,c\,\MM_\f\big](t,x) \ <  \ c^2{\MM_\f(t,x)}^2-d,
	\qquad \text{for all}\ \ (t,x)\not\in \mathcal{K}.
\end{equation}
It follows that 
\begin{equation}\label{E:Bft}
	\B_{c\f}\ := \ \D\ + \ i\,c\,\MM_\f
\end{equation}
is a {\em Callias-type operator} in the sense of \cites{Anghel93Callias,Bunke95} (see also \cite[\S2.5]{BrCecchini17}). In particular, it is Fredholm.  It is also odd symmetric. The $\tau$-index of odd symmetric Callias-type operators was studied in  \cite[\S5]{BrSaeedi24index}.

The main result of this section is the following

\begin{theorem}\label{T:sfToeplitz=Callias}
Suppose $\f\in \F_\tau(M;k)$ be a smooth periodic family of invertible at infinity matrix-valued functions. Suppose that Assumptions~\ref{A:1} and \ref{A:diracisodd} are satisfied. Then 
\begin{equation}\label{E:sfToeplitz=Callias}
	\spf_\tau(T_{\f}^+)\ - \  \spf_\tau(T_{\f}^-)\ = \ \ind_\tau{\B_{c\f}}.
\end{equation}
\end{theorem}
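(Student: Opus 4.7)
The plan is to combine the periodic $\ZZ_2$-valued Robbin-Salamon theorem (Theorem~\ref{T:RScircle}) with a Callias-type localization argument that reduces $\B_{c\f}$ to a ``suspension" operator living on the kernel of $D$. This is the $\ZZ_2$-valued analog of the strategy used in \cite{Br19Toeplitz} for the integer-valued case, with the framework of odd symmetric operators from \cite{BrSaeedi24index} replacing the ordinary theory of self-adjoint Fredholm operators.

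First, I would apply Theorem~\ref{T:RScircle} to each of the two periodic families $\{T_\f^\pm\}_{t\in S^1}$ of odd symmetric Fredholm Toeplitz operators to obtain
\[
    \spf_\tau T_\f^\pm \ = \ \ind_\tau\bigl(\tfrac{d}{dt} + T_\f^\pm\bigr)
\]
on the periodic Sobolev space inside $L^2(S^1,\HH^\pm)$. I would then assemble the two blocks into a single operator on $L^2(S^1,\HH)$ using the Clifford action $\c(dt) = \mathrm{diag}(i,-i)$ on $\E = \pi_2^*E^+ \oplus \pi_2^*E^-$: the operator $\c(dt)\p_t + icT_\f$ decomposes as $i(\p_t + cT_\f^+) \oplus (-i)(\p_t - cT_\f^-)$. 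Since the $\tau$-index of a suspension is invariant under positive scaling $T\mapsto cT$ (the spectrum just rescales) and under the sign flip $T\mapsto -T$ (the kernel projections $P(A,t)$ are unchanged), one obtains
\[
    \ind_\tau\bigl(\c(dt)\p_t + icT_\f\bigr)\ \equiv\ \spf_\tau T_\f^+ + \spf_\tau T_\f^-\ \equiv\ \spf_\tau T_\f^+ - \spf_\tau T_\f^- \pmod{2}.
\]

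The remaining task is to identify this $\tau$-index with $\ind_\tau \B_{c\f}$ for $c$ sufficiently large. Using the orthogonal decomposition $L^2(\M, \E\otimes\CC^k) = L^2(S^1,\HH) \oplus L^2(S^1,\HH^\perp)$ together with $\D = \c(dt)\p_t + D$ and $D|_\HH = 0$, the Callias operator $\B_{c\f} = \D + ic\MM_\f$ acquires a $2\times 2$ block form whose top-left block is exactly $\c(dt)\p_t + icT_\f$. Thanks to the spectral gap of $D$ on $\HH^\perp$ (Assumption~\ref{A:1}), the Callias estimate \eqref{E:Calliascond}, and the derivative bound \eqref{E:ddtf<alpha}, the bottom-right block becomes uniformly invertible once $c$ is large. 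I would then construct a norm-continuous homotopy through odd symmetric Callias-type operators that multiplies the off-diagonal entries by a parameter $s\in[0,1]$, shrinking them to zero while preserving the invertibility of the bottom-right block. Homotopy invariance of the $\tau$-index (Theorem~\ref{T:homotopyindex}) then gives $\ind_\tau \B_{c\f} = \ind_\tau\bigl(\c(dt)\p_t + icT_\f\bigr)$, which combined with the first step yields \eqref{E:sfToeplitz=Callias}.

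The main obstacle is the rigorous justification of this localization and homotopy. Preservation of $\tau$-symmetry along the homotopy is automatic because $P$, $T_\f$, $\MM_\f$, and the grading $E=E^+\oplus E^-$ are all compatible with $\tau$; the real difficulty is that the Callias condition \eqref{E:Calliascond} involves the full off-diagonal structure of $\B_{c\f}$, and one must check that shrinking the off-diagonal blocks does not destroy the coercive lower bound needed for Fredholmness at any stage of the homotopy. Once a uniform quantitative lower bound on the bottom-right block is obtained by combining the spectral gap of $D$ with \eqref{E:Calliascond}, the rest of the argument proceeds much as in the integer-valued Toeplitz-Callias reduction of \cite{Br19Toeplitz}.
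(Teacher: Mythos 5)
Your overall strategy coincides with the paper's: Section~\ref{S:prsfToeplitz=Callias} also first applies Theorem~\ref{T:RScircle} to identify $\spf_\tau(T_\f^\pm)$ with the $\tau$-index of the suspension on $L^2(S^1,\HH^\pm)$ (Lemma~\ref{L:sf=ind}, together with the observation that sign flips and the positive rescaling by $c$ are irrelevant mod $2$), and then reduces $\ind_\tau\B_{c\f}$ to the $\tau$-index of the compression $P\circ\B_{c\f}\circ P$ on $L^2(S^1,\HH)$, whose block form is exactly your $i(\p_t+cT_\f^+)\oplus(-i)(\p_t-cT_\f^-)$ (Lemma~\ref{L:indddt=indBf}). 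Your first step is correct and identical to the paper's.

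The gap is in the localization step, and it sits precisely where you flag it. You assert that the bottom-right block $Q\circ\B_{c\f}\circ Q$ (with $Q=1-P$) ``becomes uniformly invertible once $c$ is large'' by combining the spectral gap of $D$ with \eqref{E:Calliascond} and \eqref{E:ddtf<alpha}. This does not work as stated: on $\IM Q$ the gap gives $\|\D\xi\|\ge\delta\|\xi\|$, but $ic\,Q\MM_\f Q$ is a bounded perturbation of norm of order $c\sup\|\f\|$, which for large $c$ dwarfs $\delta$; and since $\f$ is invertible only outside a compact set, the term $c^2\MM_\f^2$ gives no global lower bound, while the commutator term in $\|\B_{c\f}\xi\|^2$ contributes a negative term of order $c$. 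So neither smallness of the perturbation nor the Callias inequality yields invertibility of the $Q$-block directly. The paper circumvents this in two ways: (a) it imports from the proof of Lemma~5.3 of \cite{Br19Toeplitz} that the off-diagonal terms $P\B_{c\f,u}Q+Q\B_{c\f,u}P$ are compact for the entire family $\B_{c\f,u}=1\otimes D+u\bigl(\c(dt)\p_t+ic\MM_\f\bigr)$, so the index splits into the two diagonal blocks with no homotopy of the off-diagonal entries needed; and (b) it computes $\ind_\tau Q\B_{c\f,u}Q=0$ by deforming $u\to 0$, where the block becomes the genuinely invertible operator $QDQ$, rather than by trying to invert $Q\B_{c\f}Q$ itself. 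If you insist on your homotopy of the off-diagonal entries, you would still need the compactness input from \cite{Br19Toeplitz} (an estimate ultimately resting on $df_t$ vanishing at infinity, in the spirit of \cite{Bunke00}) to guarantee Fredholmness at intermediate $s$ — and once that compactness is in hand, the homotopy is superfluous.
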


\begin{remark}\label{R:gradedsf+-}
Since both sides of \eqref{E:sfToeplitz=Callias} live in $\ZZ_2$, we can replace the minus sign on the left-hand side with the plus sign. We prefer to write the minus sum to stress the analogy with the $\ZZ$-valued case \cite[Theorem~2.1]{Br19Toeplitz} and to emphasize that this expression gives the {\em graded half-spectral flow} of the family $T_{\f}$.
\end{remark}
\begin{remark}\label{R:nonvanishing}
Note that, as opposed to $df$, the differential $d\f$ does not vanish at infinity. Because of this $\B_{c\f}$ does not satisfy the conditions of Corollary~2.7 of \cite{Bunke00} and its index does not vanish in general. 
\end{remark}

\begin{remark}\label{R:vanishing of sfT-}
In many interesting applications, eg. in the generalized Graf-Porta model,  $H^-=\{0\}$. Hence, $\spf(T_{\f}^-)=0$ and \eqref{E:sfToeplitz=Callias} computes $\spf(T_{\f}^+)$.
\end{remark}

Before giving the proof of Theorem~\ref{T:sfToeplitz=Callias} in Section~\ref{S:prsfToeplitz=Callias}, we present some special cases and applications of this theorem.

\section{The even dimensional case: the generalized bulk-edge correspondence}\label{S:evendim}

Suppose that the dimension of $M$ is even. Then the dimension of $\M$ is odd and by the $\ZZ_2$-valued version of the Callias-type index theorem \cite[\S5]{BrSaeedi24index} the $\tau$-index of $\B_{c\f}$ is equal to the index of a certain Dirac operator on a compact hypersurface $\N\subset \M$. It follows from Theorem~\ref{T:sfToeplitz=Callias} that the half-spectral flow of a family of Toeplitz operators on $M$ is equal to the $\tau$-index of a Dirac-type operator on $\N$. We refer to this equality as a generalized bulk-edge correspondence, since, as we explain in the next section, in the special case when $M$ is a unit disk in $\CC$ this reduces to the bulk-edge correspondence for the Graf-Porta model of topological insulators with time-reversal symmetry \cites{GrafPorta13,Hayashi17}.

\subsection{The $\ZZ_2$-valued Callias-type theorem}\label{SS:Callias_theorem}

We recall the $\ZZ_2$-valued version of the Callias-type theorem from \cite{BrSaeedi24index}.

Let $N\subset M$ be a $\theta^M$-invariant hypersurface such that there is an essential support $\mathcal{K}\subset \M$ of $\B_{c\f}$ whose boundary $\p{}\mathcal{K}= \N:=S^1\times{}N$. In particular, the restriction of $\f$ to $\N$ is invertible and satisfies \eqref{E:Calliascond}. Then there are quaternionic vector bundles $\F_{\N\pm}$ over $\N$ such that 
\[
	\M\times\CC^k \ = \ \F_{\N+}\oplus \F_{\N-},
\]
and the restriction of $\f$ to $\F_{\N+}$ (respectively, $\F_{N-}$) is positive definite (respectively, negative definite).  Since $\f$ is a $\tau$-invariant function, the restriction of $\theta^{\CC^k}$ to $\F_{\N\pm}$ defines a quaternionic structure on these vector bundles.

Let $E_N$ denote the restriction of the bundle $E$ to $N$. Then  $\E_\N:=\pi_2^*E_N$ is the restriction of $\E$ to $\N$. It is naturally a  Dirac bundle over $\N$. The restriction of $\theta^\E\otimes\theta^{\CC^k}$ to $\E_\N\otimes\F_{N+}$ defines a structure of a quaternionic bundle on $\E_\N\otimes\F_{N+}$.  The induced Dirac-type operator $\D_\N$ on $\E_\N\otimes \F_{\N+}$ is odd symmetric.

Let $v$ denote the unit normal vector to $\N$ pointing towards $\mathcal{K}$. Then  $i\c(v):\E_\N\to \E_\N$ is an involution. We denote by $\E_\N^{\pm1}$ the eigenspace of $i\c(v)$ with eigenvalue $\pm1$. This defines a grading $\E_\N= \E_\N^{+1}\oplus\E_\N^{-1}$  on the Dirac bundle $\E_N$. 
The operator $\D_\N$ is odd with respect to the induced grading on $\E_\N\otimes \F_{\N+}$.  (This grading is different from the one induced by the grading on $E$. Note that the operator $\D_\N$  is not an odd operator with respect to the grading induced by the grading on $E$).  We denote by $\D_\N^\pm$ the restriction of $\D_\N$ to $\E_\N^{\pm1}\otimes \F_{\N+}$.

Theorem~5.3 of \cite{BrSaeedi24index} states that 
\begin{equation}\label{E:indB=indD}
		\ind_\tau \B_{c\f}\ = \ \ind_\tau \D_{\N}^+.
\end{equation}

\subsection{Computation of the graded spectral flow in even dimensional case}\label{SS:sf=index_even}
We now get the following corollary of Theorem~\ref{T:sfToeplitz=Callias}.
\begin{corollary}\label{C:evencase}
Under the conditions of Theorem~\ref{T:sfToeplitz=Callias} assume that $\dim{}M$ is even. Let $j:N\hookrightarrow M$ be a hypersurface such that there is an essential support $\mathcal{K}\subset \M$ of $\B_{c\f}$ whose boundary $\p{}\mathcal{K}= \N:=S^1\times{}N$. Then 
\begin{equation}\label{E:sfToeplitz=cohom}
	\spf_\tau(T_{\f}^+)\ - \  \spf_\tau(T_{\f}^-)\ = \ \ind_\tau \D_{\N}^+.
\end{equation}

\end{corollary}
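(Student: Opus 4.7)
The plan is to obtain the corollary as an immediate concatenation of the main theorem of this section with the $\ZZ_2$-valued Callias-type index theorem from our previous work. No new analysis is required; the proof reduces to verifying that the hypotheses of the two cited results hold in the present setting and then chaining the two equalities.

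First, I would invoke Theorem~\ref{T:sfToeplitz=Callias} directly. The family $\f \in \F_\tau(M;k)$, together with Assumptions~\ref{A:1} and \ref{A:diracisodd}, is exactly the hypothesis of that theorem, so we obtain
\[
    \spf_\tau(T_{\f}^+) \ - \ \spf_\tau(T_{\f}^-) \ = \ \ind_\tau \B_{c\f}.
\]
Second, I would apply equation~\eqref{E:indB=indD}, which is Theorem~5.3 of \cite{BrSaeedi24index}, to the odd symmetric Callias-type operator $\B_{c\f}$ on the odd-dimensional involutive manifold $\M = S^1 \times M$. The hypothesis of that theorem is precisely that there exists an essential support $\mathcal{K} \subset \M$ of $\B_{c\f}$ whose boundary is $\N = S^1 \times N$ for some $\theta^M$-invariant hypersurface $N \subset M$; this is assumed in the statement of the corollary. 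Thus $\ind_\tau \B_{c\f} = \ind_\tau \D_\N^+$, and composing with the previous identity yields \eqref{E:sfToeplitz=cohom}.

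There is no genuine obstacle here; the only points to check before citing \eqref{E:indB=indD} are dimensional and structural. Since $\dim M$ is even, $\dim \M$ is odd, which is the dimensional regime of the Callias index theorem in \cite{BrSaeedi24index}. Moreover the construction of the boundary data in Section~\ref{SS:Callias_theorem}—the splitting $\M \times \CC^k = \F_{\N+} \oplus \F_{\N-}$ according to the sign of $\f$, the quaternionic structures on $\F_{\N\pm}$ induced by $\theta^{\CC^k}$, and the grading of $\E_\N$ via $i\c(v)$ making $\D_\N^+$ odd symmetric—uses only the invertibility of $\f$ on $\N$ together with the quaternionic compatibility \eqref{E:thetaECk}, both of which follow from $\f \in \F_\tau(M;k)$. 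Hence the hypotheses of \eqref{E:indB=indD} are satisfied, and the corollary follows. \hfill$\square$
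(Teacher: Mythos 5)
Your proposal is correct and is exactly the argument the paper intends: the corollary is obtained by chaining Theorem~\ref{T:sfToeplitz=Callias} with the $\ZZ_2$-valued Callias-type index theorem \eqref{E:indB=indD} from \cite{BrSaeedi24index}, after noting that $\dim\M$ is odd and that the boundary data on $\N$ carry the required quaternionic structure. Your verification of the hypotheses matches the setup of Section~\ref{SS:Callias_theorem}.
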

\subsection{The pseudo-convex domain}\label{SS:pseudo-convex}
\newcommand{\op}{\bar{\p}}

Let $M$ be a bounded strongly pseudo-convex domain in $\CC^n$ with smooth boundary $N:=\p{}\oM$. Let $g^M$ be the Bergman metric on $M$, cf. 
\cite[\S7]{Stein72book}.  We define $E= \Lambda^{n,*}(T^*M)$  and set $E^+= \Lambda^{n,\even}(T^*M)$, $E^-= \Lambda^{n,\odd}(T^*M)$. Then $E$ is naturally a Dirac bundle over $M$ whose space of smooth section coincides with the Dolbeault 
complex $\Omega^{n,\b}(M)$ of $M$ with coefficients in the canonical bundle $K=\Lambda^{n,0}(T^*M)$. Moreover, the corresponding Dirac operator is given by 
\[
	D\ = \ \op \ + \ \op^*, 
\]
where $\op$ is the Dolbeault differential and $\op^*$ its adjoint with respect to the $L^2$-metric induced by the Bergman metric on $M$. Then $(M,g^M)$ is a complete K\"ahler manifold. 

By \cite[\S5]{DonnellyFefferman83} zero is an isolated point of the spectrum of $D$ and $\HH:=\ker D$ is a subset of the space of $(n,0)$-forms
\begin{equation}\label{E:kernelBergman}
	\HH\ := \ \ker D\ \subset \ \Omega^{n,0}(M).
\end{equation}
In particular, Assumption~\ref{A:1} is satisfied and $\HH^-=\{0\}$. 

Let $\theta^M:M\to M$ be an anti-holomorphic metric preserving involution. We define an anti-unitary involution (not anti-involution!) on $E$ by 
\[
    \theta^E:\, \omega\ \mapsto \ \overline{(\theta^M)^*\omega}, \qquad \omega\in \Lambda^{n,*}(T^*M),
\]
where the bar denotes the complex conjugation and $(\theta^M)^*$ is the pull-back. Note that, since $\theta^M$ is anti-holomorphic,  $(\theta^M)^*$ sends $(n,*)$-forms to $(*,n)$-forms and the complex conjugation sends them back to $(n,*)$-forms.  

Let $\theta^{\CC^k}:\CC^k\to \CC^k$ be an anti-unitary anti-involution and let $\theta^{E\otimes\CC^k}$ be given by \eqref{E:thetaECk}. Then $E\otimes\CC^k$ is a quaternionic bundle and $D\otimes1$ is an odd symmetric operator on it. 
In view of \eqref{E:kernelBergman},  Corollary~\ref{C:evencase} implies the following

\begin{corollary}\label{C:genralized_bulk-edge}
Let $\bar{\f}(t,x)$ be a smooth $\tau$-invariant $\Herm(k)$-valued function on $\N:= S^1\times N$ such that $\bar{\f}(t,x)$ is invertible for all $(t\in S^1$, $x\in N$). Let $\f:\M\to \Herm(k)$ be a smooth $\tau$-invariant extension of $\bar{\f}$ to $\M$ (i.e. $\f|_\N=\bar{\f}$). Then $\f\in \F_\tau(M,k)$ and  
\begin{equation}
    \spf_\tau T_\f\ = \ \ind_\tau\D_\N^+.
\end{equation}
\end{corollary}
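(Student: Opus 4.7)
The plan is to apply Corollary~\ref{C:evencase} to $\f$ and then simplify both sides using the two defining features of the pseudo-convex setting: that $\HH^-=\{0\}$ and that $\p\bar{M}$ sits at infinity in the Bergman metric $g^M$.

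First I would verify that $\f\in\F_\tau(M,k)$. Smoothness and $\tau$-invariance are hypotheses, and because $\f$ extends continuously to the compact set $S^1\times\bar{M}$ the derivative $\p_t\f$ is uniformly bounded, giving \eqref{E:ddtf<C}. The Euclidean differential $d_xf_t$ is also bounded on $S^1\times\bar{M}$, and since the Bergman metric $g^M$ blows up at $\p\bar{M}$ one has $|d_xf_t|_{g^M}\to 0$ at infinity of $M$, so $f_t\in C_g^\infty(M,k)$. For invertibility at infinity: $\bar{\f}$ is invertible on the compact $\N$, so $\|\bar{\f}^{-1}\|$ is uniformly bounded; by continuity $\f$ is invertible with bounded inverse on an open neighborhood $U$ of $\N$ in $S^1\times\bar{M}$, and $\M\setminus U$ is relatively compact in the Bergman metric.

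Next I would produce an essential support with boundary close to $\N$. Fix a smooth defining function $\rho$ for $\bar{M}$ (negative on $M$, vanishing to first order on $\p\bar{M}$) and, for small $\epsilon>0$, set $N_\epsilon:=\{\rho=-\epsilon\}$, $M_\epsilon:=\{\rho\le -\epsilon\}$, and $\mathcal{K}_\epsilon:=S^1\times M_\epsilon$. For $\epsilon$ small, $\MM_\f^2$ is uniformly bounded below on $\M\setminus\mathcal{K}_\epsilon$, while $[\D,\MM_\f]$ is controlled by $|d\f|_{g^M}+|\p_t\f|$, which is uniformly bounded. Hence for $c$ large enough the Callias condition \eqref{E:Calliascond} holds outside $\mathcal{K}_\epsilon$, so $\mathcal{K}_\epsilon$ is an essential support of $\B_{c\f}$ with $\p\mathcal{K}_\epsilon=\N_\epsilon:=S^1\times N_\epsilon$. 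Corollary~\ref{C:evencase} then yields $\spf_\tau T_\f^+ - \spf_\tau T_\f^- = \ind_\tau\D_{\N_\epsilon}^+$. Pulling back along the flow of $\nabla\rho$ gives a continuous family of odd symmetric Fredholm operators on $\N=S^1\times\p\bar{M}$ interpolating between the pullback of $\D_{\N_\epsilon}^+$ and $\D_\N^+$, so Theorem~\ref{T:homotopyindex} gives $\ind_\tau\D_{\N_\epsilon}^+=\ind_\tau\D_\N^+$.

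Finally, by the Donnelly--Fefferman result recalled in Section~\ref{SS:pseudo-convex}, $\HH=\HH^+$, and hence $T_\f^-=0$, so $\spf_\tau T_\f^-=0$. Combining this with the previous step yields $\spf_\tau T_\f=\spf_\tau T_\f^+=\ind_\tau\D_\N^+$. The main obstacle in this plan is the quantitative Callias estimate used above: one must verify the comparison $[\D,c\MM_\f]<c^2\MM_\f^2-d$ uniformly in $t\in S^1$ and in $x$ near $\p\bar{M}$, which depends on the precise asymptotic behavior of $g^M$ near $\p\bar{M}$; modulo this estimate, everything else is a direct application of the previously established theorems.
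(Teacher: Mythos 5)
Your proposal is correct and follows the same route as the paper, which simply deduces the corollary from Corollary~\ref{C:evencase} together with the Donnelly--Fefferman fact that $\HH^-=\{0\}$ (so $\spf_\tau T_\f^-=0$). The additional details you supply --- checking $\f\in\F_\tau(M,k)$ using the blow-up of the Bergman metric at $\p\oM$, building the essential support $S^1\times M_\epsilon$, and homotoping $\D_{\N_\epsilon}$ to $\D_\N$ --- are exactly the steps the paper leaves implicit, and they are carried out correctly.
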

We refer to this result as the \textbf{generalized bulk-edge correspondence} for the reason explained in the next section.

\section{The Graf-Porta model of topological insulators with time-reversal symmetry}\label{S:Graf-Porta}

In this section, we briefly review a tight-binding model for two-dimensional topological insulators with fermionic time-reversal symmetry, following the description in \cite{Hayashi17} (see also \cite{GrafPorta13}) and show that the bulk-edge correspondence for this model follows immediately from our Corollary~\ref{C:genralized_bulk-edge}.

\subsection{The bulk Hamiltonian}\label{SS:bulkGrafPorta}
We consider the following  {\em tight binding model}: The  ``bulk" state space is the space $l^2(\ZZ\times\ZZ,\CC^k)$ of square-integrable sequences 
$\phi\ = \ \big\{\phi_{mn}\big\}_{(m,n)\in\ZZ\times\ZZ}$,  where $\phi_{mn}\in \CC^k$.

The {\em bulk Hamiltonian} $H:l^2(\ZZ\times\ZZ,\CC^k)\to l^2(\ZZ\times\ZZ,\CC^k)$ is defined as follows: consider a sequence of matrices $A_{p,q}:\CC^k\to \CC^k$ such that $\sum_{pq}\|A_{p,q}\|<\infty$ and define $H$ by 
\[
    (H\phi)_{mn}\ = \ \sum_{p,q}\, A_{p,q}\,\phi_{m-p,n-q}.
\]
We assume that $H$ is self-adjoint, which is equivalent to the equality 
\begin{equation}\label{E:Apq=Aa-p-q}
        A_{-p,-q}\ = \ A_{p,q}^*.
\end{equation}

The  Fourier transform  of $H$ is a family of self-adjoint $k\times k$-matrices 
\[
    H(t,s) \ := \ \sum_{pq} A_{p,q}\,e^{ipt}\,e^{iqs} \qquad 
    (t,s)\in [-\pi,\pi]\times [-\pi,\pi]\ \simeq \
    S^1\times S^1.
\]
depending smoothly on $s$ and $t$.

{\em We assume that the bulk Hamiltonian has a spectral gap at Fermi level $\mu\in \RR$}, i.e. there exists $\epsilon>0$ such that the spectrum of $H(s,t)$ does not intersect the interval $(\mu-\epsilon,\mu+\epsilon)$ for all $(s,t)\in S^1\times S^1$. In particular, the operator $H(s,t)-\mu$ is invertible  for all $(s,t)\in S^1\times S^1$. Thus the trivial bundle $(S^1\times{}S^1)\times\CC^k$ over the torus $S^1\times{}S^1$ decomposes into the direct sum of subbundles 
\[
	(S^1\times{}S^1)\times\CC^k\ = \ \F_+\oplus \F_-
\]
such that the restriction of $H(s,t)-\mu$ to $\F_+$  is positive definite and the restriction of  $H(s,t)-\mu$ to $\F_-$ is negative define. The bundle $\F_+$ is called the {\em Bloch bundle}.

\subsection{The time-reversal symmetry}\label{SS:time-reversal}
Define the involution $\theta^{S^1\times S^1}: S^1\times S^1\to S^1\times S^1$ by $\theta^{S^1\times S^1}(t,s)= (-t,-s)$ ($(s,t)\in [-\pi,\pi]\times [-\pi,\pi]$). Let 
\[
    \theta^{\CC^k}:\, \CC^k\ \to \ \CC^k
\]
be an anti-unitary anti-involution (hence, $k$ is even). 

The anti-unitary anti-involution  
\[
    \tau:\, \phi(t,s)\ \to \ \theta^{\CC^k}\phi(-t,-s) 
\]
on the space $L^2(S^1\times S^1,\CC^k)$ of square-integrable $\CC^k$-valued functions on $S^1\times S^1$  is called a \textbf{time-reversal symmetry}. We assume that $H(s,t)$ is $\tau$-invariant. Equivalently, this means that 
\begin{equation}\label{E:oddsymmetricApq}
        \theta^{\CC^k}\,A_{p,q}\,(\theta^{\CC^k})^{-1}\ = \ A_{-p,-q}^*\ = \ A_{p,q},
\end{equation}
where in the last equality we used \eqref{E:Apq=Aa-p-q}.

\begin{remark}
In \cite{GrafPorta13} the authors consider the operators $H_q(t):= \sum_p A_{p,q}e^{ipt}$. Then the time-reversal symmetry condition \eqref{E:oddsymmetricApq} becomes 
\[
    \theta^{\CC^k}\,H_q(t)\,(\theta^{\CC^k})^{-1}\ = \ H_q(-t),
\]
which agrees with Definition~2.4 of \cite{GrafPorta13}.
\end{remark}
Notice that  $\tau$ preserves the subbundles $\F_\pm$ of the trivial bundle over $S^1\times S^1$. Hence, $\F_\pm$ a quaternionic bundles over $S^1\times S^1$ (in particular, their ranks are even).

\subsection{The $\ZZ_2$-valued bulk index}\label{SS:bulk_index}
Let $M=\{z\in \CC:\,|z|<1\}$ be the unit disc in $\CC$. It is a pseudo-convex domain with boundary $\p M= S^1$. As usual, we set $\M:= S^1\times M$ so that $\N:= \p\M= S^1\times S^1$. The function 
\begin{equation}\label{E:f(t,z)}
      \f(t,z)\ := \ \sum_{pq} A_{p,q}\,e^{ipt}\,z^q, \qquad 
    t\in [-\pi,\pi], \ z\in M,  
\end{equation}
is an extension of $f(t,s):=H(t,s)$ from $\p\M$ to $\M$ as in Corollary~\ref{C:genralized_bulk-edge}.  Let $T_\f$ and $\D_\N$ be as in Section~\ref{SS:pseudo-convex}. Then $\D_\N$ is an odd symmetric Dirac-type operator acting on the sections of the bundle $\F_+$ over $\p\M= S^1\times S^1$. 

\begin{definition}\label{D:bulkindex}
The {\em $\ZZ_2$-valued bulk index} of the Hamiltonian $H$ is 
\begin{equation}\label{E:bulkindex}
	\Ib\ := \ \ind_\tau \D_\N^+.
\end{equation}
\end{definition} 
Graf and Porta, \cite[\S4]{GrafPorta13}, give a different, more combinatorial, definition of the bulk index. 

\begin{lemma}\label{L:bulk=GrafPorta}
Our definition of the bulk index is equivalent to the Definition~4.8 of \cite{GrafPorta13}. 
\end{lemma}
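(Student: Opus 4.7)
The plan is to identify both $\Ib = \ind_\tau \D_\N^+$ and the Graf--Porta invariant from \cite[Def.~4.8]{GrafPorta13} as topological invariants of the quaternionic Bloch bundle $\F_+$ over the involutive torus $\N = S^1\times S^1$, $(t,s)\mapsto(-t,-s)$, and then to match them. First I would unpack $\D_\N^+$ in the special case $M=\{|z|\le 1\}$: here $n=1$, $E=\Lambda^{1,*}(T^*M)$, and $D=\bar\partial+\bar\partial^*$ is the Dolbeault--Dirac operator with Bergman metric. Along the boundary circle $N=S^1$, the eigenspace decomposition of $i\c(v)$ together with the pullback along $\pi_2:\N\to N$ realizes $\E_\N^{+1}$ as a trivial line bundle on $\N$, and the induced boundary Dirac operator $\D_\N^+$ becomes (up to a zero-order term) a twisted Cauchy--Riemann operator $\bar\partial_{\F_+}$ on the torus with its flat K\"ahler structure. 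In particular $\D_\N^+$ is (a perturbation of) the standard CR operator on the quaternionic vector bundle $\F_+$ over the involutive torus.

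Second, I would observe that $\ind_\tau \D_\N^+$ is a homotopy invariant of the quaternionic bundle $\F_+$. Indeed, any isomorphism of quaternionic bundles $\F_+ \simeq \F_+'$ can be extended through a continuous path of quaternionic structures, producing a norm-continuous path in $\widehat\calB_\tau(\calW,\calH)$, so Theorem~\ref{T:homotopyindex} applies. Moreover $\ind_\tau$ is additive under direct sums of quaternionic bundles and vanishes on bundles that admit a global $\tau$-invariant trivialization, because in that case $\D_\N^+$ is unitarily equivalent to a direct sum of copies of the scalar $\bar\partial$ operator on $T^2$, whose kernel is the $\tau$-invariant space of constants and whose cokernel is similarly $\tau$-invariant (so the dimensions are even, i.e.\ $0$ mod $2$).

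Third, I would recall from \cite[\S4]{GrafPorta13} that their $\ZZ_2$ bulk index is constructed from a combinatorial/Pfaffian formula applied to the transition data of $\F_+$ and is likewise a homotopy invariant, additive under direct sums, and vanishes on trivial quaternionic bundles. Since the set of isomorphism classes of quaternionic vector bundles over the involutive torus $(T^2,\theta)$ of a given rank forms (after stabilization) a group isomorphic to $\ZZ_2$, cf.\ \cite{DeNittisGomi15,Hayashi17}, both $\Ib$ and the Graf--Porta invariant factor through this $\ZZ_2$ and it suffices to match them on a single generator.

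The main obstacle will be the last step: carrying out the explicit comparison on a nontrivial generator of the quaternionic $K$-group of $(T^2,\theta)$. The cleanest route is to take the model quaternionic line bundle of rank $2$ whose Pfaffian invariant is known to equal $1$ (e.g.\ the bundle used to normalize the Fu--Kane--Mele invariant in \cite{GrafPorta13}), and compute $\dim\ker \D_\N^+ \bmod 2$ directly via separation of variables on $T^2$; a one-parameter family argument together with Kramers degeneracy at the fixed points $(0,0),(0,\pi),(\pi,0),(\pi,\pi)$ forces an odd number of Kramers pairs in $\ker\D_\N^+$, giving $\ind_\tau \D_\N^+=1$. This matches Graf--Porta's value on the same generator and completes the proof.
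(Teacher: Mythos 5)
Your overall strategy is the same as the paper's: both arguments reduce the comparison to the stable classification of quaternionic bundles over the involutive torus, $KSp^0(S^1\times S^1)\cong\ZZ\oplus\ZZ_2$, kill the free summand by showing both invariants vanish on product bundles, and then match the two resulting maps $\ZZ_2\to\ZZ_2$ by checking nontriviality on a generator. However, there are two genuine problems with your execution.

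First, your justification for the vanishing on trivially quaternionic bundles is the wrong mechanism. You argue that the kernel and cokernel of $\D_\N^+$ are ``$\tau$-invariant,'' hence even-dimensional by Kramers. But for a \emph{graded} odd symmetric operator one has $\tau\D_\N^+\tau^{-1}=(\D_\N^+)^*=\D_\N^-$, so $\tau$ maps $\ker\D_\N^+$ to $\ker\D_\N^-$ and does \emph{not} preserve $\ker\D_\N^+$. If your reasoning were valid it would prove $\ind_\tau\D_\N^+=0$ for every bundle, trivializing the whole invariant. The correct reason the index vanishes on a product bundle $\N\times\CC^k$ is that the kernel is (kernel of the scalar operator)$\otimes\CC^k$ with $k$ even by Kramers applied to $\theta^{\CC^k}$ on the fiber --- which is the argument the paper uses.

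Second, and more seriously, the normalization step is not proved. The claim that ``a one-parameter family argument together with Kramers degeneracy at the fixed points $(0,0),(0,\pi),(\pi,0),(\pi,\pi)$ forces an odd number of Kramers pairs in $\ker\D_\N^+$'' is not an argument: Kramers degeneracy only tells you that $\tau$-invariant spaces are even-dimensional, i.e.\ that eigenvalues come in pairs; it says nothing about the \emph{parity of the number of pairs}, which is exactly the $\ZZ_2$ quantity you are trying to compute. Moreover $\ker\D_\N^+$ is a global object and is not localized at the fixed points of $\theta^\N$, so no local statement at those four points can determine its dimension. To close the argument you must actually exhibit a nontrivial quaternionic bundle and compute $\dim\ker\D_\N^+\bmod 2$ for it; the paper does this by invoking the explicit computation in Example~3 of \cite{BrSaeedi24index}, and cites \cite[\S7.1]{Hayashi17} for the fact that the Graf--Porta index is likewise the unique nontrivial map on $KSp^0(S^1\times S^1)/\ZZ\cong\ZZ_2$.
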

\begin{proof}
Since the set of Dirac-type operators on the space of differential forms  $\Omega^{n,*}(M,\F_+)$ is connected, the bulk index depends only on the pair $(\F_+,\tau)$. 

More generally, for any quaternionic bundle $\F$ over $\N=S^1\times S^1$, there exists a complementary quaternionic bundle $\F'$ such that $\F\oplus\F'\simeq \N\times \CC^k$ and function $H:\M\to \Herm(k)$ such that $\F=\F_+$ (where $\F_+$ is as in Section~\ref{SS:bulkGrafPorta}). Thus the bulk index can be viewed as a map from the set of quaternionic bundles over $S^1\times S^1$ to $\ZZ^2$. 

The Grothendieck group of quaternionic vector bundles over an involutive manifold $X$ is the symplectic K-theory $KSp^0(X)$, see \cite[\S2]{Hayashi17} and references therein. In particular, 
\[
    KSp^0(S^1\times S^1)\ = \ \ZZ\oplus\ZZ_2,
\]
where the $\ZZ$ summand corresponds to the product bundles $E=(S^1\times S^1)\times \CC^n$, cf. \cite[\S7.1]{Hayashi17}. 

Recall that, by Lemma~\ref{L:even}, the rank of any quaternionic bundle over $S^1\times S^1$ is even. It follows that for a product bundle, the kernel of the Dirac operator on $\Omega^{n,*}(M,\CC^n)= \Omega^{n,*}(M)\otimes\CC^n$ is even-dimensional. Thus the bulk index vanishes on product bundles.  Hence, the bulk index defines a map 
\[
    \Ib:\, KSp^0(S^1\times S^1)/\ZZ\simeq \ZZ_2\ \longrightarrow  \ \ZZ_2.
\]
There is only one non-trivial map like this. Example~3 of \cite{BrSaeedi24index} shows that the bulk index is not always equal to zero and thus defines a unique non-trivial map from $KSp^0(S^1\times S^1)/\ZZ$ to $\ZZ_2$. 

The Graf-Porta construction of the bulk index, \cite[Definition~4.8]{GrafPorta13} also defines a non-trivial map $KSp^0(S^1\times S^1)/\ZZ\to \ZZ_2$ (see also  \cite[\S7.1]{Hayashi17} where the Graf-Porta bulk index is described as a map from $KSp^0(S^1\times S^1)/\ZZ$). Hence, this index must coincide with $\Ib$.
\end{proof}

\subsection{The edge Hamiltonian}\label{SS:edgeGrafPorta}
We now take the boundary into account. The ``edge" state space is the space  $l^2(\ZZ_{\ge0}\times\ZZ,\CC^k)$ of square integrable sequences of vectors in $\CC^k$ on the half-lattice $\ZZ_{\ge0}\times\ZZ$. 

The Fourier transform of the bulk Hamiltonian $H:l^2(\ZZ\times\ZZ,\CC^k)\to l^2(\ZZ\times\ZZ,\CC^k)$ in the direction ``along the edge" 
\begin{equation}\label{E:H(t)}
        H_q(t)\ := \ \sum_{p} A_{p,q}\,e^{ipt}, \qquad 
    t\in [-\pi,\pi], 
\end{equation}
transforms $H$ into a family of self-adjoint translation-invariant operators
\begin{equation*}
	H(t):\,l^2(\ZZ,\CC^k)\ \to \ l^2(\ZZ,\CC^k). \qquad 
    \big(H(t) \phi\big)_n \ := \ \sum_p H_p(t)\,\phi_{n-p}.
\end{equation*}
Then $H(t)$ depends smoothly on $t\in S^1$.  It follows from \eqref{E:Apq=Aa-p-q} that $H(-t)= H(t)^*$ and \eqref{E:oddsymmetricApq} is equivalent to 
\begin{equation}\label{E:H(t)odd}
    \theta^{\CC^k}\, H(t)\, (\theta^{\CC^k})^{-1}\ = \ H(-t).
\end{equation}
(compare to Definition~2.4 of \cite{GrafPorta13}). 

We view $S^1\simeq [-\pi,\pi]$ as and involutive manifold with involution $\theta^{S^1}:t\to -t$. The anti-involution $\theta^{\CC^k}$ defines a structure of a quaternionic bundle on the trivial bundle $S^1\times\CC^k$ over $S^1$. As usual, we define an anti-involution
\[
    \tau:\,L^2\big(S^1,l^2(\ZZ,\CC^k)\big)\ \to \ L^2\big(S^1,l^2(\ZZ,\CC^k)\big), 
    \qquad
    \tau:\,f(t) \ \mapsto \ \theta^{\CC^k}f(-t).
\]
Then $H(t)$ is $\tau$-invariant by \eqref{E:H(t)odd}.

Let $\Pi:l^2(\ZZ,\CC^k)\to l^2(\ZZ_{\ge0},\CC^k)$ denote the projection. Clearly, $\theta^{\CC^k}\, \Pi\, (\theta^{\CC^k})^{-1} =  \Pi$.

\begin{definition}\label{D:edgeHamiltonian}
The {\em edge Hamiltonian} is the family of Toeplitz operators
\begin{equation}\label{E:egdeHamiltonian}
	H^\#(t)\  := \ \Pi\circ H(t)\circ \Pi:\,
	l^2(\ZZ_{\ge0},\CC^k)\ \to \ l^2(\ZZ_{\ge0},\CC^k),
	\qquad t\in S^1.
\end{equation}
\end{definition}
Then 
\begin{equation}\label{E:Hsharp(t)odd}
    \theta^{\CC^k}\, H^\#(t)\, (\theta^{\CC^k})^{-1}\ = \ H^\#(-t).
\end{equation}
\begin{definition}\label{D:edgeindex}
The {\em $\ZZ_2$-valued edge index} $\Ie$ of  the Hamiltonian $H$ is the half-spectral flow of the edge Hamiltonian
\begin{equation}\label{E:edgeindex}
	\Ie\ := \ \spf_\tau\big(H^\#(t)\big). 
\end{equation}
\end{definition}
\begin{theorem}[Bulk-edge correspondence]\label{T:bulkedge}
\(\displaystyle
	\Ib\ = \ \Ie.
\)
\end{theorem}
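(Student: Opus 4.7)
The plan is to apply the generalized bulk-edge correspondence (Corollary~\ref{C:genralized_bulk-edge}) with $M$ the open unit disc (equipped with the Bergman metric) and $\f$ given by \eqref{E:f(t,z)}, and then identify the resulting Toeplitz half-spectral flow with the edge index $\Ie$.

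First I would verify that the function $\f(t,z)=\sum_{p,q}A_{p,q}\,e^{ipt}\,z^q$ (suitably smoothed into the interior for the finitely many terms with $q<0$) defines an element of $\F_\tau(M,k)$. The series converges absolutely on $\overline{\M}$ because $\sum\|A_{p,q}\|<\infty$; its restriction to $\N=S^1\times S^1$ equals the bulk Hamiltonian $H(t,s)$, which is invertible by the Fermi-gap hypothesis of Section~\ref{SS:bulkGrafPorta}. The $\tau$-invariance of $\f$ follows from the computation
\[
    \theta^{\CC^k}\f(-t,\bar z)(\theta^{\CC^k})^{-1}
    \ = \ \sum_{p,q}\overline{e^{-ipt}\bar z^q}\,\bigl[\theta^{\CC^k}A_{p,q}(\theta^{\CC^k})^{-1}\bigr]
    \ = \ \sum_{p,q}e^{ipt}z^q A_{p,q}\ = \ \f(t,z),
\]
where the middle equality uses the anti-linearity of $\theta^{\CC^k}$ together with \eqref{E:oddsymmetricApq}. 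Corollary~\ref{C:genralized_bulk-edge} then yields
\[
    \spf_\tau T_\f \ = \ \ind_\tau\D_\N^+ \ = \ \Ib.
\]

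Second, I would identify $\spf_\tau T_\f$ with $\Ie=\spf_\tau H^\#$. The kernel $\HH$ of $D=\bar{\p}+\bar{\p}^*$ on the Bergman disc consists of $L^2$ holomorphic $(1,0)$-forms and admits an orthonormal basis $\{\gamma_n z^n\,dz\}_{n\ge 0}$ for an explicit sequence $\gamma_n>0$. Tensoring with $\CC^k$ gives a unitary isomorphism $U:\HH\otimes\CC^k\to l^2(\ZZ_{\ge 0},\CC^k)$ intertwining the $\tau$-structure on $\HH\otimes\CC^k$ with the time-reversal symmetry of Section~\ref{SS:time-reversal}. A direct computation in this basis shows that $UT_{\f(t,\cdot)}U^{-1}$ acts on $e_n\otimes v$ as the superposition $e_n\otimes v\mapsto \sum_{p,q\ge 0}A_{p,q}e^{ipt}(\gamma_n/\gamma_{n+q})e_{n+q}\otimes v$, with analogous formulas involving $\gamma_{n-r}/\gamma_n$ for the backward-shift contributions coming from the $q<0$ terms; the edge Hamiltonian $H^\#(t)$ has the same shift structure but with all ratios replaced by $1$. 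Since $\gamma_n/\gamma_{n+q}\to 1$ as $n\to\infty$, the difference $UT_\f U^{-1}-H^\#$ is compact, depends norm-continuously on $t$, and is $\tau$-symmetric. The straight-line homotopy therefore stays in $\calA_\tau(S^1,W,H)$, and by the homotopy axiom of Theorem~\ref{T:uniqtau} we conclude $\spf_\tau T_\f=\spf_\tau H^\#=\Ie$. Combining with the first step gives $\Ib=\Ie$.

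The main obstacle is the compactness analysis in the second step: one must establish the asymptotics of the Bergman normalization constants $\gamma_n/\gamma_{n+q}$ precisely enough to make $UT_\f U^{-1}-H^\#$ compact uniformly in $t\in S^1$, and verify that the linear interpolation stays self-adjoint and Fredholm (the latter being automatic from the stability of Fredholmness under compact perturbations). A cleaner, more abstract alternative would be to homotope $\f$ inside $\F_\tau(M,k)$ to a model function for which $UT_\f U^{-1}$ is literally $H^\#$ — for instance, one whose radial dependence is chosen so that the Bergman projection absorbs the $\gamma_n$-factors — thereby bypassing the explicit asymptotic analysis.
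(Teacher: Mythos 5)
Your proposal is correct and follows essentially the same route as the paper: apply Corollary~\ref{C:genralized_bulk-edge} to the unit disc with $\f$ from \eqref{E:f(t,z)} to get $\spf_\tau T_\f=\Ib$, then transport $T_{f_t}$ to $l^2(\ZZ_{\ge0},\CC^k)$ via the holomorphic-monomial basis and show it differs from $H^\#(t)$ by a norm-continuous family of compact $\tau$-symmetric operators, so the half-spectral flows agree by homotopy invariance. The compactness step you flag as the main remaining obstacle is exactly what the paper obtains by citing Coburn's theorem \cite{Coburn73}, rather than by the explicit Bergman-normalization asymptotics you sketch.
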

\begin{proof}
Consider the unit disc $B:= \{z\in \CC:\, |z|\le 1\}$. This is a strongly pseudoconvex domain in $\CC$. We view the bulk Hamiltonian $H(s,t)$ as a function on $\p B\times{}S^1$ with values in the set $\Herm(k)$ of invertible Hermitian $k\times{}k$-matrices. 

We endow $B$ with the Bergman metric and consider the Dolbeault-Dirac  operator $D=\op+\op^*$ on the space $L^2\Omega^{1,\b}(B,\CC^k)=  L^2\Omega^{1,0}(B,\CC^k)\oplus L^2\Omega^{1,1}(B,\CC^k)$ of square-integrable $(1,\b)$-forms on $B$. Let $\HH:= \ker D$. Then $\HH\subset L^2\Omega^{1,0}(B,\CC^k)$, cf.  \cite[\S5]{DonnellyFefferman83}. Let $P:L^2\Omega^{1,\b}(B,\CC^k)\to \HH$ denote the orthogonal projection. 

As in \eqref{E:f(t,z)} we define a family of functions on the unit disc by 
\[
    f_t(z)\ := \  \ \sum_{pq} A_{p,q}\,e^{ipt}\,z^q, \qquad 
    t\in [-\pi,\pi], \ z\in M, 
\]
and consider the family of Toeplitz operators
\[
    T_{f_t}\ :=\ P\circ M_f\circ P:\,\HH\ \to \ \HH, \qquad t\in [-\pi,\pi],
\]
This family is closely related to the family $H^\sharp(t)$ as we shall now explain. 

The map $\varphi:l^2(\ZZ_{\ge0} \to \HH$ which sends 
$u= \{u_j\}_{j\ge0}\in l^2(\ZZ_{\ge0},\CC^k)$ to the 1-form
\[
	\phi(u)\ := \ \sum_{j\ge0}\,u_j\,z^j\,dz\ \in \ \HH, 
\]
is an isomorphism of vector spaces. By \cite{Coburn73}, the difference 
\[
	T_{f_t}\ - \ \phi\circ H^\#(t)\circ \phi^{-1}:\,\HH\ \to \ \HH, 
 \qquad t\in[-\p,\pi].
\] 
is a continuous family of compact operators. By Proposition~\ref{P:sfcrossings tau},
\[
	\spf_\tau(T_{f_t})\ = \ \spf_\tau\big(\, \phi\circ H^\#(t)\circ \phi^{-1}\,\big)
	\ = \ \spf_\tau\big(H^\#(t)\big).
\]
The theorem follows now from definitions of bulk and edge indexes, \eqref{E:bulkindex}, \eqref{E:edgeindex}, and Corollary~\ref{C:genralized_bulk-edge}.
\end{proof}

\section{The proof of Theorem~\ref{T:sfToeplitz=Callias}}\label{S:prsfToeplitz=Callias}

As in \cite[\S5]{Br19Toeplitz}, the proof of Theorem~\ref{T:sfToeplitz=Callias}  consists of two steps. First (Lemma~\ref{L:sf=ind}) we apply Theorem~\ref{T:RScircle} to conclude that the half-spectral flow $\spf_\tau(T_{f_t})$ is equal to the $\tau$-index of a certain odd symmetric operator on $\M$. Then, using the argument similar to \cite{Bunke00} we show that the latter index is equal to the $\tau$-index of $\B_{c\f}$.

\subsection{The spectral flow as an index}\label{SS:sf=index}
Recall that the space $L^2(\M,\E\otimes\CC^k)$ has an involution $\tau$. 
We view the space $L^2(S^1,\HH)$ of square integrable functions with values in $\HH\subset L^2(M,\E\otimes\CC^k)$ as a subspace of $L^2(\M,\E\otimes\CC^k)$. Then $\tau$ induces an anti-involution on $L^2(S^1,\HH)$ which we still denote by $\tau$. 
The family  $T_{f_t}$ naturally induces an operator $L^2(S^1,\HH)\to L^2(S^1,\HH)$ which we still denote by $T_{f_t}$. Let $T_{f_t}^\pm$ denote the restriction of $T_{f_t}$ to $L^2(S^1,\HH^\pm)$. These operators are odd symmetric. 

Theorem~~\ref{T:RScircle} implies the following

\begin{lemma}\label{L:sf=ind}
Under the assumptions of Theorem~\ref{T:sfToeplitz=Callias} we have
\begin{equation}\label{E:sf=ind}
	\spf_\tau(T_{f_t}^\pm)\ = \  
	\ind_\tau\,\big(\frac{\p}{\p t}+T_{f_t}^\pm\big)\Big|_{L^2(S^1,\HH^\pm)}.
\end{equation}
\end{lemma}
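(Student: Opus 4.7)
The plan is to apply Theorem~\ref{T:RScircle} directly to the periodic family $T_{f_t}^\pm$, viewed as a $\tau$-invariant family of bounded self-adjoint Fredholm operators on $\HH^\pm$.

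First I verify that the hypotheses of Theorem~\ref{T:RScircle} hold. Since $D$ is graded odd symmetric (Assumption~\ref{A:diracisodd}), we have $\tau D\tau^{-1} = D$, hence the orthogonal projection $P$ onto $\HH = \ker D$ satisfies $\tau P \tau^{-1} = P$. The odd-symmetry condition $\tau \f = \f$ from Definition~\ref{D:oddsymmetric_function} translates into $\tau M_{f_t} \tau^{-1} = M_{f_{-t}}$, and hence
\[
\tau\, T_{f_t}\, \tau^{-1} \ =\ P\,(\tau M_{f_t}\tau^{-1})\,P \ =\ T_{f_{-t}}.
\]
Restricting to the graded pieces gives the $\tau$-invariance of the family $T_{f_t}^\pm$ in the periodic sense of Section~\ref{SS:intervalfamily}. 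Proposition~\ref{P:Fredholm} supplies the Fredholmness of each $T_{f_t}^\pm$, and norm continuity in $t$ is inherited from the smooth dependence of $f_t$ on $t$ together with the uniform bound \eqref{E:ddtf<C}.

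Second, I apply Theorem~\ref{T:RScircle} to this family. The suspension operator $\frac{\p}{\p t} + T_{f_t}^\pm$ on $L^2(S^1, \HH^\pm)$ with periodic boundary conditions is bounded, odd symmetric and Fredholm, so its $\tau$-index is defined. Theorem~\ref{T:RScircle} then identifies this $\tau$-index with $\spf_\tau(T_{f_t}^\pm)$, which is exactly \eqref{E:sf=ind}.

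The only delicate point is that Theorem~\ref{T:RScircle} was stated under the compact-embedding hypothesis $W \hookrightarrow H$, while for bounded Toeplitz operators we have $W = H = \HH^\pm$ with the identity embedding. That hypothesis is used only to guarantee discrete spectrum of each $A(t)$, a property which for bounded self-adjoint Fredholm operators already holds in a neighbourhood of zero, and this is the only region of the spectrum relevant for $\spf_\tau$ and for the kernel of the suspension. Inspecting the proofs in Sections~\ref{S:tauabsractflow}--\ref{S:APScircle} shows that the arguments depend only on norm continuity of the family, discreteness of spectrum near zero, and stability of the $\tau$-index under continuous deformation (Theorem~\ref{T:homotopyindex}), all of which are in force. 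I expect this verification to be the main but essentially routine technical step; once it is in place, the lemma follows directly from Theorem~\ref{T:RScircle}.
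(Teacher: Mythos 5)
Your proposal is correct and takes exactly the route the paper does: the paper's entire proof of this lemma is the single remark that Theorem~\ref{T:RScircle} applies to the periodic $\tau$-invariant family $T_{f_t}^\pm$, and your verification of the hypotheses ($\tau$-invariance via $\tau M_{f_t}\tau^{-1}=M_{f_{-t}}$, Fredholmness from Proposition~\ref{P:Fredholm}, norm continuity from \eqref{E:ddtf<C}) is more explicit than what the paper provides. Your observation about the compact-embedding hypothesis is a genuine point that the paper silently glosses over, and your resolution — that only discreteness of the spectrum near zero is needed, which holds for bounded self-adjoint Fredholm operators — is the right way to close that gap.
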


Since $\spf_\tau(T_{f_t}^\pm)=-\spf_\tau(-T_{f_t}^\pm)$, the spectral flows $\spf_\tau(T_{f_t}^\pm)$ and $\spf_\tau(-T_{f_t}^\pm)$ are equal modulo 2. Hence, \eqref{E:sf=ind} is equivalent to 
\begin{equation}\label{E:sf=-ind}
	\spf_\tau(T_{f_t}^\pm)\ = \  
	\ind_\tau\,\big(\frac{\p}{\p t}-T_{f_t}^\pm\big)\Big|_{L^2(S^1,\HH^\pm)}.
\end{equation}

\begin{lemma}\label{L:indddt=indBf}
Under the assumptions of Theorem~\ref{T:sfToeplitz=Callias} we have
\begin{equation}\label{E:indddt=indTf}
	 \ind_\tau \B_{c\f}\ = \ 
	 \ind_\tau\,\big(\frac{\p}{\p t}-T_{f_t}^+\big)\Big|_{L^2(S^1,\HH^+)}
	 \ + \ 
	 \ind_\tau\,\big(\frac{\p}{\p t}+T_{f_t}^-\big)\Big|_{L^2(S^1,\HH^-)}.
\end{equation}
\end{lemma}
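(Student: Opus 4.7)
The plan is to follow the approach developed by Bunke \cite{Bunke00} and Braverman \cite{Br19Toeplitz} for the $\ZZ$-valued analog and reduce the computation of $\ind_\tau\B_{c\f}$ to the projection onto $\ker D$. Split $L^2(\M,\E\otimes\CC^k)=L^2(S^1,\HH)\oplus L^2(S^1,\HH^\perp)$ using the orthogonal projection $P$ onto $\HH=\ker D$, and write $\B_{c\f}$ in $2\times2$ block form with respect to this splitting. Since $P$ commutes with $D$ (so $PDP=0$ and $PDQ=QDP=0$ with $Q:=1-P$) and with $\c(dt)\tfrac{\p}{\p t}$, the diagonal blocks are
\[
    P\B_{c\f}P=\c(dt)\tfrac{\p}{\p t}+icT_\f\ \ \text{on}\ \ L^2(S^1,\HH),\qquad Q\B_{c\f}Q=\c(dt)\tfrac{\p}{\p t}+D|_{\HH^\perp}+icQM_\f Q\ \ \text{on}\ \ L^2(S^1,\HH^\perp),
\]
while the off-diagonal blocks are the bounded operators $ic\,PM_\f Q$ and $ic\,QM_\f P$.

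Two reductions are needed. First, I would establish invertibility of $Q\B_{c\f}Q$ for $c$ sufficiently large: by Assumption~\ref{A:1}, $D|_{\HH^\perp}$ has a uniform spectral gap $\delta>0$, and together with the Callias growth of $\f$ at infinity this should produce a lower bound of the form $\|(Q\B_{c\f}Q)\xi\|\geq \delta'\|\xi\|$ through the standard parametrix argument of \cite[Lemma~2.8]{Bunke00}. Once $Q\B_{c\f}Q$ is invertible, either a Schur-complement computation or, equivalently, the straight-line homotopy $s\,\B_{c\f}+(1-s)\operatorname{diag}(P\B_{c\f}P,Q\B_{c\f}Q)$ (which remains Fredholm for all $s\in[0,1]$ thanks to the invertibility of the $Q$-block) combined with Theorem~\ref{T:homotopyindex} gives
\[
    \ind_\tau\B_{c\f}\ =\ \ind_\tau(P\B_{c\f}P)\ +\ \ind_\tau(Q\B_{c\f}Q)\ =\ \ind_\tau(P\B_{c\f}P).
\]

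Second, I would analyze the Toeplitz block. The Clifford action $\c(dt)=\operatorname{diag}(i,-i)$ with respect to $\HH^+\oplus\HH^-$ and the fact that $T_\f$ preserves the grading decouple $P\B_{c\f}P$ into
\[
    \operatorname{diag}\bigl(i\tfrac{\p}{\p t}+ic\,T_{f_t}^+,\ -i\tfrac{\p}{\p t}+ic\,T_{f_t}^-\bigr).
\]
Left-multiplying each summand by the unimodular scalar $\mp i$ (which fixes the kernel) and using Theorem~\ref{T:homotopyindex} to homotope $c$ down to $1$ through positive values produces $\tfrac{\p}{\p t}+T_{f_t}^+$ on $L^2(S^1,\HH^+)$ and $\tfrac{\p}{\p t}-T_{f_t}^-$ on $L^2(S^1,\HH^-)$. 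Finally, the mod-$2$ identity $\ind_\tau(\tfrac{\p}{\p t}+A)=\ind_\tau(\tfrac{\p}{\p t}-A)$, which follows (as already used in Section~\ref{SS:sf=index}) from the fact that $\spf_\tau(-T)=-\spf_\tau(T)$ collapses to equality in $\ZZ_2$, recasts the answer in the form \eqref{E:indddt=indTf}.

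The main obstacle will be the first reduction: proving the invertibility of $Q\B_{c\f}Q$ on $L^2(S^1,\HH^\perp)$ requires combining the global $D$-gap on $\HH^\perp$ with the Callias growth of $\f$ at infinity while controlling the additional $\tfrac{\p}{\p t}$-derivative coming from the $S^1$-factor. The $\ZZ$-valued analog is handled in \cite[\S5]{Br19Toeplitz}, and the odd-symmetric case is expected to follow by the same mechanism, with $\tau$-symmetry preserved automatically at each step since all the constructions (projections, parametrices, homotopies) commute with $\tau$.
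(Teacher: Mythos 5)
Your overall architecture --- split $L^2(\M,\E\otimes\CC^k)$ by $P$ and $Q=1-P$, kill the $Q$-block, and identify $P\B_{c\f}P$ with $\operatorname{diag}\bigl(i\tfrac{\p}{\p t}+icT^+_{\f},\,-i\tfrac{\p}{\p t}+icT^-_{\f}\bigr)$ --- is exactly the paper's argument, and your treatment of the Toeplitz block (unimodular rescaling, homotoping $c$ to $1$, the mod-$2$ sign flip) is fine. The gap is in your first reduction. You propose to prove that $Q\B_{c\f}Q|_{\IM Q}$ is \emph{invertible} for $c$ large, via a lower bound combining the gap $\delta$ of $D|_{\HH^\perp}$ with the Callias growth of $\f$. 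That estimate does not close: expanding $\|(Q\D Q+icQ\MM_\f Q)\xi\|^2$ produces the cross term $-ic\<Q[\D,\MM_\f]Q\xi,\xi\>$, which is bounded only by $c\sup|d\f|\,\|\xi\|^2$ and is \emph{not} small --- the Callias condition \eqref{E:Calliascond} forces $c$ to be large, so on the compact part of $\M$ (where $\f$ may vanish and $c^2\MM_\f^2$ gives nothing) this term swamps $\delta^2$. So invertibility of the $Q$-block at $u=1$ is not accessible by the parametrix argument you cite, and it is in any case more than you need.

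The paper instead only establishes $\ind_\tau Q\B_{c\f}Q|_{\IM Q}=0$, as follows: set $A:=\c(dt)\tfrac{\p}{\p t}+ic\MM_\f$ and $\B_{c\f,u}:=1\otimes D+uA$; the off-diagonal blocks $P\B_{c\f,u}Q$, $Q\B_{c\f,u}P$ are compact (this is the content of the proof of Lemma~5.3 of \cite{Br19Toeplitz}), which gives the additivity of $\ind_\tau$ over the diagonal blocks; the family $Q\B_{c\f,u}Q|_{\IM Q}$ is Fredholm and odd symmetric for all $u\in[0,1]$ and is invertible at $u=0$ (where it equals $QDQ|_{\IM Q}$), so Theorem~\ref{T:homotopyindex} forces its $\tau$-index to vanish at $u=1$. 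Your alternative ``straight-line homotopy to the diagonal'' also works, but its Fredholmness should be justified by the compactness of the off-diagonal blocks, not by the (unavailable) invertibility of the $Q$-block. If you replace your invertibility claim by this deformation in $u$, your proof coincides with the paper's.
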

\begin{proof}
Recall that we denote by $P$ the orthogonal projection $L^2(M,E\otimes\CC^k)\to \HH$. To simplify the notation we write $P$ also for the operator $1\otimes P$ on 
$L^2(\M,\E\otimes\CC^k)$ and set $Q:=1-P$. Then $L^2(S^1,\HH)$ coincides with the image of the projection 
\[
	P:\, L^2(\M,\E\otimes\CC^k)\ \to\  L^2(\M,\E\otimes\CC^k).
\]

Set 
\[
	A\ := \ \c(dt)\,\frac{\p}{\p t}\ + \ i\,c\,\MM_\f.
\]
Then $\B_{c\f}=1\otimes D+A$. Consider a one-parameter family of operators
\[
	\B_{c\f,u}\ := \ 1\otimes D\ + \ u\,A,
	\qquad 0\le u\le 1.
\]
It is shown in the proof of Lemma~5.3 of \cite{Br19Toeplitz} that the operator
\[
	\B_{c\f,u}\ -\ P\circ \B_{c\f,u}\circ P|_{\IM P}\ - \ 
    Q\circ \B_{c\f,u}\circ Q|_{\IM Q}
\]
is compact. Hence, 
\begin{equation}\label{E:indB=diag}
	\ind_\tau \B_{c\f,u}\ = \ 
	\ind_\tau P\circ \B_{c\f,u}\circ P|_{\IM P}\ + \ 
	\ind_\tau Q\circ \B_{c\f,u}\circ Q|_{\IM Q}.
\end{equation}
The operator $Q\circ \B_{c\f,0}\circ Q|_{\IM Q}= Q\circ D\circ Q|_{\IM Q}$ is invertible. Hence, it is Fredholm and its $\tau$-index is equal to 0. We conclude that  
$Q\circ \B_{c\f,u}\circ Q|_{\IM Q}$ ($0\le u\le 1$) is a continuous family of Fredholm odd symmetric operators with 
\[
	\ind_\tau Q\circ \B_{c\f,u}\circ Q|_{\IM Q}\ = \ 0.
\]
From \eqref{E:indB=diag} we now obtain 
\begin{multline}\notag
	\ind_\tau \B_{c\f}\ = \ \ind \B_{c\f,1} \ = \ 
	\ind_\tau P\circ \B_{c\f,1}\circ P|_{\IM P}
	\\ = \ 
	\ind_\tau 
	  P^+\circ\big(i\frac{\p}{\p t}-ic\M_\f\big)\circ P^+\big|_{\IM P^+}
	\ + \ 
	\ind_\tau 
	  P^-\circ\big(-i\frac{\p}{\p t}-ic\M_\f\big)\circ P^-\big|_{\IM P^-}
	\\ = \ 
	\ind_\tau \big(\frac{\p}{\p t}-T_{f_t}^+\big)\big|_{L^2(S^1,\HH^+)}
	\ + \ 
	\ind_\tau \big(\frac{\p}{\p t}+T_{f_t}^-\big)\big|_{L^2(S^1,\HH^-)}.
\end{multline}
\end{proof}

\subsection{Proof of Theorem~\ref{T:sfToeplitz=Callias}}\label{SS:prsfToeplitz=Callias}
Theorem~\ref{T:sfToeplitz=Callias} follows now from \eqref{E:sf=ind}, \eqref{E:sf=-ind}, and \eqref{E:indddt=indTf}.\hfill$\square$

\section*{\textbf{Conflict of interest}}
The authors have no relevant financial or non-financial interests to disclose.

\section*{\textbf{Data Availability}}

Data sharing is not applicable to this article as no new data were created or analyzed in this study.
\begin{bibdiv}
\begin{biblist}

\bib{Anghel93Callias}{article}{
      author={Anghel, Nicolae},
       title={On the index of {C}allias-type operators},
        date={1993},
        ISSN={1016-443X},
     journal={Geom. Funct. Anal.},
      volume={3},
      number={5},
       pages={431\ndash 438},
         url={http://dx.doi.org/10.1007/BF01896237},
}

\bib{APS3}{article}{
      author={Atiyah, M.~F.},
      author={Patodi, V.~K.},
      author={Singer, I.~M.},
       title={Spectral asymmetry and {R}iemannian geometry. {III}},
        date={1976},
     journal={Math. Proc. Cambridge Philos. Soc.},
      volume={79},
      number={1},
       pages={71\ndash 99},
}

\bib{AtSinger69}{article}{
      author={Atiyah, M.~F.},
      author={Singer, I.~M.},
       title={Index theory for skew-adjoint {F}redholm operators},
        date={1969},
        ISSN={0073-8301},
     journal={Inst. Hautes \'Etudes Sci. Publ. Math.},
      number={37},
       pages={5\ndash 26},
         url={http://www.numdam.org/item?id=PMIHES_1969__37__5_0},
      review={\MR{0285033}},
}

\bib{BandaraGoffengRozen25}{article}{
      author={Bandara, Lashi},
      author={Goffeng, Magnus},
      author={Andreas, Rozen},
       title={Index theory for first-order operators on manifolds with
  {L}ipschitz boundary},
     journal={To appear},
}

\bib{BarBallmann12}{incollection}{
      author={B\"ar, Christian},
      author={Ballmann, Werner},
       title={Boundary value problems for elliptic differential operators of
  first order},
        date={2012},
   booktitle={Surveys in differential geometry. {V}ol. {XVII}},
      series={Surv. Differ. Geom.},
      volume={17},
   publisher={Int. Press, Boston, MA},
       pages={1\ndash 78},
         url={http://dx.doi.org/10.4310/SDG.2012.v17.n1.a1},
}

\bib{BourneCareyLeschRennie22}{article}{
      author={Bourne, Chris},
      author={Carey, Alan~L},
      author={Lesch, Matthias},
      author={Rennie, Adam},
       title={The {K}{O}-valued spectral flow for skew-adjoint fredholm
  operators},
        date={2022},
        ISSN={1793-5253},
     journal={Journal of Topology and Analysis},
      volume={14},
      number={2},
       pages={505\ndash 556},
}

\bib{Br19Toeplitz}{article}{
      author={Braverman, Maxim},
       title={Spectral flows of {T}oeplitz operators and bulk-edge
  correspondence},
        date={2019},
        ISSN={1573-0530},
     journal={Letters in Mathematical Physics},
      volume={109},
      number={10},
       pages={2271\ndash 2289},
         url={https://doi.org/10.1007/s11005-019-01187-7},
}

\bib{BrCecchini17}{article}{
      author={Braverman, Maxim},
      author={Cecchini, Simone},
       title={Callias-type operators in von {N}eumann algebras},
        date={2018},
        ISSN={1559-002X},
     journal={The Journal of Geometric Analysis},
      volume={28},
      number={1},
       pages={546\ndash 586},
         url={https://doi.org/10.1007/s12220-017-9832-1},
}

\bib{BrSaeedi24index}{article}{
	author = {Braverman, Maxim},
    author={Sadegh, Ahmad Reza Haj Saeedi},
	journal = {Annales math{\'e}matiques du Qu{\'e}bec, \ DOI: 10.1007/s40316-024-00228-5},
	pages = {arXiv:2403.13999},
	title = {On the $\mathbb{Z}_2$-valued index of elliptic odd symmetric operators on non-compact manifolds},
	year = {2024}}

\bib{BrSaeediYan25}{article}{
	author = {Braverman, Maxim},
    author={Sadegh, Ahmad Reza Haj~Saeedi},
    author={Yan, Junrong},
	journal = {In preparation.},
	title = {A gluing formula for the $\mathbb{Z}_2$-valued index if odd symmetric operators}}

\bib{BrShi21odd}{article}{
	author = {Braverman, Maxim},
        author={Shi, Pengshuai},
	journal = {J. Geom. Anal.},
	number = {4},
	pages = {3713--3763},
	title = {The {A}tiyah-{P}atodi-{S}inger index on manifolds with non-compact boundary},
	volume = {31},
	year = {2021}}

\bib{Bunke95}{article}{
      author={Bunke, Ulrich},
       title={A {$K$}-theoretic relative index theorem and {C}allias-type
  {D}irac operators},
        date={1995},
        ISSN={0025-5831},
     journal={Math. Ann.},
      volume={303},
      number={2},
       pages={241\ndash 279},
         url={http://dx.doi.org/10.1007/BF01460989},
      review={\MR{1348799 (96e:58148)}},
}

\bib{Bunke00}{incollection}{
      author={Bunke, Ulrich},
       title={On the index of equivariant {T}oeplitz operators},
        date={2000},
   booktitle={Lie theory and its applications in physics, {III} ({C}lausthal,
  1999)},
   publisher={World Sci. Publ., River Edge, NJ},
       pages={176\ndash 184},
      review={\MR{1888382}},
}

\bib{CareyPhillipsSB19}{article}{
      author={Carey, Alan~L.},
      author={Phillips, John},
      author={Schulz-Baldes, Hermann},
       title={Spectral flow for skew-adjoint {F}redholm operators},
        date={2019},
        ISSN={1664-039X,1664-0403},
     journal={J. Spectr. Theory},
      volume={9},
      number={1},
       pages={137\ndash 170},
         url={https://doi.org/10.4171/JST/243},
      review={\MR{3900782}},
}

\bib{Coburn73}{article}{
      author={Coburn, L.~A.},
       title={Singular integral operators and {T}oeplitz operators on odd
  spheres},
        date={1973/74},
        ISSN={0022-2518},
     journal={Indiana Univ. Math. J.},
      volume={23},
       pages={433\ndash 439},
         url={https://doi.org/10.1512/iumj.1973.23.23036},
      review={\MR{0322595}},
}

\bib{DeNittisGomi15}{article}{
      author={De~Nittis, Giuseppe},
      author={Gomi, Kiyonori},
       title={Classification of ``quaternionic" {B}loch-bundles: topological
  quantum systems of type {A}{I}{I}},
        date={2015},
        ISSN={0010-3616,1432-0916},
     journal={Comm. Math. Phys.},
      volume={339},
      number={1},
       pages={1\ndash 55},
         url={https://doi.org/10.1007/s00220-015-2390-0},
      review={\MR{3366050}},
}

\bib{DeNittisSB15}{article}{
      author={De~Nittis, Giuseppe},
      author={Schulz-Baldes, Hermann},
       title={Spectral flows of dilations of {F}redholm operators},
        date={2015},
        ISSN={0008-4395,1496-4287},
     journal={Canad. Math. Bull.},
      volume={58},
      number={1},
       pages={51\ndash 68},
         url={https://doi.org/10.4153/CMB-2014-055-3},
      review={\MR{3303207}},
}

\bib{DollSB21}{article}{
      author={Doll, Nora},
      author={Schulz-Baldes, Hermann},
       title={Skew localizer and {$\mathbb{Z}_2$}-flows for real index
  pairings},
        date={2021},
        ISSN={0001-8708,1090-2082},
     journal={Adv. Math.},
      volume={392},
       pages={Paper No. 108038, 42},
         url={https://doi.org/10.1016/j.aim.2021.108038},
      review={\MR{4322160}},
}

\bib{DonnellyFefferman83}{article}{
      author={Donnelly, H.},
      author={Fefferman, C.},
       title={{$L^{2}$}-cohomology and index theorem for the {B}ergman metric},
        date={1983},
        ISSN={0003-486X},
     journal={Ann. of Math. (2)},
      volume={118},
      number={3},
       pages={593\ndash 618},
         url={https://doi.org/10.2307/2006983},
}

\bib{Dupont69}{article}{
      author={Dupont, Johan~L.},
       title={Symplectic bundles and {$KR$}-theory},
        date={1969},
        ISSN={0025-5521,1903-1807},
     journal={Math. Scand.},
      volume={24},
       pages={27\ndash 30},
         url={https://doi.org/10.7146/math.scand.a-10918},
      review={\MR{254839}},
}

\bib{GrafPorta13}{article}{
      author={Graf, G.~M.},
      author={Porta, M.},
       title={Bulk-edge correspondence for two-dimensional topological
  insulators},
        date={2013},
        ISSN={0010-3616},
     journal={Comm. Math. Phys.},
      volume={324},
      number={3},
       pages={851\ndash 895},
         url={http://dx.doi.org/10.1007/s00220-013-1819-6},
      review={\MR{3123539}},
}

\bib{Hayashi17}{article}{
      author={Hayashi, Shin},
       title={Bulk-edge correspondence and the cobordism invariance of the
  index},
        date={2017},
     journal={Reviews in Mathematical Physics},
      volume={29},
      number={10},
       pages={1750033},
      eprint={https://doi.org/10.1142/S0129055X17500337},
         url={https://doi.org/10.1142/S0129055X17500337},
}

\bib{KleinMartin1952}{article}{
      author={Klein, Martin~J.},
       title={On a degeneracy theorem of {K}ramers},
    language={eng},
        date={1952},
        ISSN={0002-9505},
     journal={American journal of physics},
      volume={20},
      number={2},
       pages={65\ndash 71},
}

\bib{LawMic89}{book}{
      author={Lawson, H.~B.},
      author={Michelsohn, M.-L.},
       title={Spin geometry},
   publisher={Princeton University Press},
     address={Princeton, New Jersey},
        date={1989},
}

\bib{RobbinSalamon95}{article}{
      author={Robbin, Joel},
      author={Salamon, Dietmar},
       title={The spectral flow and the {M}aslov index},
        date={1995},
        ISSN={0024-6093},
     journal={Bull. London Math. Soc.},
      volume={27},
      number={1},
       pages={1\ndash 33},
         url={https://doi.org/10.1112/blms/27.1.1},
}

\bib{Schulz-Baldes15}{article}{
      author={Schulz-Baldes, Hermann},
       title={{$\mathbb Z_2$}-indices and factorization properties of odd
  symmetric {F}redholm operators},
        date={2015},
        ISSN={1431-0635,1431-0643},
     journal={Doc. Math.},
      volume={20},
       pages={1481\ndash 1500},
         url={https://doi.org/10.3934/dcdsb.2015.20.1031},
}

\bib{Stein72book}{book}{
      author={Stein, E.~M.},
       title={Boundary behavior of holomorphic functions of several complex
  variables},
   publisher={Princeton University Press, Princeton, N.J.; University of Tokyo
  Press, Tokyo},
        date={1972},
        note={Mathematical Notes, No. 11},
      review={\MR{0473215}},
}

\bib{DungenRonge21}{article}{
      author={van~den Dungen, Koen},
      author={Ronge, Lennart},
       title={The {APS}-index and the spectral flow},
        date={2021},
        ISSN={1846-3886,1848-9974},
     journal={Oper. Matrices},
      volume={15},
      number={4},
       pages={1393\ndash 1416},
         url={https://doi.org/10.7153/oam-2021-15-87},
      review={\MR{4364605}},
}

\end{biblist}
\end{bibdiv}

\end{document}